\newtheorem{theorem}{Theorem}[section]
\newtheorem{corollary}[theorem]{Corollary}
\newtheorem{definition}[theorem]{Definition}
\newtheorem{lemma}[theorem]{Lemma}
\newtheorem{proposition}[theorem]{Proposition}
\newtheorem{remark}[theorem]{Remark}
\newenvironment{proof}[1][Proof]{\noindent\textbf{#1.} }{\ \rule{0.5em}{0.5em}}
\numberwithin{equation}{section}
\begin{document}

\title{Matrices for the Weil Representation}
\author{Harold N. Ward \\
Department of Mathematics\\
University of Virginia\\
Charlottesville, VA 22904}
\maketitle

\section{Introduction}

These notes contain a revisit to the matrix construction of the Weil
representation presented in \cite{SP} and \cite{QRC}. Further consequences
of that framework are presented.

\section{Preliminaries and the symplectic algebra\label{SectPrelim}}

As in \cite{SP}, let $q$ be a power of the odd prime $p$, and let $V$ be a
vector space of even dimension $2n$ over the finite field $GF(q)$, endowed
with a nondegenerate symplectic form $\varphi $. The corresponding
symplectic group will be denoted $\mathrm{Sp}(V)$. Let $K$ be $\mathbb{Q}%
(e^{2\pi i/p})$, the field of $p$-th roots of unity. Denote the trace
function $GF(q)\rightarrow GF(p)$ by $\mathrm{tr}$: $\mathrm{tr}(\alpha
)=\alpha +\alpha ^{p}+\ldots +\alpha ^{q/p}$. Then let $\psi $ be the 
\textbf{canonical additive character} of $GF(q)$ given by $\psi (\alpha
)=e^{2\pi i/p\times \mathrm{tr}(\alpha )}$ \cite[p. 190]{LN}. Finally, let $%
f:V\times V\rightarrow K$ be the function $f(x,y)=\psi (\varphi (x,y))$.

\begin{lemma}
\label{Lemfproperties}\emph{\cite[Lemma 1.1]{SP} }For any $x,y,z$ in $V$ we
have

1) $f(x+y,z)=f(x,z)f(y,z).$

2) $f(x,y)^{m}=f(mx,y)=f(x,my)$. Notice that $f(\alpha x,y)=f(x,\alpha y)$
for $\alpha \in GF(q).$

3) $f(y,x)=f(x,y)^{-1}.$

4) $f$ is nondegenerate: if $f(x,y)=1$ for all $y\in V$, then $x=0$.
\end{lemma}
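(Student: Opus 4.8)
The plan is to reduce each of the four assertions to an elementary fact about the $GF(q)$-bilinear form $\varphi$ combined with the character property of $\psi$. Since $\psi(\alpha)=e^{2\pi i\,\mathrm{tr}(\alpha)/p}$ and $\mathrm{tr}\colon GF(q)\to GF(p)$ is additive, $\psi$ is a homomorphism from the additive group of $GF(q)$ into $K^{\times}$, so $\psi(\alpha+\beta)=\psi(\alpha)\psi(\beta)$ and $\psi(-\alpha)=\psi(\alpha)^{-1}$. Part 1) is then immediate from the bilinearity relation $\varphi(x+y,z)=\varphi(x,z)+\varphi(y,z)$. For part 2), I would iterate part 1) to obtain $f(x,y)^{m}=f(mx,y)=f(x,my)$ for integers $m$, and use the $GF(q)$-bilinearity $\varphi(\alpha x,y)=\alpha\varphi(x,y)=\varphi(x,\alpha y)$ for the scalar statement. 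Part 3) follows because a symplectic form is alternating, so $\varphi(y,x)=-\varphi(x,y)$ and hence $f(y,x)=\psi(-\varphi(x,y))=f(x,y)^{-1}$.

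The only part that needs a genuine argument is 4), because $\psi$ is not injective on $GF(q)$ — its kernel is exactly the set of trace-zero elements — so $f(x,y)=1$ does not by itself yield $\varphi(x,y)=0$. I would argue by contradiction: suppose $f(x,y)=1$ for every $y\in V$ but $\varphi(x,y_{0})\neq 0$ for some $y_{0}$, and set $\beta=\varphi(x,y_{0})\neq 0$. Then for each $\alpha\in GF(q)$ the vector $y=\alpha\beta^{-1}y_{0}$ satisfies $\varphi(x,y)=\alpha$, so $1=f(x,y)=\psi(\alpha)$, forcing $\mathrm{tr}(\alpha)=0$ for all $\alpha\in GF(q)$. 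This is impossible, since $\mathrm{tr}$ is given by the nonzero polynomial $X+X^{p}+\cdots+X^{q/p}$ of degree $q/p<q$, which cannot vanish at all $q$ elements of $GF(q)$. Hence $\varphi(x,y)=0$ for all $y$, and the nondegeneracy of $\varphi$ gives $x=0$.

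I expect the scalar-multiple manoeuvre in part 4) — passing from $f(x,\,\cdot\,)\equiv 1$ to $\varphi(x,\,\cdot\,)\equiv 0$ by exploiting $GF(q)$-linearity in the second variable, together with nontriviality of the trace — to be the only real step; the remaining parts are routine consequences of bilinearity and the fact that $\psi$ is an additive character.
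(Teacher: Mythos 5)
Your argument is correct in all four parts; note that the paper itself does not prove this lemma but cites it to \cite[Lemma 1.1]{SP}, so there is no in-text proof to compare against. Parts 1)--3) are the routine reductions to bilinearity, alternation, and the additivity of $\psi\circ\mathrm{tr}$, exactly as one would expect, and your treatment of part 4) --- using $GF(q)$-linearity in the second variable to show that $f(x,\,\cdot\,)\equiv 1$ would force $\mathrm{tr}$ to vanish identically, then ruling that out by the degree bound $q/p<q$ --- is a sound and standard way to establish nondegeneracy (equivalently, one could invoke the nondegeneracy of the trace form recorded in Remark \ref{RemFields}(1) to get nontriviality of $\psi$ directly).
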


\begin{definition}
\emph{The symplectic algebra of }$V$\emph{\ is the the twisted group ring }$%
A $\emph{\ of the additive group of }$V$\emph{\ over }$K$\emph{, with the
factor set }$f$\emph{. Thus }$A$\emph{\ is a }$K$\emph{-vector space with
basis indexed by the members of }$V$\emph{. The basis element corresponding
to }$x\in V$\emph{\ is denoted }$(x)$\emph{, and the multiplication rule is
that }$(x)(y)=f(x,y)(x+y)$\emph{.}
\end{definition}

\noindent The following properties of $\mathcal{A}$ are straight-forward;
some are in \cite[Proposition 1.2]{SP}.

\begin{lemma}
\label{LemAlgebraFacts}For the algebra $\mathcal{A}$ we have:

1) The identity is $(0).$

2) If $x\in V$, then $(x)$ is a unit and $(x)^{-1}=(-x)$.

3) If $x,y\in V$, then $(x)^{-1}(y)(x)=f(2y,x)(y)$.

4) The center of $\mathcal{A}$ is $K(0)$.

5) For $g\in \mathrm{Sp}(V)$, the map $\sum_{v\in V}\alpha
_{v}(v)\rightarrow \sum_{v\in V}\alpha _{v}(v^{g})$ is a $K$-automorphism of 
$\mathcal{A}$. It will written as $\mathfrak{a}\rightarrow \mathfrak{a}^{g}$.
\end{lemma}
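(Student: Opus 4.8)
The plan is to verify the five assertions by direct computation with the multiplication rule $(x)(y)=f(x,y)(x+y)$, using Lemma~\ref{Lemfproperties} throughout together with the fact that $\varphi$ is alternating, so that $f(x,x)=\psi(\varphi(x,x))=\psi(0)=1$. For (1) I would compute $(x)(0)=f(x,0)(x)$ and $(0)(x)=f(0,x)(x)$ and note $f(x,0)=f(0,x)=\psi(0)=1$, so $(0)$ is a two-sided identity. For (2) the key point is $f(x,-x)=f(x,x)^{-1}=1$ by Lemma~\ref{Lemfproperties}(2); then $(x)(-x)=f(x,-x)(0)=(0)$ and likewise $(-x)(x)=(0)$, so $(x)$ is a unit with $(x)^{-1}=(-x)$.

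For (3) I expand $(-x)(y)(x)$ in two steps: $(-x)(y)=f(-x,y)(y-x)$ and then $(y-x)(x)=f(y-x,x)(y)$. Using Lemma~\ref{Lemfproperties}(1),(2),(3) one simplifies $f(-x,y)=f(x,y)^{-1}$ and $f(y-x,x)=f(y,x)f(-x,x)=f(y,x)=f(x,y)^{-1}$, so the accumulated scalar is $f(x,y)^{-2}=f(y,x)^{2}=f(2y,x)$, which gives $(x)^{-1}(y)(x)=f(2y,x)(y)$.

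For (4) one inclusion is immediate: $K(0)$ is central since $(0)$ is the identity and $K$ is the ground field. For the converse, take $\mathfrak{a}=\sum_{v}\alpha_{v}(v)$ in the center. Conjugating by $(x)$ and applying (3) term by term, $\mathfrak{a}=(x)^{-1}\mathfrak{a}(x)=\sum_{v}\alpha_{v}f(2v,x)(v)$, so $\alpha_{v}\bigl(f(2v,x)-1\bigr)=0$ for every $v\in V$ and every $x\in V$. If some $\alpha_{v}\neq 0$ then $f(2v,x)=1$ for all $x$, and nondegeneracy of $f$ (Lemma~\ref{Lemfproperties}(4)) forces $2v=0$; since $p$ is odd this yields $v=0$. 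Hence $\mathfrak{a}\in K(0)$.

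For (5), the map $\mathfrak{a}\mapsto\mathfrak{a}^{g}$ permutes the basis $\{(v):v\in V\}$ according to $v\mapsto v^{g}$, so it is a $K$-linear bijection with inverse the corresponding map for $g^{-1}$; the only thing to check is multiplicativity, and by $K$-bilinearity of the product in $\mathcal{A}$ it suffices to check it on basis elements. There $(x)^{g}(y)^{g}=(x^{g})(y^{g})=f(x^{g},y^{g})\bigl((x+y)^{g}\bigr)$ because $g$ is linear, while $\bigl((x)(y)\bigr)^{g}=f(x,y)\bigl((x+y)^{g}\bigr)$, and these agree since $g\in\mathrm{Sp}(V)$ gives $\varphi(x^{g},y^{g})=\varphi(x,y)$, hence $f(x^{g},y^{g})=f(x,y)$. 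I do not expect a genuine obstacle; the only places needing care are the exponent bookkeeping in (3) and, in (4), remembering to invoke both nondegeneracy of $f$ and the oddness of $p$ to pass from $2v=0$ to $v=0$.
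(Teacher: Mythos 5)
Your proposal is correct and, for the only part the paper actually proves in detail (item 4), it takes the same route: conjugate $\sum_v \alpha_v(v)$ by $(x)$ using item 3, deduce $\alpha_v(f(2v,x)-1)=0$ for all $x,v$, and use nondegeneracy of $f$ together with the oddness of $p$ to conclude $v=0$ whenever $\alpha_v\neq 0$. Your explicit verifications of items 1, 2, 3, and 5 (including the $f(x^g,y^g)=f(x,y)$ step for the automorphism) are all sound; the paper simply declares those items straightforward.
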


\begin{proof}
Only item 4 needs proof: certainly $K(0)$ is in the center of $\mathcal{A}$,
by item 1. If $x\in V$, then 
\begin{equation*}
(x)^{-1}\left( \sum_{v\in V}\alpha _{y}(v)\right) (x)=\sum_{v\in
V}f(2v,x)\alpha _{y}(v).
\end{equation*}%
Thus $\sum_{v\in V}\alpha _{y}(v)$ is in the center exactly when $%
f(2v,x)\alpha _{v}=\alpha _{v}$ for all $x$ and $v$. But if $v\neq 0$, we
can choose $x$ to make $f(2v,x)\neq 1$, so that $\alpha _{v}$ must be 0.
\end{proof}

These are some general notations we shall use: as in \cite{QRC}, $\chi $ is
the quadratic character on $GF(q)$, and $\delta =\chi (-1)=(-1)^{(q-1)/2}$.
If $U$ is a subspace of $V$, $\mathcal{A}(U)$ is the $K$-subalgebra
generated by the $(u)$, $u\in U$. If $Y$ is a subspace of $X$, then $%
Y^{F}=\left\{ x|F(y,x)=0\text{ for all }y\in Y\right\} $ and $^{F}Y=\left\{
x|F(x,y)=0\text{ for all }y\in Y\right\} $. (However, we shall write $%
Y^{\perp }$ when $F=\varphi $.) The form $F$ is \textbf{nondegenerate} if $%
X^{F}=\left\{ 0\right\} $, and that is equivalent to $\det F\neq 0$. If $F$
is nondegenerate, we also have $^{F}X=\left\{ 0\right\} $. For $A$ an
additive group, $A^{\emptyset }$ is the set of nonzero members of $A$. If
the size of a square matrix needs indicating, it is put on as a subscript.
Finally, if $\mathfrak{m}$ is a linear transformation on a vector space $X$,
then $X^{\mathfrak{m}}=\func{Im}\mathfrak{m}$ and $X_{\mathfrak{m}}=\ker (%
\mathfrak{m}-1)$, the fixed-point subspace of $\mathfrak{m}$.

\begin{remark}
\label{RemFields}\emph{Here are some facts that will be needed in the sequel:%
}
\end{remark}

\begin{enumerate}
\item The trace form $(\alpha ,\beta )\rightarrow \mathrm{tr}(\alpha \beta )$
on $GF(q)$ is nondegenerate \cite[Theorem 5.7]{LN}. It follows from this
that if $X$ is a finite-dimensional vector space over $GF(q)$, then the
functions $x\rightarrow \psi (\mathrm{tr}(l(x)))$, where $l\in X^{\mathrm{lin%
}}$, the space of $GF(q)$-linear functionals on $X$, give the characters of
the additive group of $X$. The standard character summations are%
\begin{eqnarray*}
\sum_{x\in X}\psi (\mathrm{tr}(l(x))) &=&0\text{ if }l\neq 0\text{, and }%
\left\vert X\right\vert \text{ for }l=0; \\
\sum_{l\in X^{l\mathrm{in}}}\psi (\mathrm{tr}(l(x))) &=&0\text{ if }x\neq 0%
\text{, and }\left\vert X\right\vert \text{ for }x=0.
\end{eqnarray*}

\item Let 
\begin{equation*}
\rho =\sum_{\alpha \in GF(q)}\psi (\alpha ^{2})=\sum_{\xi \neq 0}\chi (\xi
)\psi (\xi ),
\end{equation*}%
as in \cite[Section 2]{QRC}. Then $\rho ^{2}=\delta q$. Moreover, for $%
\gamma \neq 0$, $\sum_{\alpha \in GF(q)}\psi (\gamma \alpha ^{2})=\chi
(\gamma )\rho $. If $Q$ is a quadratic form on $X$, then%
\begin{equation*}
\sum_{x\in X}\psi (Q(x))=\chi (Q)\rho ^{\mathrm{rank}Q}q^{\dim X-\mathrm{rank%
}Q}=\chi (Q)\left\vert X\right\vert \delta ^{\dim X-\mathrm{rank}Q}\rho ^{-%
\mathrm{rank}Q}.
\end{equation*}%
Here $\chi (Q)$ means $\chi (\det (Q^{\prime }))$, where $Q^{\prime }$ is
the quadratic form induced by $Q$ on $X/\mathrm{rad}Q$, and we take $\chi
(0)=1$. This classical result can be proved as follows: square the sum
defining $\rho $ to get $\rho ^{2}=\sum_{\alpha ,\beta \in GF(q)}\psi
(\alpha ^{2}+\beta ^{2})$. By \cite[Theorem 6.26]{LN}, the number of
solutions to $\zeta =\alpha ^{2}+\beta ^{2}$ is $q-\delta $ if $\zeta \neq 0$
and $q-\delta +q\delta $ if $\zeta =0$. Thus $\rho ^{2}=(q-\delta
)\sum_{\zeta \in GF(q)}\psi (\zeta )+q\delta \psi (0)$. As $\psi $ is a
nontrivial character, the sum is 0, so that $\rho ^{2}=q\delta \psi
(0)=q\delta $. For the second statement, $\sum_{\alpha \in GF(q)}\psi
(\gamma \alpha ^{2})=\sum_{\alpha \in GF(q)}\psi (\alpha ^{2})$ if $\gamma $
is a square. If not, $\sum_{\alpha \in GF(q)}\psi (\alpha ^{2})+\sum_{\alpha
\in GF(q)}\psi (\gamma \alpha ^{2})=2\sum_{\zeta \in GF(q)}\psi (\zeta )=0$,
making $\sum_{\alpha \in GF(q)}\psi (\gamma \alpha ^{2})=-\sum_{\alpha \in
GF(q)}\psi (\alpha ^{2})=\chi (\gamma )\rho $. The quadratic form result
comes from taking an orthogonal basis for $Q$ on $X$ and using the previous
sums.

\item Continuing, let $F(x)=Q(x)+l(x)$, where $l\in X^{\mathrm{lin}}$. We
wish to evaluate $\sum_{x}\psi (F(x))$. Write $X=X_{0}+\mathrm{rad}Q$ for
some subspace $X_{0}$ complementary to $\mathrm{rad}Q$, and suppose first
that $l$ is not identically $0$ on $\mathrm{rad}Q$. Putting $x=x_{0}+z$,
with $x_{0}\in X_{0}$ and $z\in \mathrm{rad}Q$, we have%
\begin{equation*}
\sum_{x}\psi (F(x))=\sum_{x}\psi (Q(x)+l(x))=\sum_{x_{0}}\psi
(Q(x_{0})+l(x_{0}))\sum_{z}\psi (l(z)).
\end{equation*}%
By item 1, the second sum is $0$, so that $\sum_{x}\psi (F(x))=0$. On the
other hand, if $l$ is $0$ on $\mathrm{rad}Q$, then%
\begin{equation*}
\sum_{x}\psi (Q(x)+l(x))=\left\vert \mathrm{rad}Q\right\vert
\sum_{x_{0}}\psi (Q(x_{0})+l(x_{0})).
\end{equation*}%
If $B$ is the polarization of $Q$, we can find $y_{0}\in X_{0}$ with $%
l(x)=2B(x,y_{0})$ for all $x$. Then $%
Q(x_{0})+l(x_{0})=Q(x_{0}+y_{0})-Q(y_{0})$. Thus%
\begin{eqnarray*}
\sum_{x_{0}}\psi (Q(x_{0})+l(x_{0})) &=&\psi (-Q(y_{0}))\sum_{x_{0}}\psi
(Q(x_{0}+y_{0})) \\
&=&\psi (-Q(y_{0}))\chi (Q)\left\vert X_{0}\right\vert \delta ^{\dim X_{0}-%
\mathrm{rank}Q}\rho ^{-\mathrm{rank}Q} \\
&=&\psi (-Q(y_{0}))\chi (Q)\left\vert X_{0}\right\vert \rho ^{-\mathrm{rank}%
Q}
\end{eqnarray*}%
since $\dim X_{0}=\mathrm{rank}Q$. Thus in this case%
\begin{eqnarray*}
\sum_{x}\psi (F(x)) &=&\left\vert \mathrm{rad}Q\right\vert \sum_{x_{0}}\psi
(Q(x_{0})+l(x_{0})) \\
&=&\psi (-Q(y_{0}))\chi (Q)\left\vert X\right\vert \rho ^{-\mathrm{rank}Q}.
\end{eqnarray*}
\end{enumerate}

These remarks imply the following:

\begin{lemma}
\label{LemQuadraticPsiSum}If $X$ is a finite-dimensional vector space over $%
GF(q)$, and $F(x)=Q(x)+l(x)$, where $Q$ is a quadratic form on $X$ and $l\in
X^{\mathrm{lin}}$ (that is, $F$ is a polynomial of degree at most 2 on $X$,
with $F(0)=0$), then $\left\vert \sum_{x}\psi (F(x))\right\vert \leq
\left\vert X\right\vert $. Equality holds if and only if $F$ is identically
0.
\end{lemma}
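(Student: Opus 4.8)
The plan is to simply read off the value of $\sum_{x}\psi (F(x))$ from the computation in Remark \ref{RemFields}, item 3, and then take absolute values. Recall that that computation branches on whether the linear part $l$ vanishes identically on $\mathrm{rad}\,Q$. If it does not, the sum is $0$, and since $\left\vert X\right\vert \geq 1$ the asserted inequality holds (strictly). If $l$ does vanish on $\mathrm{rad}\,Q$, then item 3 gives, for a suitable $y_{0}$,
\begin{equation*}
\sum_{x}\psi (F(x))=\psi (-Q(y_{0}))\,\chi (Q)\,\left\vert X\right\vert \,\rho ^{-\mathrm{rank}\,Q}.
\end{equation*}

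The next step is to compute the modulus of each factor on the right. Since $\psi $ takes values among the $p$-th roots of unity, $\left\vert \psi (-Q(y_{0}))\right\vert =1$; and $\chi (Q)\in \{\pm 1\}$ (with $\chi (0)=1$ by the stated convention), so $\left\vert \chi (Q)\right\vert =1$. From item 2, $\rho ^{2}=\delta q$ with $\delta =\pm 1$, so $\left\vert \rho \right\vert =\sqrt{q}$ and hence $\left\vert \rho ^{-\mathrm{rank}\,Q}\right\vert =q^{-\mathrm{rank}\,Q/2}$. Combining,
\begin{equation*}
\left\vert \sum_{x}\psi (F(x))\right\vert =\left\vert X\right\vert \,q^{-\mathrm{rank}\,Q/2}\leq \left\vert X\right\vert ,
\end{equation*}
which settles the inequality in both branches.

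For the equality clause I would argue as follows. In the branch where $l$ does not vanish on $\mathrm{rad}\,Q$ the sum is $0\neq \left\vert X\right\vert$, so equality fails. In the other branch, the displayed expression has modulus $\left\vert X\right\vert$ exactly when $\mathrm{rank}\,Q=0$, i.e. $Q=0$; but then $\mathrm{rad}\,Q=X$, and the standing assumption of this branch — that $l$ vanishes on $\mathrm{rad}\,Q$ — forces $l=0$, so $F\equiv 0$. Conversely, if $F\equiv 0$ then $\sum_{x}\psi (F(x))=\left\vert X\right\vert$. Thus equality holds precisely when $F$ is identically $0$.

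I do not anticipate a real obstacle here, since all the analytic content is already packaged in Remark \ref{RemFields}; the only care needed is to make the two branch hypotheses interlock so that ``equality $\Leftrightarrow F\equiv 0$'' comes out cleanly, and to check that the degenerate case $X=\{0\}$ (where $F\equiv 0$ automatically and $\sum_{x}\psi (F(x))=1=\left\vert X\right\vert$) is consistent with everything above.
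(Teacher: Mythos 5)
Your proposal is correct and follows exactly the route the paper intends: the paper states ``These remarks imply the following:'' and leaves the reader to extract the bound from Remark \ref{RemFields}, item 3, by taking absolute values, using $\left\vert \rho \right\vert =\sqrt{q}$, and noting that the two branch hypotheses (whether $l$ vanishes on $\mathrm{rad}\,Q$) together force $F\equiv 0$ in the equality case. Your handling of the degenerate case $X=\{0\}$ and of the interlocking hypotheses is careful and matches the intended argument.
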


\begin{corollary}
\label{CorPsiOne}If $\psi (F(x))=1$ for all $x$, then $F$ is identically 0.
\end{corollary}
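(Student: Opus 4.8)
The plan is to read this off directly from Lemma \ref{LemQuadraticPsiSum}. Suppose $\psi (F(x))=1$ for every $x\in X$. Summing this constant value over all $x$ gives $\sum_{x}\psi (F(x))=\left\vert X\right\vert$, hence $\left\vert \sum_{x}\psi (F(x))\right\vert =\left\vert X\right\vert$. This is exactly the case of equality in Lemma \ref{LemQuadraticPsiSum}, and that lemma's equality clause forces $F$ to be identically $0$. So the proof is essentially a single line; the only thing to keep in mind is the standing hypothesis carried over from the lemma, namely that $F=Q+l$ is a polynomial of degree at most $2$ with $F(0)=0$, so that Lemma \ref{LemQuadraticPsiSum} is genuinely applicable.

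If one wanted a self-contained argument not invoking the full quadratic character-sum computation of Remark \ref{RemFields}, one could instead observe that $\psi (F(x))=1$ is equivalent to $\mathrm{tr}(F(x))=0$ for all $x$; replacing $x$ by $\alpha x$ gives $\mathrm{tr}(\alpha ^{2}Q(x)+\alpha l(x))=0$ for all $\alpha \in GF(q)$ and all $x\in X$, and then separating the quadratic and linear parts (for instance by suitable sums over $\alpha$) together with nondegeneracy of the trace form (Remark \ref{RemFields}, item 1) yields $Q=0$ and $l=0$. But this is more laborious than the one-line deduction from Lemma \ref{LemQuadraticPsiSum}, which is surely the intended proof. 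The only real "obstacle" here is bookkeeping: confirming that the hypotheses of the lemma are in force so that its equality statement can be quoted verbatim.
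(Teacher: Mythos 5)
Your one-line deduction from Lemma \ref{LemQuadraticPsiSum} is exactly the intended argument: the paper states Corollary \ref{CorPsiOne} with no proof precisely because $\psi(F(x))=1$ for all $x$ forces $\sum_x\psi(F(x))=\lvert X\rvert$, which is the equality case of the lemma. Your check that the standing hypotheses ($F=Q+l$ of degree at most $2$ with $F(0)=0$) carry over is the right thing to note, and the alternative trace-form argument you sketch, while valid, is indeed more work than the paper's route.
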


\section{The symplectic group in $\mathcal{A}$\label{SectSpinA}}

In this section we produce the embedding of $\mathrm{Sp}(V)$ that is the
main ingredient in the treatment of the Weil representation in \cite{SP}.
However, several other useful results are proved in its development. First
we recall the \textbf{adjoint map} $\mathrm{ad}$ for $\varphi $: it is an
involutory antiautomorphism $\mathfrak{m}\rightarrow \mathfrak{m}^{\mathrm{ad%
}}$ of $\mathrm{End}_{GF(q)}(V)$ with the property that $\varphi (x^{%
\mathfrak{m}},y)=\varphi (x,y^{\mathfrak{m}^{\mathrm{ad}}})$ for all $x,y\in
V$ \cite[Section 36.3]{Go}. The members of $\mathrm{Sp}(V)$ are the
invertible $\mathfrak{m}$ for which $\mathfrak{m}^{\mathrm{ad}}=\mathfrak{m}%
^{-1}$.

\begin{lemma}
\label{LemPerp}If $\mathfrak{m}\in \mathrm{End}_{GF(q)}(V)$, then $\ker 
\mathfrak{m}=\func{Im}(\mathfrak{m}^{\mathrm{ad}})^{\perp }$.
\end{lemma}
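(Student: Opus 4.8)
The plan is to unwind the definitions of kernel, image, and orthogonal complement, and to use the defining adjunction property $\varphi(x^{\mathfrak{m}},y)=\varphi(x,y^{\mathfrak{m}^{\mathrm{ad}}})$ together with the nondegeneracy of $\varphi$ (item 4 of Lemma \ref{Lemfproperties}, equivalently $\det\varphi\neq 0$). First I would show the inclusion $\ker\mathfrak{m}\subseteq \func{Im}(\mathfrak{m}^{\mathrm{ad}})^{\perp}$: take $x\in\ker\mathfrak{m}$, so $x^{\mathfrak{m}}=0$; then for every $y\in V$ we have $\varphi(x,y^{\mathfrak{m}^{\mathrm{ad}}})=\varphi(x^{\mathfrak{m}},y)=\varphi(0,y)=0$, and since every element of $\func{Im}(\mathfrak{m}^{\mathrm{ad}})$ has the form $y^{\mathfrak{m}^{\mathrm{ad}}}$, this says exactly that $x\perp\func{Im}(\mathfrak{m}^{\mathrm{ad}})$.

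For the reverse inclusion, suppose $x\in\func{Im}(\mathfrak{m}^{\mathrm{ad}})^{\perp}$, i.e. $\varphi(x,y^{\mathfrak{m}^{\mathrm{ad}}})=0$ for all $y\in V$. By the adjunction property this reads $\varphi(x^{\mathfrak{m}},y)=0$ for all $y\in V$, and nondegeneracy of $\varphi$ forces $x^{\mathfrak{m}}=0$, i.e. $x\in\ker\mathfrak{m}$. This gives the two inclusions and hence equality. Alternatively — and perhaps more cleanly — one can observe that both sides have the same dimension: since $\mathrm{ad}$ is an (anti)automorphism of $\mathrm{End}_{GF(q)}(V)$, $\operatorname{rank}\mathfrak{m}^{\mathrm{ad}}=\operatorname{rank}\mathfrak{m}$, so $\dim\func{Im}(\mathfrak{m}^{\mathrm{ad}})^{\perp}=2n-\operatorname{rank}\mathfrak{m}^{\mathrm{ad}}=2n-\operatorname{rank}\mathfrak{m}=\dim\ker\mathfrak{m}$; then one inclusion suffices. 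I would probably just record the one-inclusion-plus-dimension-count version, or present both inclusions directly since each is a one-line computation.

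There is essentially no obstacle here: the only thing to be careful about is that $\mathfrak{m}^{\mathrm{ad}}$ is an anti\emph{auto}morphism of the \emph{ring} $\mathrm{End}_{GF(q)}(V)$ but in particular a $GF(q)$-linear bijection of the underlying space, so it preserves rank; and that the perp is taken with respect to the nondegenerate form $\varphi$, so that the rank-nullity bookkeeping $\dim U^{\perp}=2n-\dim U$ is valid. Both of these are standard facts recalled in the preliminaries, so the proof is short.
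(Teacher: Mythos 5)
Your two-inclusion argument is exactly the paper's chain of equivalences unrolled: the paper writes $x\in\func{Im}(\mathfrak{m}^{\mathrm{ad}})^{\perp}$ iff $\varphi(x,y^{\mathfrak{m}^{\mathrm{ad}}})=0$ for all $y$ iff $\varphi(x^{\mathfrak{m}},y)=0$ for all $y$ iff $x^{\mathfrak{m}}=0$ iff $x\in\ker\mathfrak{m}$, using nondegeneracy at the last step. Your proposal is correct and takes essentially the same approach; the dimension-count alternative you sketch is also fine but not needed.
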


\begin{proof}
The equality follows from this sequence of equivalent statements: $x\in 
\func{Im}(\mathfrak{m}^{\mathrm{ad}})^{\perp }$; $\varphi (x,y^{\mathfrak{m}%
^{\mathrm{ad}}})=0$ for all $y$; $\varphi (x^{\mathfrak{m}},y)=0$ for all $y$%
; $x^{\mathfrak{m}}=0$, from the nondegeneracy of $\varphi $; $x\in \ker 
\mathfrak{m}$.
\end{proof}

\noindent Applying this to $\mathfrak{m}=g-1$, $g\in \mathrm{Sp}(V)$, we have

\begin{corollary}
\label{CorPerp}If $g\in \mathrm{Sp}(V)$, then $V_{g}=(V^{g-1})^{\perp }$.
\end{corollary}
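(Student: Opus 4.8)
The plan is to derive Corollary \ref{CorPerp} directly from Lemma \ref{LemPerp} by substituting $\mathfrak{m} = g - 1$. The first step is to observe that the fixed-point subspace $V_g = \ker(g-1)$ by definition (as noted in the preliminaries, $X_{\mathfrak{m}} = \ker(\mathfrak{m}-1)$, so $V_g = V_{g-1+1}$... more directly $V_g$ is exactly $\ker(g-1)$). Then Lemma \ref{LemPerp} with $\mathfrak{m} = g-1$ gives $\ker(g-1) = \func{Im}((g-1)^{\mathrm{ad}})^{\perp}$, so it remains to identify $\func{Im}((g-1)^{\mathrm{ad}})$ with $V^{g-1} = \func{Im}(g-1)$.

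The key computation is therefore to show $\func{Im}((g-1)^{\mathrm{ad}}) = \func{Im}(g-1)$ when $g \in \mathrm{Sp}(V)$. Here I would use that $\mathrm{ad}$ is an antiautomorphism fixing scalars and that $g^{\mathrm{ad}} = g^{-1}$ for symplectic $g$: thus $(g-1)^{\mathrm{ad}} = g^{\mathrm{ad}} - 1 = g^{-1} - 1 = -g^{-1}(g - 1)$. Since $g^{-1}$ is invertible and scalar multiplication by $-1$ is invertible, $\func{Im}(-g^{-1}(g-1)) = \func{Im}(g-1)$ (precomposing the map $g-1$ with nothing and postcomposing with an isomorphism does not change the image). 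Hence $\func{Im}((g-1)^{\mathrm{ad}}) = V^{g-1}$.

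Combining, $V_g = \ker(g-1) = \func{Im}((g-1)^{\mathrm{ad}})^{\perp} = (V^{g-1})^{\perp}$, which is the claim. I do not anticipate any real obstacle here; the only point requiring a moment's care is the bookkeeping that $\mathrm{ad}$ is an \emph{anti}automorphism, but since $g-1$ involves only $g$ and the identity (which commute), the antiautomorphism property causes no trouble, and the identity $(g-1)^{\mathrm{ad}} = -g^{-1}(g-1)$ follows cleanly. One could alternatively avoid even this by noting $\func{Im}((g-1)^{\mathrm{ad}})^{\perp} = \func{Im}(g-1)^{\perp}$ would need the reverse argument, so the factorization route above is the cleanest.
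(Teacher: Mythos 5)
Your proposal is correct and matches the paper's (very terse) argument: the paper simply says ``applying Lemma~\ref{LemPerp} to $\mathfrak{m}=g-1$,'' and your computation $(g-1)^{\mathrm{ad}}=g^{-1}-1=-g^{-1}(g-1)$, hence $\func{Im}((g-1)^{\mathrm{ad}})=\func{Im}(g-1)=V^{g-1}$, is exactly the detail that makes that substitution work.
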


\begin{definition}
\label{DefTheta}\emph{Let }$g\in Sp(V)$\emph{. Following Thomas \cite{Th}
and Wall \cite{W}, we introduce the \textbf{theta form} }$\Theta _{g}$\emph{%
\ on }$V^{g-1}$\emph{, defined by}%
\begin{equation*}
\Theta _{g}(x^{g-1},y^{g-1})=\varphi (x^{g-1},y).
\end{equation*}
\end{definition}

\begin{lemma}
\label{LemTheta}\emph{\cite[Lemma 1.1.1 and equation (1.1.3)]{W}} The form $%
\Theta _{g}$ is well-defined and nondegenerate. Its skew-symmetric part is $%
-\varphi /2$ (on $V^{g-1}$), and its symmetric part $B_{g}$ is given by $%
B_{g}(x^{g-1},y^{g-1})=(\varphi (x^{g},y)+\varphi (y^{g},x))/2$. The
corresponding quadratic form $Q_{g}$ has $%
Q_{g}(x^{g-1})=B_{g}(x^{g-1},x^{g-1})=\varphi (x^{g},x)$.
\end{lemma}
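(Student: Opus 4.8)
The plan is to verify the three assertions in turn—well-definedness, nondegeneracy, and the decomposition into symmetric and skew parts—using only the definition $\Theta_g(x^{g-1},y^{g-1})=\varphi(x^{g-1},y)$ together with the basic identities for $\varphi$ and for the fixed-point space recorded in Corollary~\ref{CorPerp}. The bilinearity of $\Theta_g$ in each slot is immediate from the bilinearity of $\varphi$ once well-definedness is in hand, so I would not dwell on it.

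For well-definedness, I would argue that $\Theta_g$ does not depend on the choice of preimages $x,y$ of the arguments $x^{g-1},y^{g-1}$ under $g-1$. In the first slot this is automatic, since $x^{g-1}$ itself is the argument. In the second slot, suppose $y^{g-1}=z^{g-1}$, i.e.\ $y-z\in\ker(g-1)=V_g$. Then $\varphi(x^{g-1},y)-\varphi(x^{g-1},z)=\varphi(x^{g-1},y-z)$, and by Corollary~\ref{CorPerp} we have $V_g=(V^{g-1})^{\perp}$, so $x^{g-1}\in V^{g-1}$ is orthogonal to $y-z\in V_g$; hence the difference is $0$. That settles well-definedness. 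Nondegeneracy then follows by a dimension count combined with a direct kernel computation: if $x^{g-1}\in V^{g-1}$ satisfies $\Theta_g(x^{g-1},y^{g-1})=\varphi(x^{g-1},y)=0$ for all $y\in V$, then nondegeneracy of $\varphi$ forces $x^{g-1}=0$; so the left radical of $\Theta_g$ on $V^{g-1}$ is trivial, and since $\Theta_g$ is a bilinear form on the finite-dimensional space $V^{g-1}$ this already gives nondegeneracy.

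For the symmetric/skew decomposition, write $\Theta_g=B_g+S_g$ with $B_g$ the symmetric part and $S_g$ the skew part, so that $2S_g(u,v)=\Theta_g(u,v)-\Theta_g(v,u)$ and $2B_g(u,v)=\Theta_g(u,v)+\Theta_g(v,u)$. Taking $u=x^{g-1}$, $v=y^{g-1}$, I would compute $\Theta_g(x^{g-1},y^{g-1})-\Theta_g(y^{g-1},x^{g-1})=\varphi(x^{g-1},y)-\varphi(y^{g-1},x)$ and then expand $\varphi(x^{g-1},y)=\varphi(x^g,y)-\varphi(x,y)$ and similarly for the other term, using $\varphi(y^g,x)=-\varphi(x,y^g)$ and the symplectic (hence alternating) property of $\varphi$; after cancellation this should collapse to $-\varphi(x^{g-1},y^{g-1})$, giving $S_g=-\varphi/2$ on $V^{g-1}$. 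For $B_g$, the analogous sum gives $2B_g(x^{g-1},y^{g-1})=\varphi(x^{g-1},y)+\varphi(y^{g-1},x)=\varphi(x^g,y)+\varphi(y^g,x)$, where I would again expand and use that the $\varphi(x,y)$ and $-\varphi(x,y)$ terms cancel; specializing $y=x$ yields $Q_g(x^{g-1})=B_g(x^{g-1},x^{g-1})=\varphi(x^g,x)$.

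The only place requiring genuine care—rather than mechanical manipulation—is the well-definedness in the second argument, because it is exactly there that Corollary~\ref{CorPerp} (equivalently Lemma~\ref{LemPerp} applied to $g-1$, using $(g-1)^{\mathrm{ad}}=g^{-1}-1$ together with $\func{Im}(g^{-1}-1)=V^{g-1}$) is indispensable; everything else is bookkeeping with the alternating form $\varphi$. I would therefore present that step first and in full detail, and treat the decomposition computation as a short routine verification.
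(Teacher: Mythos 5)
Your proposal is correct and follows essentially the same route as the paper's proof: well-definedness from Corollary~\ref{CorPerp}, nondegeneracy from nondegeneracy of $\varphi$ on $V$, and the symmetric/skew decomposition by expanding $\varphi(x^{g-1},y)\pm\varphi(y^{g-1},x)$. The one small (and harmless) variation is that you kill the left radical of $\Theta_g$ directly via nondegeneracy of $\varphi$, whereas the paper kills the right radical using $(V^{g-1})^{\perp}=V_g$; either suffices on a finite-dimensional space.
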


\begin{proof}
If $y_{1}^{g-1}=y_{2}^{g-1}$, then $y_{1}-y_{2}\in V_{g}$, and $\varphi
(x^{g-1},y_{1}-y_{2})=0$, by Corollary \ref{CorPerp}. Thus $\varphi
(x^{g-1},y_{1})=\varphi (x^{g-1},y_{2})$, as needed to make $\Theta _{g}$
well-defined. Then if $\Theta _{g}(x^{g-1},y^{g-1})=\varphi (x^{g-1},y)=0$
for all $x$, $y$ must be in $(V^{g-1})^{\perp }$ and hence be a fixed vector
of $g$, whereupon $y^{g-1}=0$. This shows that $\det \Theta _{g}\neq 0$
(relative to any basis of $V^{g-1}$).

The skew-symmetric part of $\Theta _{g}$ has the values%
\begin{eqnarray*}
\frac{1}{2}(\varphi (x^{g-1},y)-\varphi (y^{g-1},x)) &=&\frac{1}{2}(\varphi
(x^{g},y)-\varphi (x,y)-\varphi (y^{g},x)+\varphi (y,x) \\
&=&-\varphi (x,y)+\frac{1}{2}(\varphi (x^{g},y)+\varphi (x,y^{g})).
\end{eqnarray*}%
But 
\begin{eqnarray*}
-\frac{1}{2}\varphi (x^{g-1},y^{g-1}) &=&-\frac{1}{2}(\varphi
(x^{g},y^{g})-\varphi (x,y^{g})-\varphi (x^{g},y)+\varphi (x,y)) \\
&=&-\frac{1}{2}(2\varphi (x,y)-\varphi (x^{g},y)-\varphi (x,y^{g})) \\
&=&-\varphi (x,y)+\frac{1}{2}(\varphi (x^{g},y)+\varphi (x,y^{g})),
\end{eqnarray*}%
the same thing.

For the symmetric part $B_{g}$ of $\Theta _{g}$, we have%
\begin{eqnarray*}
B_{g}(x^{g-1},y^{g-1}) &=&\frac{1}{2}(\varphi (x^{g-1},y)+\varphi
(y^{g-1},x)) \\
&=&\frac{1}{2}(\varphi (x^{g},y)-\varphi (x,y)+\varphi (y^{g},x)-\varphi
(y,x)) \\
&=&\frac{1}{2}(\varphi (x^{g},y)+\varphi (y^{g},x)).
\end{eqnarray*}
\end{proof}

\begin{theorem}
\label{ThmCharSubspace}\emph{\cite[Theorem 1.1.2]{W}} Let $U$ be a subspace
of $V$ endowed with a nondegenerate bilinear form $T$ whose skew-symmetric
part is $-\varphi /2$ on $U$. Then if $B=T+\varphi /2$ and $Q(u)=B(u,u)$, $%
u\in U$, there is an element $g\in \mathrm{Sp}(V)$ for which $U=V^{g-1}$ and 
$\Theta _{g}=T$. If we start with $h\in \mathrm{Sp}(V)$ and take $U=V^{h-1}$
and $T=\Theta _{h}$, then $g=h$. Consequently, the original $g$ is unique.
\end{theorem}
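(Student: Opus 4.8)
The plan is to construct $g$ explicitly from the pair $(U,T)$, using $\varphi$ to identify $V/U^{\perp}$ with the dual space $U^{\mathrm{lin}}$ and $T$ to identify $U$ with $U^{\mathrm{lin}}$. For $x\in V$ let $\bar x\in U^{\mathrm{lin}}$ be the functional $u\mapsto\varphi(u,x)$; since $\varphi$ is nondegenerate and $\dim U^{\perp}=2n-\dim U$, the map $x\mapsto\bar x$ has kernel exactly $U^{\perp}$ and is therefore onto $U^{\mathrm{lin}}$. Since $\det T\neq 0$, the map $\Psi\colon U\to U^{\mathrm{lin}}$ sending $v$ to the functional $u\mapsto T(u,v)$ is an isomorphism; set $\Phi=\Psi^{-1}$. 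Define $\mathfrak m\in\mathrm{End}_{GF(q)}(V)$ by $x^{\mathfrak m}=\Phi(\bar x)$, so that $\mathfrak m$ is the composite $V\twoheadrightarrow U^{\mathrm{lin}}\xrightarrow{\Phi}U\hookrightarrow V$; thus $\func{Im}\,\mathfrak m=U$ and $\ker\mathfrak m=U^{\perp}$. Evaluating the defining relation $\Psi(x^{\mathfrak m})=\bar x$ at $u=y^{\mathfrak m}\in U$ yields the key identity
\begin{equation*}
\varphi(x^{\mathfrak m},y)=T(x^{\mathfrak m},y^{\mathfrak m})\qquad\text{for all }x,y\in V,
\end{equation*}
which I shall call $(*)$.

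Next I would put $g=1+\mathfrak m$ and verify $g\in\mathrm{Sp}(V)$. Expanding $\varphi(x^{g},y^{g})=\varphi(x,y)+\varphi(x^{\mathfrak m},y)+\varphi(x,y^{\mathfrak m})+\varphi(x^{\mathfrak m},y^{\mathfrak m})$, I rewrite the second term by $(*)$, the third as $-\varphi(y^{\mathfrak m},x)=-T(y^{\mathfrak m},x^{\mathfrak m})$ (again $(*)$), and the fourth using the hypothesis that the skew-symmetric part of $T$ on $U$ is $-\varphi/2$, i.e. $\varphi(u,v)=T(v,u)-T(u,v)$ for $u,v\in U$; the three resulting terms cancel, so $\varphi(x^{g},y^{g})=\varphi(x,y)$. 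Invertibility of $g$ is immediate: $x^{g}=0$ forces $\varphi(x,y)=\varphi(x^{g},y^{g})=0$ for all $y$, hence $x=0$. So $g$ is an invertible isometry of $\varphi$, that is, $g\in\mathrm{Sp}(V)$. Now that $g$ is genuinely symplectic, $V^{g-1}=\func{Im}\,\mathfrak m=U$, and $\Theta_{g}(x^{g-1},y^{g-1})=\varphi(x^{\mathfrak m},y)=T(x^{\mathfrak m},y^{\mathfrak m})=T(x^{g-1},y^{g-1})$ by $(*)$, so $\Theta_{g}=T$. This settles the existence claim.

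For the second assertion, I would start with $h\in\mathrm{Sp}(V)$ and run the construction on $U=V^{h-1}$, $T=\Theta_{h}$. By Corollary \ref{CorPerp}, $\ker(h-1)=V_{h}=(V^{h-1})^{\perp}=U^{\perp}=\ker\mathfrak m$. Moreover, the definition of $\Theta_{h}$ reads $\Theta_{h}(z^{h-1},x^{h-1})=\varphi(z^{h-1},x)$, which is exactly $\Psi(x^{h-1})=\bar x$ for the $\Psi$ built from $T=\Theta_{h}$; since $x^{h-1}\in U$ and $\Psi$ is injective, $x^{h-1}=\Phi(\bar x)=x^{\mathfrak m}$ for every $x$, so $h-1=\mathfrak m$ and $h=g$. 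Uniqueness in the first assertion follows at once: if $g_{1},g_{2}\in\mathrm{Sp}(V)$ both satisfy $V^{g_{i}-1}=U$ and $\Theta_{g_{i}}=T$, then applying the previous sentence to each shows the construction on $(U,T)$ returns $g_{1}$, and also returns $g_{2}$; since that construction has a single output, $g_{1}=g_{2}$.

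I expect the one real subtlety to be the isometry check for $g$: it is precisely the normalization of the skew part of $T$ to $-\varphi/2$ that forces the three cross and internal terms to cancel, so this is where the hypothesis and the shape of the statement are used; everything else is bookkeeping with the two identifications. The forms $B=T+\varphi/2$ and $Q(u)=B(u,u)$ play no role beyond naming the symmetric part of $T$, and once $\Theta_{g}=T$ is established they coincide with the $B_{g}$, $Q_{g}$ of Lemma \ref{LemTheta}.
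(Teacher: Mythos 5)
Your proof is correct, and it takes a genuinely different route from the paper's. The paper constructs the element $s=\sum_{u\in U}\psi(Q(u))(u)$ of the symplectic algebra $\mathcal{A}$, computes $\widetilde{s}(v)s$ to extract the map $v\mapsto v+z(v)$, shows $s$ is invertible (hence $g$ nonsingular), and then deduces $g\in\mathrm{Sp}(V)$ indirectly: conjugation by $s$ is an algebra automorphism, so $f(v_1^g,v_2^g)=f(v_1,v_2)$, and Corollary~\ref{CorPsiOne} (a character-sum rigidity statement) upgrades this $\psi$-level identity to $\varphi(v_1^g,v_2^g)=\varphi(v_1,v_2)$. You instead construct $\mathfrak{m}$ directly as the composite of the two dualities determined by $\varphi$ and $T$ (in effect isolating the linear map $z(v)=v^{\mathfrak m}$ that the paper also produces, but without the algebra), and then verify the isometry condition for $g=1+\mathfrak m$ by a three-term cancellation using only the identity $(*)$ and the hypothesis on the skew part of $T$. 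This is more elementary and self-contained: no appeal to $\mathcal{A}$, to character sums, or to Corollary~\ref{CorPsiOne}. The one thing your route does not deliver, and the paper's does, is the explicit unit $s\in\mathcal{A}$ and the computation $\widetilde{s}s=|U|(0)$, which the proof of Corollary~\ref{Cors()Def} reads off from the body of the Theorem's proof (not merely its statement); a reader following your argument would need to supply that separate computation before the Corollary. Your treatment of the recovery step $h=g$ and of uniqueness is clean and matches the paper's in spirit (both exploit nondegeneracy of $\Theta_h$), though you phrase it as the construction being deterministic rather than as a direct comparison of $z(v)$ with $v^{h-1}$.
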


\begin{proof}
Let $s=\sum \psi (Q(u))(u)$ and $\widetilde{s}=\sum \psi (-Q(u))(u)$, both
sums over $u\in U$. For $v\in V$,%
\begin{equation*}
\widetilde{s}(v)s=\sum_{u_{1},u_{2}\in U}\psi (-Q(u_{1}))\psi
(Q(u_{2}))f(u_{1},v)f(u_{1}+v,u_{2})(u_{1}+u_{2}+v).
\end{equation*}%
Put $u_{2}=z-u_{1}$ and use $T(u_{1},z)=B(u_{1},z)-\varphi (u_{1},z)/2$ to
get%
\begin{equation*}
\widetilde{s}(v)s=\sum_{z\in U}\left\{ \psi (Q(z)+\varphi
(v,z))\sum_{u_{1}\in U}\psi (2(\varphi (u_{1},v)-T(u_{1},z)))\right\} (v+z).
\end{equation*}%
Now $u\rightarrow \varphi (u,v)-T(u,z)$ is a linear functional on $U$. So if
it is not identically 0, the inner sum (a character sum) \emph{is} 0. Since $%
T$ is nondegenerate, there is a unique $z=z(v)$ making $\varphi
(u,v)-T(u,z(v))=0$:%
\begin{equation}
T(u,z(v))=\varphi (u,v)\text{ for all }u\in U,v\in V.  \label{eqTphi}
\end{equation}%
The map $v\rightarrow z(v)$ is linear. Thus%
\begin{equation*}
\widetilde{s}(v)s=\left\vert U\right\vert \psi (Q(z(v)+\varphi
(v,z(v)))(v+z(v)).
\end{equation*}%
Moreover, $T(z(v),z(v))=\varphi (z(v),v)$. That is, $Q(z(v))+\varphi
(v,z(v))=0$, and $Q(z(v))=\varphi (z(v),v)$. So%
\begin{equation*}
\widetilde{s}(v)s=\left\vert U\right\vert (v+z(v)).
\end{equation*}%
From $v=0$ we get $\widetilde{s}s=\left\vert U\right\vert (0)$, so that $s$
is invertible and $s^{-1}=\left\vert U\right\vert ^{-1}\widetilde{s}$.
Consequently $g:v\rightarrow v+z(v)$ is nonsingular.

Since $\mathfrak{a}\rightarrow s^{-1}\mathfrak{a}s$ is an automorphism of $%
\mathcal{A}$, $f(v_{1}^{g},v_{2}^{g})=f(v_{1},v_{2})$ for all $v_{1},v_{2}$.
In Lemma \ref{LemQuadraticPsiSum}, let $X=V\oplus V$ and $%
F(v_{1},v_{2})=\varphi (v_{1}^{g},v_{2}^{g})-\varphi (v_{1},v_{2})$. Then $%
\psi (F(v_{1},v_{2})=1$. By Corollary \ref{CorPsiOne}, $F(v_{1},v_{2})=0$,
so that $\varphi (v_{1}^{g},v_{2}^{g})=\varphi (v_{1},v_{2})$, wherewith $%
g\in \mathrm{Sp}(V)$. As $z(v)=v^{g-1}$,%
\begin{equation*}
Q(v^{g-1})=Q(z(v))=\varphi (z(v),v)=\varphi (v^{g-1},v)=\varphi
(v^{g},v)=Q_{g}(v).
\end{equation*}%
Thus $Q=Q_{g}$, and so $B=B_{g}$ and $T=\Theta _{g}$. Certainly $%
V^{g-1}\subseteq U$, since $z(v)\in V^{g-1}$. But given $z\in U$, there is a 
$v$ with $T(z,u)=\varphi (u,v)$ for all $u\in U$, since $\varphi $ is
nonsingular. Then $z=z(v)$, and $U\subseteq V^{g-1}$. So $U=V^{g-1}$.

Finally, let $U=V^{h-1}$ and $T=\Theta _{h}$, for some $h\in \mathrm{Sp}(V)$%
. Then on the one hand, $\Theta _{h}(u,z(v))=\varphi (u,v)$ for all $u\in U$
and all $v\in V$, by (\ref{eqTphi}). On the other hand, $\Theta
_{h}(u,v^{h-1})=\varphi (u,v)$. Since $\Theta _{h}$ is nondegenerate, it
must be that $z(v)=v^{h-1}$, so that $v^{h}=v+z(v)=v^{g}$. Thus $g=h$.
\end{proof}

\begin{corollary}
\label{Cors()Def}\emph{\cite[Proposition 2.1]{SP} }For $g\in \mathrm{Sp}(V)$
there is a unique invertible element $s(g)\in \mathcal{A}$ with $(0)$%
-coefficient 1, for which $s(g)^{-1}(x)s(g)=(x^{g})$ for all $x\in V$. One
has 
\begin{equation}
s(g)=\sum_{y\in V^{g-1}}\psi (Q_{g}(y))(y).  \label{s(g)}
\end{equation}%
If $g,h\in \mathrm{Sp}(V)$, then 
\begin{equation}
s(g)s(h)=\mu (g,h)s(gh)  \label{mu}
\end{equation}%
for some nonzero $\mu (g,h)$ of $K$. (This factor is the inverse of the
\textquotedblleft $\mu (g,h)$\textquotedblright\ in \cite{SP}.) The map $%
g\rightarrow s(g)$ is injective, and%
\begin{equation}
s(g^{-1})=\sum_{y\in V^{g-1}}\psi (-Q_{g}(y))(y).  \label{sinv}
\end{equation}
\end{corollary}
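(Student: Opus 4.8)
The plan is to read everything off the proof of Theorem~\ref{ThmCharSubspace} applied with $U=V^{g-1}$ and $T=\Theta_{g}$ (so there $Q=Q_{g}$), together with the description of the center of $\mathcal{A}$ in Lemma~\ref{LemAlgebraFacts}. For existence and formula~(\ref{s(g)}) I would take $s=\sum_{y\in V^{g-1}}\psi(Q_{g}(y))(y)$, the very element $s$ of that proof. It is shown there that $\widetilde{s}\,s=\left\vert V^{g-1}\right\vert (0)$, so $s$ is invertible with $s^{-1}=\left\vert V^{g-1}\right\vert^{-1}\widetilde{s}$, and hence $s^{-1}(x)s=\left\vert V^{g-1}\right\vert^{-1}\widetilde{s}(x)s=(x^{g})$ for all $x\in V$. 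The $(0)$-coefficient of $s$ is $\psi(Q_{g}(0))=\psi(0)=1$. So $s(g):=s$ has the stated properties and satisfies~(\ref{s(g)}).

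For uniqueness I would suppose $t\in\mathcal{A}$ is invertible, has $(0)$-coefficient $1$, and satisfies $t^{-1}(x)t=(x^{g})$ for all $x$. Setting $w=s(g)t^{-1}$, one computes $w^{-1}(x)w=t\,s(g)^{-1}(x)\,s(g)\,t^{-1}=t\,(x^{g})\,t^{-1}=(x)$ for every $x$ (using $(x)=t(x^{g})t^{-1}$), so $w$ commutes with every $(x)$ and therefore lies in the center $K(0)$ by Lemma~\ref{LemAlgebraFacts}, item~4; comparing $(0)$-coefficients gives $w=(0)$, i.e.\ $t=s(g)$. The same centrality trick handles~(\ref{mu}): $s(g)s(h)$ is a unit with $(s(g)s(h))^{-1}(x)(s(g)s(h))=s(h)^{-1}(x^{g})s(h)=(x^{gh})$, so $s(g)s(h)\,s(gh)^{-1}$ commutes with every $(x)$, hence equals $\mu(g,h)(0)$ for some $\mu(g,h)\in K$, which is nonzero because the left-hand side is invertible.

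Injectivity is immediate: $s(g)=s(h)$ forces $(x^{g})=s(g)^{-1}(x)s(g)=s(h)^{-1}(x)s(h)=(x^{h})$ for all $x$, and since the $(v)$ form a basis of $\mathcal{A}$ indexed by distinct vectors, $x^{g}=x^{h}$ for all $x$, so $g=h$. For~(\ref{sinv}) I would not recompute $Q_{g^{-1}}$ on $V^{g^{-1}-1}$; instead I would observe that $\widetilde{s}=\sum_{y\in V^{g-1}}\psi(-Q_{g}(y))(y)=\left\vert V^{g-1}\right\vert s(g)^{-1}$ has $(0)$-coefficient $1$, is a unit, and—replacing $x$ by $x^{g^{-1}}$ in $s(g)^{-1}(x)s(g)=(x^{g})$—satisfies $\widetilde{s}^{-1}(x)\widetilde{s}=s(g)(x)s(g)^{-1}=(x^{g^{-1}})$ for all $x$. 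By the uniqueness just proved, $\widetilde{s}=s(g^{-1})$, which is exactly~(\ref{sinv}).

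There is no substantial obstacle here: all the analytic content sits in Theorem~\ref{ThmCharSubspace}, and the corollary is bookkeeping once one notices that ``conjugates every $(x)$ to $(x^{g})$'' pins $s(g)$ down up to the central factor $K(0)$. The only point requiring a little care is deriving~(\ref{sinv}) via this uniqueness statement rather than directly; the direct route would first need $V^{g^{-1}-1}=V^{g-1}$ (which follows from Corollary~\ref{CorPerp} since $V_{g^{-1}}=V_{g}$) and then $Q_{g^{-1}}=-Q_{g}$ on that subspace, which is more work than it is worth.
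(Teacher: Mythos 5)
Your proposal is correct and follows essentially the same route as the paper: existence and formula~(\ref{s(g)}) by applying Theorem~\ref{ThmCharSubspace} with $U=V^{g-1}$, $T=\Theta_g$; uniqueness, the scalar $\mu(g,h)$, and~(\ref{sinv}) all via the centrality argument of Lemma~\ref{LemAlgebraFacts}. The only surface difference is that you derive~(\ref{sinv}) by checking that $\widetilde{s}$ satisfies the characterizing conjugation property and invoking uniqueness, whereas the paper phrases it as $s(g^{-1})$ and $\widetilde{s}$ both being scalar multiples of $s(g)^{-1}$ and then records $Q_{g^{-1}}=-Q_g$ for completeness; these are the same argument.
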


\begin{proof}
The existence, invertibility, and formula for $s(g)$ follow from Theorem \ref%
{ThmCharSubspace}. If $s_{1}$ and $s_{2}$ are two members of $\mathcal{A}$
for which $s_{1}^{-1}(x)s_{1}=s_{2}^{-1}(x)s_{2}$ for all $x$, then $%
s_{1}s_{2}^{-1}$ commutes with all $(x)$, so that $s_{1}=\zeta s_{2}$ for
some nonzero $\zeta \in K$, by Lemma \ref{LemAlgebraFacts}. If the $(0)$%
-coefficients of $s_{1}$ and $s_{2}$ are both 1, then $\zeta =1$ and $%
s_{1}=s_{2}$, giving the uniqueness.

For $g,h\in \mathrm{Sp}(V)$,%
\begin{equation*}
s(h)^{-1}s(g)^{-1}(x)s(g)s(h)=(x^{gh})=s(gh)^{-1}(x)s(gh).
\end{equation*}%
Thus again $s(g)s(h)$ must be a scalar multiple of $s(gh)$, and that
multiple is defined to be $\mu (g,h)$. As the $(0)$-coefficient of $s(gh)$
is 1, we can find $\mu (g,h)$ by computing the $(0)$-coefficient in $%
s(g)s(h) $. Since $f(u,v)=f(-u,-v)$, we may write $s(h)=%
\sum_{y=z^{h}-z}f(z^{h},z)(-y) $. Then the $(0)$-coefficient in $s(g)s(h)$
is $\sum f(x^{g},x)f(z^{h},z)$, the sum over $y\in V^{g-1}\cap V^{h-1}$ with 
$y=x^{g}-x=z^{h}-z$. This coefficient is $\sum_{y\in V^{g-1}\cap
V^{h-1}}\psi (Q_{g}(y)+Q_{h}(y))$. Here $Q_{g}(y)+Q_{h}(y)$ is a quadratic
form on $V^{g-1}\cap V^{h-1}$, by Lemma \ref{LemTheta}, so that item 2 of
Remark \ref{RemFields} implies that $\mu (g,h)$ is a power of $\rho $, up to
sign.

That $g\rightarrow s(g)$ is injective just follows from the fact that $%
s(g)^{-1}(x)s(g)=(x^{g})$, which determines $g$. Since $s(g^{-1})s(g)$ and $%
\widetilde{s}s(g)$ are scalar multiples of $(0)$, where $\widetilde{s}=\sum_{%
\unit{y}\in V^{g-1}}\psi (-Q_{g}(u)(u)$ from the proof of Theorem \ref%
{ThmCharSubspace}, both $s(g^{-1})$ and $\widetilde{s}$ are scalar multiples
of $s(g)^{-1}$ and so of each other. As both have $(0)$-coefficient 1, it
must be that $s(g^{-1})=\widetilde{s}$. Note that 
\begin{equation*}
Q_{g^{-1}}(v)=\varphi (v^{g^{-1}},v)=\varphi (v,v^{g})=-\varphi
(v^{g},v)=-Q_{g}(v).
\end{equation*}
\end{proof}

\begin{proposition}
\label{PropInv}Let $g\in \mathrm{Sp}(V)$\ be an involution, with eigenspaces 
$E_{\varepsilon }=\left\{ v|v^{g}=\varepsilon v\right\} $, $\varepsilon =\pm
1$. Then $Q_{g}=0$ and $s(g)=\sum_{x\in E_{-1}}(x)$. If $g\in \mathrm{Sp}(V)$
and $Q_{g}=0$, then $g$ is an involution.
\end{proposition}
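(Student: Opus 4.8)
The plan is to prove the three claims in turn, leaning heavily on the formula $Q_g(v)=\varphi(v^g,v)$ from Lemma~\ref{LemTheta} and the characterization of $s(g)$ from Corollary~\ref{Cors()Def}.

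First I would establish $Q_g=0$ for an involution $g$. Recall $V^{g-1}=\func{Im}(g-1)$, and for any $x\in V$ write $y=x^{g-1}=x^g-x$. Then $Q_g(y)=\varphi(y^g,y)$, but since $g^2=1$ we have $y^g=(x^g-x)^g=x-x^g=-y$, so $Q_g(y)=\varphi(-y,y)=0$ by alternation of $\varphi$. Hence $Q_g$ vanishes identically on $V^{g-1}$. Plugging this into \eqref{s(g)} gives $s(g)=\sum_{y\in V^{g-1}}\psi(0)(y)=\sum_{y\in V^{g-1}}(y)$. It remains only to identify $V^{g-1}=E_{-1}$: clearly $\func{Im}(g-1)\subseteq E_{-1}$ since $(x^g-x)^g=-(x^g-x)$ as just computed, and conversely if $v\in E_{-1}$ then $v=\tfrac12(v-v^g)=\tfrac{-1}{2}\,v^{g-1}\in V^{g-1}$ (using that $p$ is odd, so $2$ is invertible). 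Thus $E_{-1}=V^{g-1}$ and $s(g)=\sum_{x\in E_{-1}}(x)$.

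For the converse, suppose $g\in\mathrm{Sp}(V)$ with $Q_g=0$. Then $\varphi(v^g,v)=0$ for all $v\in V$, i.e. $\varphi(v^{g-1},v)=0$ for all $v$. I would polarize this: replacing $v$ by $v+w$ and expanding, using bilinearity and alternation, yields $\varphi(v^{g-1},w)+\varphi(w^{g-1},v)=0$ for all $v,w$; equivalently $\varphi(v^{g-1},w)=-\varphi(w^{g-1},v)=\varphi(v,w^{g-1})$, using alternation again. On the other hand, since $g\in\mathrm{Sp}(V)$, $\varphi(v^g,w^g)=\varphi(v,w)$, which expands to $\varphi(v^{g-1},w)+\varphi(v,w^{g-1})+\varphi(v^{g-1},w^{g-1})=0$. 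Combining the two identities gives $2\varphi(v^{g-1},w)+\varphi(v^{g-1},w^{g-1})=0$, hence after substituting the first relation once more one gets $\varphi(v^{g-1},w^{g-1})=0$ for all $v,w$, i.e. $\varphi$ vanishes on $V^{g-1}\times V^{g-1}$. But $V^{g-1}=(V_g)^\perp$ by Corollary~\ref{CorPerp}, so $V^{g-1}$ is a totally isotropic subspace equal to its own perp-complement's perp; this forces $V^{g-1}\subseteq (V^{g-1})^\perp = V_g$, meaning $g$ acts trivially on $V^{g-1}=\func{Im}(g-1)$. A linear map with $(g-1)^2=0$... more precisely, $g$ fixing $\func{Im}(g-1)$ pointwise says exactly $(g-1)(g-1)=0$, so $g^2-2g+1=0$; combined with invertibility and $p$ odd this gives $g=g^{-1}$, i.e. $g$ is an involution (the minimal polynomial divides $(t-1)^2$ but $g$ is semisimple being in a finite group of order prime to... — alternatively just note $(g-1)^2=0$ plus $g$ having finite order coprime issues; cleanest: $(g-1)^2=0 \Rightarrow g^2=2g-1$, and iterating, $g^k = kg-(k-1)$, so taking $k$ the order of $g$ gives $1=kg-(k-1)$, forcing $g=1$ unless... hmm). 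I would instead argue directly: $(g-1)^2=0$ means $g-1$ is nilpotent, but $g$ lies in $\mathrm{Sp}(V)$ over a finite field of characteristic $p$, and $g$ has finite multiplicative order; if $g\ne 1$ then $g-1\ne0$ and $g$ unipotent of order a power of $p$, but then $g$ is not an involution unless... — this needs care, and indeed I expect this final identification to be the main obstacle: showing $(g-1)^2=0$ together with $g\ne1$ is incompatible with what we need, OR showing it directly forces $g^2=1$.

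To resolve that cleanly, here is the argument I would actually use: from $(g-1)^2=0$ we get $g^2-2g+1=0$, so $g^{-1}=2-g=1-(g-1)$. Then $g-g^{-1}=2(g-1)$. Now I claim $g-1=0$. Indeed, $Q_{g^{-1}}=-Q_g=0$ as well, so by the same polarization $\varphi$ also vanishes on $V^{g^{-1}-1}\times V^{g^{-1}-1}$; but $V^{g^{-1}-1}=\func{Im}(g^{-1}-1)=\func{Im}(1-(g-1)-1)=\func{Im}(g-1)=V^{g-1}$ anyway, so this gives nothing new. The honest fix: the relation $(g-1)^2=0$ shows $g$ is unipotent; write $g=1+N$ with $N$ nilpotent, $N^2=0$. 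Such an element has order $p$ (or $1$) in characteristic $p$, while an involution has order $1$ or $2$; since $p$ is odd, the only common possibility is order $1$, i.e. $g=1$, which is indeed an involution. Wait — but the proposition asserts $g$ IS an involution, and $g=1$ qualifies. So actually the statement is: $Q_g=0 \Rightarrow g$ is unipotent with $(g-1)^2=0$, hence $g=1$ in the odd-characteristic case, which is an involution (trivially). Hmm, but that would make the class of such $g$ trivial, contradicting the forward direction which allows $E_{-1}\ne0$. So I must have a sign error above — let me not commit to the details here and simply flag: the forward polarization computation must be redone carefully, and the key structural fact needed is that $g\in\mathrm{Sp}(V)$ with $\varphi$ vanishing on $V^{g-1}$ combined with the symplectic relation forces $(g-1)^2=0$ only up to a sign that actually yields $g^2=1$ directly; getting the signs right in this polarization is precisely the delicate point, after which the conclusion is immediate.
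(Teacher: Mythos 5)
Your forward direction reaches the right conclusion but misquotes the key identity. Lemma~\ref{LemTheta} gives $Q_g(x^{g-1})=\varphi(x^g,x)$; writing $y=x^{g-1}$, this equals $\varphi(y,x)$, \emph{not} $\varphi(y^g,y)$. (If $Q_g(y)=\varphi(y^g,y)$ were a valid formula, then $Q_h$ would vanish identically for \emph{every} transvection $h$, contradicting Proposition~\ref{PropTrans}.) To actually prove $Q_g=0$ for an involution you need the orthogonality $E_1\perp E_{-1}$: with $x=x_1+x_{-1}$ one has $\varphi(x^g,x)=\varphi(x_1-x_{-1},x_1+x_{-1})=2\varphi(x_1,x_{-1})=0$. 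That is precisely what the paper does, and it is the content your computation is missing. Your identification $V^{g-1}=E_{-1}$ is fine.

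The backward direction, as you yourself flag, does not work, and the difficulty is not a recoverable sign error. The conclusion you are trying to reach by polarization --- $\varphi(v^{g-1},w^{g-1})=0$, i.e.\ $V^{g-1}$ totally isotropic --- is \emph{false}: for $g=-1$ one has $Q_g=0$, $V^{g-1}=V$, and $\varphi$ nondegenerate. So the correct relation $2\varphi(v^{g-1},w)+\varphi(v^{g-1},w^{g-1})=0$ simply cannot yield it. The fallback via $(g-1)^2=0$ is likewise doomed: that would make $g$ unipotent, hence $g=1$ in odd characteristic, again contradicting $g=-1$. In fact $Q_g=0$ forces $\Theta_g=-\varphi/2$ on $V^{g-1}$ (Lemma~\ref{LemTheta}), and since $\Theta_g$ is nondegenerate, $\varphi$ is \emph{non}singular there --- the exact opposite of total isotropy.

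The paper's converse runs entirely through the algebra $\mathcal{A}$ and is much shorter: from $Q_g=0$ and~\eqref{s(g)}, $s(g)=\sum_{x\in V^{g-1}}(x)$, and from~\eqref{sinv}, $s(g^{-1})=\sum_{x\in V^{g-1}}\psi(-Q_g(x))(x)=\sum_{x\in V^{g-1}}(x)$ too. By the injectivity of $g\mapsto s(g)$ (Corollary~\ref{Cors()Def}), $g=g^{-1}$. You would do well to adopt this route rather than try to salvage the linear-algebraic one, which cannot succeed as stated.
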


\begin{proof}
We have $V=E_{1}\oplus E_{-1}$, an orthogonal direct sum. For $%
v=v_{1}+v_{-1} $, with $v_{\varepsilon }\in E_{\varepsilon }$, $%
v^{g-1}=2v_{-1}=-v_{-1}^{g-1}$. Thus $V^{g-1}=E_{-1}$, and $%
Q_{g}(v^{g-1})=\varphi (2v_{-1},v)=0$. Then $s(g)=\sum_{x\in E_{-1}}(x)$.

If $Q_{g}=0$ for some $g\in \mathrm{Sp}(V)$, then $\Theta _{g}=-\varphi /2$
on $V^{g-1}$, by Lemma \ref{LemTheta}, so that $\varphi $ is nonsingular on $%
V^{g-1}$, since $\Theta _{g}$ is. We have both $s(g)=\sum_{x\in V^{g-1}}(x)$
and $s(g^{-1})=\sum_{x\in V^{g-1}}(x)$, from Corollary \ref{Cors()Def}. Thus 
$g=g^{-1}$ and $g$ is an involution.
\end{proof}

\begin{proposition}
\label{PropTrans}Let $h_{\gamma }$\ be the transvection $v\rightarrow
v-\gamma ^{-1}\varphi (v,c)c$. Then $s(h_{\gamma })=\sum_{\zeta \in
GF(q)}\psi (\gamma \zeta ^{2})(\zeta c)$\emph{.}
\end{proposition}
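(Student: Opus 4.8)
The plan is to apply Corollary \ref{Cors()Def} directly: compute $V^{h_\gamma-1}$ and the quadratic form $Q_{h_\gamma}$, then substitute into equation (\ref{s(g)}). First I would observe that $v^{h_\gamma-1}=-\gamma^{-1}\varphi(v,c)c$, so the image $V^{h_\gamma-1}$ is exactly the line $GF(q)\,c = \{\zeta c : \zeta\in GF(q)\}$, as $\varphi(\cdot,c)$ is a nonzero functional (assuming $c\neq 0$, which is implicit). Thus the sum in (\ref{s(g)}) runs over $y=\zeta c$ for $\zeta\in GF(q)$.

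Next I would compute $Q_{h_\gamma}$ on this line. By Lemma \ref{LemTheta}, $Q_g(x^{g-1})=\varphi(x^g,x)$. Taking $x$ with $x^{h_\gamma-1}=\zeta c$: writing $\varphi(x,c)=\alpha$, we have $x^{h_\gamma}=x-\gamma^{-1}\alpha c$, and $\zeta c = -\gamma^{-1}\alpha c$, so $\alpha=-\gamma\zeta$. Then $\varphi(x^{h_\gamma},x)=\varphi(x-\gamma^{-1}\alpha c,\,x)=-\gamma^{-1}\alpha\,\varphi(c,x)=\gamma^{-1}\alpha^2=\gamma^{-1}(\gamma\zeta)^2=\gamma\zeta^2$. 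So $Q_{h_\gamma}(\zeta c)=\gamma\zeta^2$, and (\ref{s(g)}) gives $s(h_\gamma)=\sum_{\zeta\in GF(q)}\psi(\gamma\zeta^2)(\zeta c)$, as claimed.

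Alternatively, and perhaps more in the spirit of a clean writeup, I would verify the formula directly via the defining property in Corollary \ref{Cors()Def}: let $t=\sum_{\zeta}\psi(\gamma\zeta^2)(\zeta c)$, note its $(0)$-coefficient is $\psi(0)=1$, and check that $t^{-1}(x)t=(x^{h_\gamma})$ for all $x\in V$ — or equivalently that $(x)t = t\,(x^{h_\gamma})$. Expanding $(x)t=\sum_\zeta \psi(\gamma\zeta^2)f(x,\zeta c)(x+\zeta c)$ and $t(x^{h_\gamma})=\sum_\zeta \psi(\gamma\zeta^2)f(\zeta c,x^{h_\gamma})(\zeta c + x^{h_\gamma})$, one reindexes the second sum (replacing $\zeta c$ by $\zeta c + \gamma^{-1}\varphi(x,c)c$, i.e. shifting $\zeta$) so that the group-ring basis elements $(x+\zeta c)$ match, and then compares coefficients using Lemma \ref{Lemfproperties} and the fact that $f(x,\zeta c)=\psi(\zeta\varphi(x,c))$. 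The matching reduces to a quadratic identity in $\zeta$ that $\psi$ is evaluated on, which holds by completing the square; by uniqueness in Corollary \ref{Cors()Def} this identifies $t=s(h_\gamma)$.

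The main obstacle is bookkeeping rather than conceptual: one must correctly track the factor sets $f$ and the sign conventions (note $\varphi(v,c)$ versus $\varphi(c,v)$, and the $-\gamma^{-1}$ in the transvection formula) so that the exponent of $\psi$ collapses exactly to $\gamma\zeta^2$ with no stray linear or constant term. The first approach via Corollary \ref{Cors()Def} and Lemma \ref{LemTheta} is the shortest and essentially just requires the computation $Q_{h_\gamma}(\zeta c)=\gamma\zeta^2$, so I would present that, with the direct-verification route mentioned only as a remark if space permits. One should also note the degenerate case: if $c=0$ then $h_\gamma$ is the identity and both sides equal $(0)$, so the statement is trivially true there as well.
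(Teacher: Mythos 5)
Your primary approach is exactly the paper's: identify $V^{h_\gamma-1}=GF(q)c$, compute $Q_{h_\gamma}(\zeta c)=\gamma\zeta^2$ via Lemma \ref{LemTheta}, and substitute into formula (\ref{s(g)}) of Corollary \ref{Cors()Def}. The alternative direct-verification route you sketch is sound in outline but unnecessary and is not what the paper does.
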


\begin{proof}
Here $v^{h_{\gamma }-1}=-\gamma ^{-1}\varphi (v,c)c$, so that $V^{h_{\gamma
}-1}=GF(q)c$. Furthermore,%
\begin{equation*}
Q_{h_{\gamma }}(v^{h-1})=\varphi (v^{h_{\gamma }-1},v)=\varphi (-\gamma
^{-1}\varphi (v,c)c,v)=\gamma ^{-1}\varphi (v,c)^{2}.
\end{equation*}%
\ Thus with $v^{h_{\gamma }-1}=\zeta c$, $Q_{h_{\gamma }}(\zeta c)=\gamma
\zeta ^{2}$, and $s(h_{\gamma })=\sum_{\zeta }\psi (\gamma \zeta ^{2})(\zeta
c)$\emph{.}
\end{proof}

\begin{corollary}
\label{CorSpCentGen}\emph{\cite[Proposition 2.2]{SP}} The centralizer $%
\mathcal{C}$ of $j$ is the $K$-span of the $s(g)$, $g\in \mathrm{Sp}(V)$.
\end{corollary}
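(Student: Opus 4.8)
Write $\mathcal{S}$ for the $K$-span of $\{\,s(g):g\in\mathrm{Sp}(V)\,\}$, so the claim is $\mathcal{S}=\mathcal{C}$; I would prove the two inclusions separately. Recall that $j=\sum_{v\in V}(v)$; since the map $-1\colon v\mapsto -v$ lies in $\mathrm{Sp}(V)$ and is an involution with $E_{-1}=V$ (and $Q_{-1}=0$), Proposition \ref{PropInv} gives $j=s(-1)$. For $\mathcal{S}\subseteq\mathcal{C}$: by Lemma \ref{LemAlgebraFacts}(5) the automorphism $\mathfrak{a}\mapsto\mathfrak{a}^{g}$ merely permutes the basis, $(v)\mapsto(v^{g})$, so $j^{g}=\sum_{v}(v^{g})=j$; since $\mathfrak{a}\mapsto s(g)^{-1}\mathfrak{a}\,s(g)$ realizes this automorphism (Corollary \ref{Cors()Def}), $s(g)^{-1}j\,s(g)=j$, i.e.\ $s(g)\in\mathcal{C}$, and hence $\mathcal{S}\subseteq\mathcal{C}$ because $\mathcal{C}$ is a $K$-subspace.

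For the reverse inclusion I would first describe $\mathcal{C}$ explicitly. Conjugation by $j=s(-1)$ is the automorphism attached to $-1$, namely $(v)\mapsto(-v)$ (equivalently, $j(v)j^{-1}=(-v)$ follows from a direct character sum, using $j^{2}=q^{2n}(0)$). So $\sum_{v}\alpha_{v}(v)$ centralizes $j$ exactly when $\alpha_{v}=\alpha_{-v}$ for all $v$, whence $\mathcal{C}$ has $K$-basis $\{(0)\}\cup\{\,(w)+(-w):w\in V^{\emptyset}\,\}$ (in particular $\dim_{K}\mathcal{C}=(q^{2n}+1)/2$). It now suffices to exhibit each of these basis elements in $\mathcal{S}$. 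First, $(0)=s(1)\in\mathcal{S}$. For the others, fix $c\neq 0$; by Proposition \ref{PropTrans}, $s(h_{\gamma})=\sum_{\zeta\in GF(q)}\psi(\gamma\zeta^{2})(\zeta c)$ for $\gamma\in GF(q)^{\emptyset}$. Forming $\sum_{\gamma\in GF(q)^{\emptyset}}\psi(-\gamma\tau^{2})\,s(h_{\gamma})\in\mathcal{S}$ and collapsing the inner sums over $\gamma$ by the orthogonality relation $\sum_{\gamma\in GF(q)}\psi(\gamma\beta)=0$ ($\beta\neq 0$), one finds this element equals $q\,[(\tau c)+(-\tau c)]-\sum_{\zeta\in GF(q)}(\zeta c)$ when $\tau\neq 0$, and $q\,(0)-\sum_{\zeta\in GF(q)}(\zeta c)$ when $\tau=0$. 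The $\tau=0$ identity together with $(0)\in\mathcal{S}$ puts $\sum_{\zeta}(\zeta c)$ in $\mathcal{S}$; the $\tau\neq 0$ identity then puts $(\tau c)+(-\tau c)$ in $\mathcal{S}$ for every $\tau\neq 0$. Varying $c$ and $\tau$ produces all $(w)+(-w)$ with $w\in V^{\emptyset}$, so $\mathcal{C}\subseteq\mathcal{S}$, and therefore $\mathcal{C}=\mathcal{S}$.

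The two steps that require a little thought are identifying conjugation by $j$ with the involutory automorphism $(v)\mapsto(-v)$, and choosing the combination of the $s(h_{\gamma})$ that isolates $(\tau c)+(-\tau c)$; the rest is two routine character sums plus bookkeeping. A shorter but less self-contained route to $\mathcal{C}\subseteq\mathcal{S}$ would note that $\mathcal{A}\cong M_{q^{n}}(K)$ and that $\mathcal{S}$, as a homomorphic image of the semisimple twisted group algebra associated to the cocycle $\mu$ on $\mathrm{Sp}(V)$, is semisimple, then invoke the double centralizer theorem and the fact that the two constituents of the Weil representation are absolutely irreducible and inequivalent --- consistent with $(q^{2n}+1)/2=\big((q^{n}+1)/2\big)^{2}+\big((q^{n}-1)/2\big)^{2}$ --- but that would presuppose an irreducibility statement which the elementary argument above does not need.
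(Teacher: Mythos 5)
Your proof is correct, and it follows the same route as the paper, which simply records the single identity
\begin{equation*}
(c)+(-c)=(0)+q^{-1}\sum_{\gamma\neq 0}(\psi(-\gamma)-1)\,s(h_{\gamma})
\end{equation*}
and leaves the rest implicit. Your version spells out what the paper takes for granted: that conjugation by $j$ implements $(v)\mapsto(-v)$, so $\mathcal{C}$ is spanned by $(0)$ and the $(w)+(-w)$; that $\mathcal{S}\subseteq\mathcal{C}$ because each $s(g)$ conjugates $j=s(-1)$ to $j^{g}=j$; and that a suitable $K$-linear combination of the $s(h_{\gamma})$ along a fixed line $GF(q)c$ isolates $(\tau c)+(-\tau c)$. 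Your family of combinations $\sum_{\gamma\neq0}\psi(-\gamma\tau^{2})s(h_{\gamma})$ with the auxiliary $\tau=0$ case is a mild generalization of the paper's one fixed combination (which handles $\tau=1$ directly), but the underlying idea — invert a Gauss-type sum over transvections through a common center — is identical. One cosmetic point: you write $j=\sum_{v}(v)$, whereas the paper normalizes $j=q^{-n}\sum_{v}(v)$ so that $j^{2}=(0)$; this scalar is irrelevant to the centralizer, but your aside $j^{2}=q^{2n}(0)$ only holds for your unnormalized $j$.
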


\begin{proof}
As in \cite{SP}\emph{,}%
\begin{equation*}
(c)+(-c)=(0)+q^{-1}\sum_{\gamma \neq 0}(\psi (-\gamma )-1)s(h_{\gamma }).
\end{equation*}
\end{proof}

\begin{proposition}
\label{PropDirectSum}Let $h,k\in \mathrm{Sp}(V)$. If $V^{h-1}\cap
V^{k-1}=\left\{ 0\right\} $, that is, the sum $V^{h-1}+V^{k-1}$ is direct,
then $s(h)s(k)=s(hk)$ and $\mu (h,k)=1$. Conversely, if $\mu (h,k)=1$, then $%
V^{h-1}\cap V^{k-1}=\left\{ 0\right\} $. When this happens, $%
V^{hk-1}=V^{h-1}\oplus V^{k-1}$ and $\dim V^{hk-1}=\dim V^{h-1}+\dim V^{k-1}$%
. Moreover, $\Theta _{hk}(V^{h-1},V^{k-1})=0$, and with a basis for $%
V^{hk-1} $ adapted to the direct sum, the matrix $\left[ \Theta _{hk}\right] 
$ for $\Theta _{hk}$ is $\left[ 
\begin{array}{cc}
\left[ \Theta _{h}\right] & 0 \\ 
\ast & \left[ \Theta _{k}\right]%
\end{array}%
\right] $.
\end{proposition}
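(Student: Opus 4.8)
The plan is to work directly with the explicit formula \eqref{s(g)} for $s(h)$ and $s(k)$ and multiply the two sums in $\mathcal{A}$, exactly as in the proof of Theorem~\ref{ThmCharSubspace} and in the computation of $\mu(h,k)$ in Corollary~\ref{Cors()Def}. Writing
\[
s(h)s(k)=\sum_{x\in V^{h-1}}\sum_{y\in V^{k-1}}\psi(Q_h(x))\psi(Q_k(y))\,f(x,y)\,(x+y),
\]
I would first argue that the hypothesis $V^{h-1}\cap V^{k-1}=\{0\}$ forces the map $g\colon v\mapsto v^h+v^k-v$ to satisfy $V^{g-1}=V^{h-1}\oplus V^{k-1}$: indeed $v^{g-1}=v^{h-1}+v^{k-1}$, so $V^{g-1}\subseteq V^{h-1}+V^{k-1}$, and a dimension count using $\dim V^{g-1}=\dim V^{h-1}+\dim V^{k-1}$ (which follows once one knows the sum is direct and $g-1$ is injective on a complement) gives equality; one must check $g\in\mathrm{Sp}(V)$, most cleanly via the automorphism argument of Corollary~\ref{Cors()Def} (conjugation by $s(h)s(k)$ is an automorphism of $\mathcal{A}$ fixing $(0)$, hence equals conjugation by some $s(g)$, and reading off $x^g$ identifies $g$). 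This simultaneously shows $s(h)s(k)$ is a scalar multiple of $s(g)$, so it remains only to pin down the scalar as $1$ and to check $g=hk$.

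For the scalar, the cleanest route is to compute the $(0)$-coefficient of $s(h)s(k)$, which by the computation already done in Corollary~\ref{Cors()Def} is
\[
\sum_{y\in V^{h-1}\cap V^{k-1}}\psi\bigl(Q_h(y)+Q_k(y)\bigr).
\]
Under the hypothesis this index set is $\{0\}$, so the coefficient is $\psi(0)=1$; since $s(g)$ also has $(0)$-coefficient $1$, we get $s(h)s(k)=s(g)$ and hence $\mu(h,k)=1$. For the converse, if $\mu(h,k)=1$ then the $(0)$-coefficient of $s(hk)$, namely $1$, equals $\sum_{y\in V^{h-1}\cap V^{k-1}}\psi(Q_h(y)+Q_k(y))$; but $Q_h+Q_k$ is a quadratic form on the intersection (Lemma~\ref{LemTheta}), and by Lemma~\ref{LemQuadraticPsiSum} the modulus of such a $\psi$-sum equals the size of the space exactly when the form vanishes identically — yet even then one gets equality to $|V^{h-1}\cap V^{k-1}|$, which forces that number to be $1$, i.e. the intersection is $\{0\}$. (I would double-check that $Q_h+Q_k$ restricted to the intersection is not merely small but that the only way $\sum\psi=1$ with $|{\cdot}|$ terms of modulus $1$ each is for the space to be trivial — this is immediate from $|\sum\psi(\cdot)|\le|X|$ with equality iff the form is $0$, combined with the triangle inequality forcing all summands equal.) Once the intersection is trivial, $g=hk$ follows because $s(g)=s(h)s(k)=\mu(h,k)^{-1}s(h)s(k)\cdot\mu(h,k)=s(h)s(k)$ and $s(h)s(k)=\mu(h,k)s(hk)=s(hk)$ by \eqref{mu}, and $g\mapsto s(g)$ is injective (Corollary~\ref{Cors()Def}).

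It then follows that $V^{hk-1}=V^{g-1}=V^{h-1}\oplus V^{k-1}$, giving the dimension formula. For the statement about $\Theta_{hk}$, I would use the change of variable from the proof of Theorem~\ref{ThmCharSubspace}: for $v\in V$, $z(v)=v^{hk-1}=v^{h-1}+v^{k-1}$, and the defining relation $\Theta_{hk}(u,z(v))=\varphi(u,v)$ for all $u\in V^{hk-1}$ should be compared, for $u\in V^{h-1}$, with $\Theta_h(u,v^{h-1})=\varphi(u,v)$; subtracting gives $\Theta_{hk}(u,v^{k-1})=0$ for all $u\in V^{h-1}$, $v\in V$, i.e. $\Theta_{hk}(V^{h-1},V^{k-1})=0$, which is the asserted vanishing of the upper-right block. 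Restricting $\Theta_{hk}$ to $V^{h-1}\times V^{h-1}$ recovers $\Theta_h$ and similarly for $k$ (again by comparing the defining relations), so the diagonal blocks are $[\Theta_h]$ and $[\Theta_k]$; the lower-left block is unconstrained, yielding the claimed block-triangular shape. The main obstacle I anticipate is the bookkeeping in the converse direction — making fully rigorous that $\mu(h,k)=1$ forces the intersection to be zero rather than merely forcing $Q_h+Q_k$ to have some degeneracy — but Lemma~\ref{LemQuadraticPsiSum} is exactly tailored to close that gap, since it gives $|\sum_{y}\psi(Q_h(y)+Q_k(y))|=|V^{h-1}\cap V^{k-1}|$ only when $Q_h+Q_k\equiv 0$ there, and the value being the positive real number $1$ then forces $|V^{h-1}\cap V^{k-1}|=1$.
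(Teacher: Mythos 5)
There are three genuine gaps here, each at a place where the paper takes a different (and necessary) route.

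First, the auxiliary map $g\colon v\mapsto v^{h}+v^{k}-v$, i.e. $g=h+k-1$, is generally \emph{not} equal to $hk$: one has $hk=h+k-1$ exactly when $(h-1)(k-1)=0$, that is, when $V^{k-1}\subseteq V_{h}=(V^{h-1})^{\perp}$, which is not implied by $V^{h-1}\cap V^{k-1}=\{0\}$. Meanwhile the automorphism you invoke (conjugation by $s(h)s(k)$) fixes $(0)$ and sends $(x)$ to $(x^{hk})$, so the element it produces is $hk$, not your $g$; the dimension count you carry out is therefore about an unrelated linear map. The paper avoids this by reading off $V^{hk-1}$ directly from the support of the expansion of $s(h)s(k)$: under the direct-sum hypothesis the terms $(x+y)$, $x\in V^{h-1}$, $y\in V^{k-1}$, are distinct, none cancel, and the support of $s(hk)$ must therefore be all of $V^{h-1}+V^{k-1}$.

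Second, for the converse direction Lemma~\ref{LemQuadraticPsiSum} is not enough. It gives only $\bigl|\sum\psi(F)\bigr|\le|X|$ with equality iff $F\equiv 0$; it does not exclude a sum of modulus $1$ over a space with $|X|>1$, so ``$\sum=1$ forces $|X|=1$'' does not follow, and the parenthetical appeal to the triangle inequality equality case is off the mark (you are not in the equality case when $|\sum|=1<|X|$). The paper instead uses the exact Gauss-sum evaluation in Remark~\ref{RemFields}, item 2: the $(0)$-coefficient equals $\pm\rho^{r}q^{\dim X-r}=\pm\rho^{2\dim X-r}$ with $r\le\dim X$, which has modulus $q^{\dim X-r/2}\ge q^{(\dim X)/2}$, so this can be $1$ only when $\dim X=0$.

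Third, the $\Theta$-block argument is circular as sketched: subtracting $\Theta_{h}(u,v^{h-1})=\varphi(u,v)$ from $\Theta_{hk}(u,v^{hk-1})=\varphi(u,v)$ yields $\Theta_{hk}(u,v^{hk-1}-v^{h-1})=0$ only after one already knows $\Theta_{hk}(u,v^{h-1})=\Theta_{h}(u,v^{h-1})$, i.e. the diagonal-block equality $\Theta_{hk}\vert_{V^{h-1}}=\Theta_{h}$, which is itself part of the conclusion. The paper proves the off-diagonal vanishing directly: pick $x=u^{hk-1}$ with $u^{h(k-1)}=0$ (so $x=u^{h-1}$) and $y=v^{hk-1}$ with $v^{h-1}=0$ (so $v\in V_{h}$); then $\Theta_{hk}(x,y)=\varphi(u^{h-1},v)=0$ since $V_{h}=(V^{h-1})^{\perp}$. (A correct version of your idea is available: for $v\in V_{h}$, $v^{hk-1}=v^{k-1}$, and $\Theta_{hk}(u,v^{hk-1})=\varphi(u,v)=0$ for $u\in V^{h-1}$; one then checks $V_{h}^{k-1}=V^{k-1}$ under the direct-sum hypothesis. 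But that is not the subtraction you wrote.)
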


\begin{proof}
We have $v^{hk-1}=v^{h-1}+v^{h(k-1)}$, showing that $V^{hk-1}\subseteq $ $%
V^{h-1}+V^{k-1}$. Suppose that $V^{h-1}\cap V^{k-1}=\left\{ 0\right\} $.
Then in writing out the product $s(h)s(k)$, there is no collection of terms
with the same basis element $(v)$. Thus $s(hk)=s(h)s(k)$ and $\mu (h,k)=1$.
Each term in $s(h)s(k)$ appears as a product in only one way. So $%
V^{hk-1}=V^{h-1}+V^{k-1}$ and the sum must be direct. If $x=u^{hk-1}\in
V^{hk-1}$ and $x\in V^{h-1}$ too, then in the direct sum $u^{h(k-1)}=0$ and $%
x=u^{h-1}$. Likewise, if $y=v^{hk-1}$ and $y\in V^{k-1}$, then $v^{h-1}=0$,
so that $v\in V_{h}$. Thus%
\begin{equation*}
\Theta _{hk}(x,y)=\varphi (u^{hk-1},v)=\varphi (u^{h-1},v)=0,
\end{equation*}%
from the $\varphi $-orthogonality of $V^{h-1}$ and $V_{h}$.

With $x_{i}=u_{i}^{hk-1}$, where $u_{i}^{h(k-1)}=0$ and $x_{i}=u_{i}^{h-1}$,
we have 
\begin{equation*}
\Theta _{hk}(x_{1},x_{2})=\varphi (u_{1}^{hk-1},u_{2})=\varphi
(u_{1}^{h-1},u_{2})=\Theta _{h}(x_{1},x_{2}),
\end{equation*}%
making $\Theta _{hk}|V^{h-1}=\Theta _{h}$. Similarly, with $%
y_{i}=v_{i}^{hk-1}\in V^{k-1}$, where $v_{i}^{h-1}=0$ and $y_{i}=v_{i}^{k-1}$%
, 
\begin{equation*}
\Theta _{hk}(y_{1},y_{2})=\varphi (v_{1}^{hk-1},v_{2})=\varphi
(v_{i}^{k-1},v_{2})=\Theta _{k}(y_{1},y_{2}).
\end{equation*}%
Hence $\Theta _{hk}|V^{k-1}=\Theta _{k}$. All this implies that by taking a
basis for $V^{hk-1}$ that is the union of one in $V^{h-1}$ followed by one
in $V^{k-1}$, a matrix for $\Theta _{hk}$ can be arranged as displayed.

From the proof of Corollary \ref{Cors()Def}, the $(0)$-coefficient in $%
s(h)s(k)$ is 
\begin{equation*}
\sum_{x\in V^{h-1}\cap V^{k-1}}\psi (Q_{h}(x)+Q_{k}(x)).
\end{equation*}%
As observed there, $Q_{h}+Q_{k}$ is a quadratic form on $V^{h-1}\cap V^{k-1}$%
. If its rank is $r$, then $r\leq \dim (V^{h-1}\cap V^{k-1})$, and the sum,
up to sign, is $\rho ^{r}q^{\dim (V^{h-1}\cap V^{k-1})-r}=\pm \rho ^{2\dim
(V^{h-1}\cap V^{k-1})-r}$, by Remark \ref{RemFields}. That must be $\mu
(h,k) $, and the only way it can be 1 is that $\dim (V^{h-1}\cap V^{k-1})=0$.
\end{proof}

\begin{corollary}
\label{CorFactor}Suppose that $g\in \mathrm{Sp}(V)$ and that $%
V^{g-1}=X\oplus Y$, both terms nonzero. Suppose also that $\Theta
_{g}(X,Y)=0 $, so that $\Theta _{g}|X$ and $\Theta _{g}|Y$ are
nondegenerate. Let $h,k\in \mathrm{Sp}(V)$ with $X=V^{h-1},Y=V^{k-1}$, and $%
\Theta _{g}|X=\Theta _{h},\Theta _{g}|Y=\Theta _{k}$, following Theorem \ref%
{ThmCharSubspace}. Then $g=hk$.
\end{corollary}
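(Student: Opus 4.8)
The plan is to reduce Corollary~\ref{CorFactor} to an application of the uniqueness clause of Theorem~\ref{ThmCharSubspace}. Since $X = V^{h-1}$ and $Y = V^{k-1}$ satisfy $X \cap Y = \{0\}$ (the sum is direct), Proposition~\ref{PropDirectSum} applies directly: it gives $s(h)s(k) = s(hk)$, that $V^{hk-1} = V^{h-1} \oplus V^{k-1} = X \oplus Y = V^{g-1}$, that $\Theta_{hk}(V^{h-1},V^{k-1}) = 0$, and that $\Theta_{hk}$ restricts to $\Theta_h$ on $X$ and to $\Theta_k$ on $Y$.

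Next I would assemble the data $(V^{g-1}, \Theta_g)$ and $(V^{hk-1}, \Theta_{hk})$ and observe they coincide. By hypothesis $\Theta_g(X,Y) = 0$ and $\Theta_g|X = \Theta_h$, $\Theta_g|Y = \Theta_k$, while by the previous paragraph $\Theta_{hk}(X,Y) = 0$ and $\Theta_{hk}|X = \Theta_h$, $\Theta_{hk}|Y = \Theta_k$. A bilinear form on $X \oplus Y$ is determined by its restrictions to $X$ and $Y$ together with its values on the two "cross" blocks $X \times Y$ and $Y \times X$; both $\Theta_g$ and $\Theta_{hk}$ vanish on $X \times Y$. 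The one point requiring a word is the $Y \times X$ block: here I would invoke Lemma~\ref{LemTheta}, which says the skew-symmetric part of any theta form is $-\varphi/2$ on its domain, so the $Y\times X$ block of $\Theta_g$ is $-(\text{symmetric part on }Y\times X)\ \ominus\ $ hmm — more cleanly, since both forms have the same symmetric part (they have the same restrictions to $X$ and $Y$ and both vanish on $X\times Y$, and the symmetric part is symmetric) and the same skew-symmetric part $-\varphi/2$ on all of $V^{g-1} = V^{hk-1}$, they are equal. Thus $\Theta_g = \Theta_{hk}$ as forms on the common space $V^{g-1} = V^{hk-1}$.

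Finally, apply Theorem~\ref{ThmCharSubspace}: both $g$ and $hk$ are elements of $\mathrm{Sp}(V)$ with $V^{g-1} = V^{(hk)-1}$ and $\Theta_g = \Theta_{hk}$, and the theorem asserts that such an element is unique. Hence $g = hk$, as claimed. Alternatively, and perhaps more transparently, one can argue via $s(g)$: Corollary~\ref{Cors()Def} gives $s(g) = \sum_{y \in V^{g-1}} \psi(Q_g(y))(y)$ and likewise $s(hk) = \sum_{y \in V^{hk-1}} \psi(Q_{hk}(y))(y)$; since $V^{g-1} = V^{hk-1}$ and $\Theta_g = \Theta_{hk}$ forces $Q_g = Q_{hk}$, we get $s(g) = s(hk)$, and injectivity of $g \mapsto s(g)$ (also from Corollary~\ref{Cors()Def}) yields $g = hk$.

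I expect the only real obstacle to be the bookkeeping verification that $\Theta_g = \Theta_{hk}$ — specifically pinning down the off-diagonal $Y \times X$ block. This is handled cleanly by decomposing both forms into symmetric and skew-symmetric parts: the skew parts agree because each equals $-\varphi/2$ on the (common) domain by Lemma~\ref{LemTheta}, and the symmetric parts agree because a symmetric form on $X \oplus Y$ that vanishes on $X \times Y$ is determined by its restrictions to $X$ and $Y$, which match by hypothesis and by Proposition~\ref{PropDirectSum}. Everything else is a direct citation of the two results already in hand.
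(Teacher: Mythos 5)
Your proof is correct in outline and takes a genuinely different route from the paper's. The paper proceeds by direct computation in the algebra: expand $s(h)s(k)=\sum_{x\in X,y\in Y}\psi(Q_h(x)+Q_k(y)+\varphi(x,y))(x+y)$, use $\Theta_g=B_g-\varphi/2$ and $\Theta_g(X,Y)=0$ to collapse the argument of $\psi$ to $Q_g(x+y)$, conclude $s(h)s(k)=s(g)$, and finish by $s(h)s(k)=s(hk)$ and injectivity of $s$. You instead match the pairs $(V^{g-1},\Theta_g)$ and $(V^{hk-1},\Theta_{hk})$ and invoke the uniqueness clause of Theorem~\ref{ThmCharSubspace}; that is a clean conceptual shortcut which buys you the result without ever touching the factor set $f(x,y)$.

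One local slip needs patching. You assert ``the symmetric parts agree because a symmetric form on $X\oplus Y$ that vanishes on $X\times Y$ is determined by its restrictions to $X$ and $Y$.'' But the symmetric parts $B_g$, $B_{hk}$ do \emph{not} vanish on $X\times Y$: from $\Theta_g(x,y)=0$ and skew part $-\varphi/2$ one gets $\Theta_g(y,x)=\varphi(x,y)$, hence $B_g(x,y)=\tfrac{1}{2}\varphi(x,y)$, and likewise $B_{hk}(x,y)=\tfrac{1}{2}\varphi(x,y)$. So the symmetric parts agree on $X\times Y$ (both equal $\varphi/2$ there), not because they vanish, and they agree on $X\times X$ and $Y\times Y$ by the hypotheses and Proposition~\ref{PropDirectSum}. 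Equivalently, and a bit more directly: $\Theta_g$ and $\Theta_{hk}$ both vanish on $X\times Y$, agree on $X\times X$ and $Y\times Y$, and have the same skew-symmetric part $-\varphi/2$, so on $Y\times X$ one has $\Theta(y,x)=\Theta(x,y)+\varphi(x,y)=\varphi(x,y)$ for both, and they are equal. With that fix the appeal to the uniqueness statement of Theorem~\ref{ThmCharSubspace} (or, as you note, to $Q_g=Q_{hk}$ plus injectivity of $g\mapsto s(g)$) completes the argument.
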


\begin{proof}
We have $s(h)=\sum_{x\in X}\psi (Q_{h}(x))(x)$ and $s(k)=\sum_{y\in Y}\psi
(Q_{k}(y))(y)$. Thus%
\begin{eqnarray*}
s(h)s(k) &=&\sum_{x\in X,y\in Y}\psi (Q_{h}(x))\psi (Q_{k}(y))\psi
(f(x,y))(x+y) \\
&=&\sum_{x\in X,y\in Y}\psi (Q_{h}(x)+Q_{k}(y)+\varphi (x,y))(x+y).
\end{eqnarray*}%
The argument of $\psi $ here is 
\begin{equation*}
Q_{g}(x)+Q_{g}(y)+\varphi (x,y)=Q_{g}(x+y)-2B_{g}(x,y)+\varphi (x,y),
\end{equation*}%
and since $\Theta _{g}=B_{g}-\varphi /2$, it is $Q_{g}(x+y)-2\Theta
_{g}(x,y)=Q_{g}(x+y)$. Thus $s(h)s(k)=\sum_{x\in X,y\in Y}\psi
(Q_{g}(x+y))=s(g)$. Since $s(h)s(k)=s(hk)$, from $V^{h-1}\cap
V^{k-1}=\left\{ 0\right\} $, $g=hk$.
\end{proof}

\begin{remark}
\label{RemDirSum}\emph{By the arguments for }$s(g)$\emph{\ we have been
using, if }$g=g_{1}\cdots g_{r}$\emph{\ and }$\dim
V^{g-1}=\sum_{i=1}^{r}\dim V^{g_{i}-1}$\emph{, then }$s(g)=s(g_{1})\cdots
s(g_{r})$\emph{\ and }$V^{g-1}=V^{g_{1}-1}\oplus \cdots \oplus V^{g_{r}-1}$%
\emph{. Moreover, }$\left[ \Theta _{g}\right] $ \emph{is lower
block-triangular, with }$\left[ \Theta _{g_{1}}\right] ,\ldots ,\left[
\Theta _{g_{r}}\right] $ \emph{on the diagonal.}
\end{remark}

\section{$\mathcal{A}$ as a matrix ring\label{SectAMatrixRing}}

For any commutative ring $S$ we let $\mathcal{M}_{m}(S)$ be the $m\times m$
matrix ring over $S$. The goal of this section is to show that the
symplectic algebra $\mathcal{A}$ is isomorphic to $\mathcal{M}_{q^{n}}(K)$.
We prove this by creating a set of matrix units that will be of use later
on. The flow of the argument is quite standard.

Let $W$ and $W^{\ast }$ be two maximal isotropic (\textbf{Lagrangian})
subspaces of $V$ that are complementary, so that $V=W\oplus W^{\ast }$, an
internal direct sum. Then $\varphi $ is identically $0$ on both $W$ and $%
W^{\ast }$, and $\varphi $ sets up a nondegenerate pairing between $W$ and $%
W^{\ast }$. The subalgebras $\mathcal{A}(W)$ and $\mathcal{A}(W^{\ast })$
are simply the group algebras for the two subspaces as additive groups.
Moreover, the characters of the additive group of $W$ are produced by the
maps $w\rightarrow f(w,a)$, $a\in W^{\ast }$. Set $e_{0}=q^{-n}\sum_{w\in
W}(w)$. If $x\in V$, then

\begin{eqnarray*}
e_{0}(x)e_{0} &=&q^{-2n}\sum_{w,w^{\prime }\in W}(w)(x)(w^{\prime }) \\
&=&q^{-2n}\sum_{w,w^{\prime }\in W}f(w-w^{\prime },x)(x+w+w^{\prime }) \\
&=&q^{-2n}\sum_{y,y^{\prime }\in W}f(y,x)(x+y^{\prime }).
\end{eqnarray*}%
As $w\rightarrow f(w,x)$ is a character of $W$, $\sum_{w\in W}f(w,x)=q^{n}$
if $x\in W$, and $\sum_{w\in W}f(w,x)=0$ otherwise. Thus 
\begin{equation}
e_{0}(x)e_{0}=\left\{ 
\begin{tabular}{l}
$e_{0}$ if $x\in W$ \\ 
$0$ if $x\notin W.$%
\end{tabular}%
\right.  \label{e0(x)e0}
\end{equation}%
\newline

Now let $e_{ab}=(-a)e_{0}(b)$, where $a,b\in W^{\ast }$ (we may separate the
two subscripts by a comma when more complicated expressions appear). Then $%
e_{ab}e_{cd}=(-a)e_{0}(b-c)e_{0}(d)$, and by (\ref{e0(x)e0}), this is 0 if $%
b\neq c$, and $e_{ad}$ if $b=c$. Thus the $e_{ab}$ multiply like matrix
units. As customary, put $e_{a}=e_{aa}$, so that $e_{0}$ retains its
original meaning. These $e_{a}$ are orthogonal idempotents in $\mathcal{A(W)}
$. By item 3 of Lemma \ref{LemAlgebraFacts}, $e_{a}=q^{-n}\sum_{w\in
W}f(w,2a)(w)$. Then%
\begin{equation*}
\sum_{a\in W^{\ast }}e_{a}=q^{-n}\sum_{w\in W}\left( \sum_{a\in W^{\ast
}}f(w,2a\right) (w).
\end{equation*}%
The inner sum is 0 if $w\neq 0$, and $q^{n}$ if $w=0$. So $\sum_{a\in
W^{\ast }}e_{a}=(0)$. Furthermore,%
\begin{eqnarray*}
e_{a}(x)e_{b} &=&(-a)e_{0}(a)(x)(-b)e_{0}(b) \\
&=&f(a,x)f(a+x,-b)(-a)e_{0}(a+x-b)e_{0}(b) \\
&=&f(a+b,x)(-a)e_{0}(a+x-b)e_{0}(b).
\end{eqnarray*}%
Thus%
\begin{equation}
e_{a}(x)e_{b}=\left\{ 
\begin{tabular}{l}
$0$ if $a+x-b\notin W$ \\ 
$f(a+b,x)e_{ab}$ if $a+x-b\in W.$%
\end{tabular}%
\right. .  \label{ea(x)eb}
\end{equation}%
Hence $e_{a}(x)e_{b}\in Ke_{ab}$ for all $x$, so that $e_{a}\mathcal{A}%
e_{b}=Ke_{ab}$. Then for $\mathfrak{a}\in \mathcal{A}$,%
\begin{equation*}
\mathfrak{a}=\left( \sum_{a\in W^{\ast }}e_{a}\right) \mathfrak{a}\left(
\sum_{b\in W^{\ast }}e_{b}\right) =\sum_{a,b\in W^{\ast }}e_{a}\mathfrak{a}%
e_{b}=\sum_{a,b\in W^{\ast }}\alpha _{ab}e_{ab},
\end{equation*}%
with the $\alpha _{ab}\in K$. As the number of the $e_{ab}$ is $q^{2n}$,
which is $\dim _{K}\mathcal{A}$, the $e_{ab}$ form a $K$-basis of $\mathcal{A%
}$. Thus we have

\begin{theorem}
\label{ThmMatrix}The algebra $\mathcal{A}$ is indeed isomorphic to $\mathcal{%
M}_{q^{n}}(K)$.
\end{theorem}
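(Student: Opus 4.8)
The plan is to exhibit an explicit $K$-algebra isomorphism $\Phi\colon \mathcal{A}\to\mathcal{M}_{q^{n}}(K)$ built directly from the matrix units $e_{ab}$ just constructed. Since $W^{\ast}$ is a Lagrangian subspace of the $2n$-dimensional space $V$, we have $\dim_{GF(q)}W^{\ast}=n$ and hence $\left\vert W^{\ast}\right\vert =q^{n}$; fix a bijection between $W^{\ast}$ and $\{1,\ldots ,q^{n}\}$ so that the standard matrix units $E_{ab}$ of $\mathcal{M}_{q^{n}}(K)$ may be indexed by pairs $(a,b)$ with $a,b\in W^{\ast}$. Define $\Phi$ to be the unique $K$-linear map with $\Phi(e_{ab})=E_{ab}$ for all $a,b\in W^{\ast}$; this is legitimate because the $e_{ab}$ form a $K$-basis of $\mathcal{A}$, as established in the paragraphs above.

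Next I would check that $\Phi$ is an isomorphism of $K$-algebras. It is a $K$-linear bijection because it carries the basis $\{e_{ab}\}$ of $\mathcal{A}$ onto the basis $\{E_{ab}\}$ of $\mathcal{M}_{q^{n}}(K)$, and both spaces have $K$-dimension $q^{2n}$ (for $\mathcal{A}$ this is $\left\vert V\right\vert$; for the matrix ring it is $(q^{n})^{2}$). For multiplicativity it suffices, by $K$-bilinearity, to verify $\Phi(e_{ab}e_{cd})=\Phi(e_{ab})\Phi(e_{cd})$ on basis elements: the computation $e_{ab}e_{cd}=(-a)e_{0}(b-c)e_{0}(d)$ together with (\ref{e0(x)e0}) gives $e_{ab}e_{cd}=0$ when $b\neq c$ and $e_{ab}e_{bd}=e_{ad}$, which is exactly the relation $E_{ab}E_{cd}=\delta_{bc}E_{ad}$ satisfied by matrix units. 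Finally, $\Phi$ sends the identity $(0)=\sum_{a\in W^{\ast}}e_{a}$ of $\mathcal{A}$ to $\sum_{a\in W^{\ast}}E_{aa}$, the identity matrix, so $\Phi$ is a unital $K$-algebra isomorphism.

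In truth no step here is a genuine obstacle: all of the substantive work — producing the $e_{ab}$, proving the matrix-unit relations via (\ref{e0(x)e0}) and (\ref{ea(x)eb}), and showing the $e_{ab}$ are linearly independent and spanning — has already been carried out before the statement. The only points demanding care are bookkeeping: confirming that the two dimension counts agree ($\dim_{K}\mathcal{A}=q^{2n}=(q^{n})^{2}$) and that $\left\vert W^{\ast}\right\vert =q^{n}$, and being consistent about the row/column indexing so that the relations match on the nose. One could instead argue abstractly — a ring with a full set of $q^{n}\times q^{n}$ matrix units whose diagonal sum is $1$ is isomorphic to $\mathcal{M}_{q^{n}}$ over the corner ring $e_{0}\mathcal{A}e_{0}$, and here $e_{0}\mathcal{A}e_{0}=Ke_{0}\cong K$ — but the explicit map above is the most direct route given what is already in hand.
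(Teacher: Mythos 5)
Your proposal is correct and follows the same route as the paper: the theorem in the source has no separate proof because the preceding paragraphs already establish that the $e_{ab}$ are $q^{2n}$ matrix units summing to the identity and forming a $K$-basis of $\mathcal{A}$, which is precisely what you package into the explicit map $e_{ab}\mapsto E_{ab}$. Your closing remark about the corner ring $e_{0}\mathcal{A}e_{0}\cong K$ is also implicit in the paper's observation that $e_{a}\mathcal{A}e_{b}=Ke_{ab}$, so nothing is missing or different in substance.
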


\noindent We identify $\mathcal{A}$ with $\mathcal{M}_{q^{n}}(K)$ by this
isomorphism, that is, by means of the matrix units $e_{ab}$. If we wish to
emphasize that we are regarding $\mathfrak{a\in }\mathcal{A}$ as a matrix,
we shall write $[\mathfrak{a}]$.

\begin{remark}
\label{RemTrace}\emph{The trace of the map }$\mathfrak{b}\rightarrow 
\mathfrak{ab}$ \emph{is }$q^{n}\mathrm{tr}\mathfrak{a}$,\emph{\ with }$%
\mathfrak{a,b}\in \mathcal{A}$.\emph{\ If }$x\neq 0$, $\mathfrak{b}%
\rightarrow (x)\mathfrak{b}$\emph{\ has trace }$0$, \emph{so that }$\mathrm{%
tr}[(x)]=0$.\emph{\ Thus}%
\begin{equation}
\mathrm{tr}[(x)]=\left\{ 
\begin{array}{c}
0\text{, }x\neq 0 \\ 
q^{n}\text{, }x=0.%
\end{array}%
\right.  \label{trace}
\end{equation}
\end{remark}

\begin{remark}
\label{RemNewUnits}\emph{Suppose that $\mathcal{M}$ is a matrix algebra over
a field and that $\mathcal{M}\emph{\ }$has two sets of matrix units, the }$%
e_{ab}$ \emph{and the }$e_{ab}^{\prime }$\emph{, indexed by the same finite
set. Then by \cite[Theorem 3, p. 59]{J}, there is an invertible }$u$\emph{\
in $\mathcal{M}$ with }$e_{ab}^{\prime }=u^{-1}e_{ab}u$\emph{, for all }$a,b$%
\emph{. It follows that traces and determinants of members of $\mathcal{M}$
computed from the two sets of units are the same.}
\end{remark}

\section{The Weil representation\label{SectWeilRep}}

Continue now with the set-up and notation of Section \ref{SectAMatrixRing}.
We have $s(-1)=\sum_{v\in V}(v)$, by Proposition \ref{PropInv}. As in \cite%
{SP}, we set $j=q^{-n}s(-1)$, an involution. To determine $[j]$, observe that%
\begin{eqnarray*}
je_{0} &=&q^{-2n}\sum_{v\in V,w\in W}(v)(w) \\
&=&q^{-2n}\sum_{v\in V,w\in W}f(v,w)(v+w) \\
&=&q^{-2n}\sum_{u\in V}\left( \sum_{w\in W}f(u,w)\right) (u).
\end{eqnarray*}%
Once again, the inner sum is 0 if $u\notin W$ and $q^{n}$ if $u\in W$. So%
\begin{equation*}
je_{0}=e_{0}.
\end{equation*}%
Thus 
\begin{eqnarray*}
e_{a}je_{b} &=&(-a)e_{0}(a)j(-b)e_{0}(b) \\
&=&(-a)e_{0}(a)(b)je_{0}(b) \\
&=&(-a)e_{0}(a+b)e_{0}(b).
\end{eqnarray*}%
This last is 0 unless $b=-a$, when it is just $e_{a,-a}$.

Partition $W^{\ast }$ as $P\cup -P\cup \{0\}$ in some way, and on ordering $%
P $, order $-P$ so that $-a<-b$ in $-P$ when $a<b$ in $P$. Then label rows
and columns with the members of $W^{\ast }$ following that arrangement. This
makes%
\begin{equation*}
\lbrack j]=\left[ 
\begin{array}{ccc}
0 & I & 0 \\ 
I & 0 & 0 \\ 
0 & 0 & 1%
\end{array}%
\right] ,
\end{equation*}%
where $I$ is the identity matrix of size $(q^{n}-1)/2$ and the various 0s
have appropriately matching sizes. Direct computations give the following
result:

\begin{proposition}
\label{PropjCent}Let $\mathcal{C}$ be the centralizer of $j$ in $\mathcal{A}$%
. Then the members of $\mathcal{C}$ are the matrices of the form%
\begin{equation}
\left[ 
\begin{array}{ccc}
A & B & \mathbf{b} \\ 
B & A & \mathbf{b} \\ 
\mathbf{a} & \mathbf{a} & \alpha%
\end{array}%
\right] ,  \label{centmatrix}
\end{equation}%
the blocks matching those of $\left[ j\right] $. The map given by%
\begin{equation*}
\left[ 
\begin{array}{ccc}
A & B & \mathbf{b} \\ 
B & A & \mathbf{b} \\ 
\mathbf{a} & \mathbf{a} & \alpha%
\end{array}%
\right] \rightarrow \left[ 
\begin{array}{ccc}
A-B & 0 & 0 \\ 
0 & A+B & 2\mathbf{b} \\ 
0 & \mathbf{a} & \alpha%
\end{array}%
\right]
\end{equation*}%
provides an isomorphism of $\mathcal{C}$ with the direct sum $\mathcal{M}%
_{(q^{n}-1)/2}(K)\oplus \mathcal{M}_{(q^{n}+1)/2}(K)$.
\end{proposition}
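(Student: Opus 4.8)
The statement splits into the two ``direct computations'' it refers to, and I would do them in order. First, to pin down the shape (\ref{centmatrix}), write a general element of $\mathcal{M}_{q^{n}}(K)=\mathcal{A}$ in block form $\left[\begin{array}{ccc} A & B & \mathbf{b}_{1}\\ C & D & \mathbf{b}_{2}\\ \mathbf{a}_{1} & \mathbf{a}_{2} & \alpha\end{array}\right]$ with the partition matching that of $[j]$, so $A,B,C,D$ are $\tfrac{q^{n}-1}{2}\times\tfrac{q^{n}-1}{2}$ blocks, the $\mathbf{b}_{i}$ are columns, the $\mathbf{a}_{i}$ rows, and $\alpha$ a scalar. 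Multiplying this matrix on the left (resp. right) by the explicit $[j]$ just interchanges the two $\tfrac{q^{n}-1}{2}$-blocks of rows (resp. columns) and fixes the last block. Equating $[j]\mathfrak{m}=\mathfrak{m}[j]$ then yields exactly $C=B$, $D=A$, $\mathbf{b}_{1}=\mathbf{b}_{2}$, $\mathbf{a}_{1}=\mathbf{a}_{2}$, with no constraint on $\alpha$, and there are no other constraints. So $\mathcal{C}$ consists precisely of the matrices (\ref{centmatrix}).

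For the isomorphism the key observation is that the displayed map is conjugation by the single fixed invertible matrix
\[
Q=\left[\begin{array}{ccc} I & I & 0\\ -I & I & 0\\ 0 & 0 & 2\end{array}\right],
\]
whose determinant is $2^{(q^{n}+1)/2}\neq 0$. One block multiplication confirms that, for $\mathfrak{m}$ of the form (\ref{centmatrix}), $Q^{-1}\mathfrak{m}Q$ equals the claimed block-diagonal matrix $\left[\begin{array}{ccc} A-B & 0 & 0\\ 0 & A+B & 2\mathbf{b}\\ 0 & \mathbf{a} & \alpha\end{array}\right]$. Since $\mathfrak{x}\mapsto Q^{-1}\mathfrak{x}Q$ is a $K$-algebra automorphism of $\mathcal{M}_{q^{n}}(K)$, its restriction to the subalgebra $\mathcal{C}$ is an injective homomorphism of $K$-algebras onto the set of those block-diagonal matrices. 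It then remains only to recognize that set: grouping the last two blocks into one of size $(q^{n}+1)/2$, it is the set of all $\operatorname{diag}(X,Y)$ with $X\in\mathcal{M}_{(q^{n}-1)/2}(K)$ and $Y\in\mathcal{M}_{(q^{n}+1)/2}(K)$, because as $A,B$ run independently over all $\tfrac{q^{n}-1}{2}\times\tfrac{q^{n}-1}{2}$ matrices so do $X=A-B$ and $A+B$, while $\mathbf{b},\mathbf{a},\alpha$ are unconstrained. Hence $\mathcal{C}\cong\mathcal{M}_{(q^{n}-1)/2}(K)\oplus\mathcal{M}_{(q^{n}+1)/2}(K)$.

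I do not expect a genuine obstacle here; it is all block-matrix bookkeeping. The one point worth flagging is the role of $2$ being a unit in $K$ (true since $K$ has characteristic $0$): it is what makes $Q$ invertible — so that the map is really conjugation inside the matrix ring, hence automatically multiplicative — and what lets $A-B$ and $A+B$ be prescribed independently, which is what gives surjectivity onto the direct sum. Conceptually one could also anticipate the answer without computation: $[j]$ is semisimple with exactly the two eigenvalues $\pm1$ and eigenspaces of dimensions $(q^{n}-1)/2$ and $(q^{n}+1)/2$, so its centralizer in $\operatorname{End}_{K}(K^{q^{n}})$ is $\operatorname{End}(E_{-1})\oplus\operatorname{End}(E_{+1})$; the explicit formula above is merely this decomposition expressed in a basis adapted to $E_{-1}$ and $E_{+1}$, with the last basis vector rescaled so that the $2$'s appear as displayed.
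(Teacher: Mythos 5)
Your proof is correct, and it supplies exactly the ``direct computations'' that the paper invokes but does not spell out (the paper states Proposition~\ref{PropjCent} with no written argument). The block identities $C=B$, $D=A$, $\mathbf{b}_1=\mathbf{b}_2$, $\mathbf{a}_1=\mathbf{a}_2$ from $[j]\mathfrak{m}=\mathfrak{m}[j]$, and the conjugation by $Q=\left[\begin{smallmatrix} I & I & 0\\ -I & I & 0\\ 0 & 0 & 2\end{smallmatrix}\right]$ producing the block-diagonal form, all check out; the closing remark identifying the decomposition with $\operatorname{End}(E_{-1})\oplus\operatorname{End}(E_{+1})$ for the $\pm 1$ eigenspaces of the involution $j$ is the right conceptual gloss.
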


The representation $g\rightarrow s(g)$ is a projective representation of $%
\mathrm{Sp}(V)$, with factor set $\mu $. A key result in any development of
the Weil representation is that it is equivalent to an ordinary
representation. Thus we seek a function $\eta :\mathrm{Sp}(V)\rightarrow K$
such that with $t(g)=\eta (g)s(g)$, the representation $g\rightarrow t(g)$
is ordinary; that is, $\mu (g,h)=\eta (gh)\eta (g)^{-1}\eta (h)^{-1}$, $\mu $
as in (\ref{mu}). Now 
\begin{equation*}
\mu (g,h)^{q^{n}}\det s(gh)=\det s(g)\det s(h).
\end{equation*}%
Let $\flat $ be the homomorphism%
\begin{equation*}
\left[ 
\begin{array}{ccc}
A & B & \mathbf{b} \\ 
B & A & \mathbf{b} \\ 
\mathbf{a} & \mathbf{a} & \alpha%
\end{array}%
\right] \rightarrow A-B
\end{equation*}%
of $\mathcal{C}$ onto $\mathcal{M}_{(q^{n}-1)/2}(K)$, as inferred from
Proposition \ref{PropjCent}. Then%
\begin{equation*}
\mu (g,h)^{(q^{n}-1)/2}\det s(gh)^{\flat }=\det s(g)^{\flat }\det
s(h)^{\flat }.
\end{equation*}%
Consequently:

\begin{lemma}
\label{LemEta}If we define%
\begin{equation}
\eta (g)=(\det s(g)^{\flat })^{2}(\det s(g))^{-1},  \label{eqEta}
\end{equation}%
we have the needed equation $\mu (g,h)=\eta (gh)\eta (g)^{-1}\eta (h)^{-1}$.
\end{lemma}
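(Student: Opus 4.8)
The plan is to verify the cocycle identity directly from the definition \eqref{eqEta} by substituting and using the multiplicativity of $\det$ together with the relation \eqref{mu}. First I would note that $s(g), s(h), s(gh)$ all lie in the centralizer $\mathcal{C}$ of $j$ (by Corollary \ref{CorSpCentGen}), so that $s(g)^{\flat}$ makes sense, and that $\flat$ is an algebra homomorphism onto $\mathcal{M}_{(q^{n}-1)/2}(K)$; hence from $s(g)s(h) = \mu(g,h)s(gh)$ we get both $\det s(g)\det s(h) = \mu(g,h)^{q^{n}}\det s(gh)$ (taking determinants of $q^{n}\times q^{n}$ matrices) and $\det s(g)^{\flat}\det s(h)^{\flat} = \mu(g,h)^{(q^{n}-1)/2}\det s(gh)^{\flat}$ (applying $\flat$ and then taking determinants of $(q^{n}-1)/2$-sized matrices). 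These are exactly the two displayed identities immediately preceding the lemma.

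Next I would simply compute $\eta(gh)\eta(g)^{-1}\eta(h)^{-1}$ using \eqref{eqEta}. The numerator factors contribute $(\det s(gh)^{\flat})^{2}(\det s(g)^{\flat})^{-2}(\det s(h)^{\flat})^{-2}$, which by the second displayed identity equals $\mu(g,h)^{-(q^{n}-1)}$ (writing $\det s(g)^{\flat}\det s(h)^{\flat} = \mu(g,h)^{(q^{n}-1)/2}\det s(gh)^{\flat}$ and rearranging). The denominator factors contribute $(\det s(gh))^{-1}\det s(g)\det s(h)$, which by the first displayed identity equals $\mu(g,h)^{q^{n}}$. Multiplying, $\eta(gh)\eta(g)^{-1}\eta(h)^{-1} = \mu(g,h)^{-(q^{n}-1)}\mu(g,h)^{q^{n}} = \mu(g,h)$, as desired. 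One should also remark that $\eta(g)$ is a nonzero element of $K$, since $s(g)$ is invertible (so $\det s(g) \neq 0$) and $s(g)^{\flat}$ is invertible in $\mathcal{M}_{(q^{n}-1)/2}(K)$ (as the image of a unit under an algebra homomorphism landing in a matrix ring — or, more carefully, because the full image of $s(g)$ under the isomorphism of Proposition \ref{PropjCent} is invertible, forcing each block to be).

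The only genuine point requiring care — really the main obstacle, though a mild one — is justifying that $\det s(g)^{\flat} \neq 0$, i.e. that the $A - B$ block is invertible and not merely that the whole centralizer element is. This follows because under the isomorphism of Proposition \ref{PropjCent}, $s(g)$ maps to a block-diagonal-plus matrix whose invertibility is equivalent to invertibility of the $(A-B)$ block together with the lower-right $\mathcal{M}_{(q^{n}+1)/2}(K)$ block; since $s(g)$ is a unit in $\mathcal{A} \cong \mathcal{M}_{q^{n}}(K)$, its image in $\mathcal{C}$ is a unit, so both blocks are invertible. Once that is in hand, the computation above is entirely formal. I would present it as a short displayed chain of equalities for $\eta(gh)\eta(g)^{-1}\eta(h)^{-1}$, citing \eqref{mu} and the two determinant identities, and close with the observation that this is precisely the required relation $\mu(g,h) = \eta(gh)\eta(g)^{-1}\eta(h)^{-1}$, so that $g \mapsto t(g) = \eta(g)s(g)$ is an ordinary representation.
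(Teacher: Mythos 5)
Your proposal is correct and follows exactly the route the paper intends: the lemma is stated as a direct consequence of the two determinant identities displayed immediately before it, and your computation simply carries out the implied substitution $\mu(g,h)^{-(q^n-1)}\mu(g,h)^{q^n}=\mu(g,h)$. Your additional observation that $\det s(g)^{\flat}\neq 0$ (because units in $\mathcal{C}\cong\mathcal{M}_{(q^n-1)/2}(K)\oplus\mathcal{M}_{(q^n+1)/2}(K)$ project to units in each summand) is a correct and worthwhile remark on the well-definedness of $\eta$, though the paper leaves it implicit.
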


\begin{remark}
\label{RemEtaUnique}\emph{The comments in Remark \ref{RemNewUnits} show that 
}$\eta $\emph{\ is well-defined: it does not depend on the choices made in
obtaining the matrix realization of }$A$\emph{. In fact, if we also have }$%
\mu (g,h)=\eta ^{\prime }(gh)\eta ^{\prime }(g)^{-1}\eta ^{\prime }(h)^{-1}$%
\emph{, then }$\eta ^{\prime }\eta ^{-1}$\emph{\ is a homomorphism of }$%
Sp(V) $\emph{\ to }$C^{\varnothing }$\emph{. As }$Sp(V)^{\prime }=Sp(V)$%
\emph{\ except when }$n=1$\emph{\ and }$q$\emph{\ (odd) is 3, }$\eta
^{\prime }=\eta $\emph{\ with that one exception.}
\end{remark}

For example, $j^{\flat }=-I_{(q^{n}-1)/2}$, so that $\det (j^{\flat
})=(-1)^{(q^{n}-1)/2}=\det (j)$. Then 
\begin{equation*}
\det (j^{\flat })^{2}\det (j)^{-1}=\det (j)=(-1)^{(q^{n}-1)/2}.
\end{equation*}%
Since $(q^{n}-1)/2\equiv n(q-1)/2\,(\bmod\ 2)$, $%
(-1)^{(q^{n}-1)/2}=(-1)^{n(q-1)/2}=\delta ^{n}$. As $s(-1)=q^{n}j$, we get 
\begin{equation*}
\eta (-1)=\delta ^{n}(q^{n})^{2((q^{n}-1)/2)-q^{n}}=\delta ^{n}q^{-n}.
\end{equation*}%
Consequently, 
\begin{equation}
t(-1)=\delta ^{n}j.  \label{t(-1)}
\end{equation}

\begin{definition}
\label{DefWeilRep}\emph{The representation }$\mathcal{W}$\emph{\ of }$Sp(V)$%
\emph{\ given by }$g\rightarrow t(g)$\emph{, with degree }$q^{n}$\emph{, is
the \textbf{Weil representation}. By Proposition \ref{PropjCent}, }$\mathcal{%
W}$\emph{\ has the two constituents }$\mathcal{W}_{-}:g\rightarrow
t(g)^{\flat }$\emph{\ and }$\mathcal{W}_{+}:g\rightarrow t(g)^{\sharp }$%
\emph{, of degrees }$(q^{n}-1)/2$\emph{\ and }$(q^{n}+1)/2$\emph{,
respectively. Since }$C$\emph{\ is spanned by the }$t(g)$\emph{, from
Corollary \ref{CorSpCentGen}, both of these representations are irreducible.
We denote the characters of these three representations by }$\omega $\emph{, 
}$\omega _{-}$\emph{, and }$\omega _{+}$\emph{,\ so that }$\omega =\omega
_{-}+\omega _{+}$\emph{.}
\end{definition}

\section{The Weil character\label{SectWeil1}}

Before examining the Weil character in detail, we record this implication of
the matrix picture above:

\begin{proposition}
\label{PropOmegaMinus}For the Weil characters,%
\begin{equation}
\omega _{-}(g)=\frac{\omega (g)-\delta ^{n}\omega (-g)}{2}.
\label{omegaminus}
\end{equation}%
Thus%
\begin{equation}
\omega _{+}(g)=\frac{\omega (g)+\delta ^{n}\omega (-g)}{2}.
\label{omegaplus}
\end{equation}
\end{proposition}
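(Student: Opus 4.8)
The plan is to read off the two characters from the matrix decomposition of the centralizer $\mathcal{C}$ furnished by Proposition \ref{PropjCent}. Recall that $\mathcal{W}_{-}$ is $g\rightarrow t(g)^{\flat}$ and $\mathcal{W}_{+}$ is $g\rightarrow t(g)^{\sharp}$, where $\flat$ and $\sharp$ are the two projections coming from the isomorphism $\mathcal{C}\cong \mathcal{M}_{(q^{n}-1)/2}(K)\oplus \mathcal{M}_{(q^{n}+1)/2}(K)$. Hence $\omega(g)=\operatorname{tr}[t(g)]=\omega_{-}(g)+\omega_{+}(g)$ is immediate, and it remains to produce one more linear relation between $\omega(g)$, $\omega_{-}(g)$, $\omega_{+}(g)$; combined with the first, this pins down both.

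The second relation should come from evaluating $\omega$ at $-g$, i.e.\ at the product $(-1)\cdot g$. First I would use $t(-1)=\delta^{n}j$ from (\ref{t(-1)}), together with $t(-g)=t((-1)g)=\mu(-1,g)^{-1}t(-1)t(g)$; but in fact it is cleaner to argue at the level of $s$: since $(-1)$ is central in $\mathrm{Sp}(V)$, conjugation shows $s(-g)$ and $s(-1)s(g)=q^{n}js(g)$ differ by a scalar with matching $(0)$-coefficient, and both the $\flat$- and $\sharp$-blocks scale the same way, so $[t(-g)]=\delta^{n}[j][t(g)]$ as matrices (after normalizing $\eta$). Now in the block form of (\ref{centmatrix}), left multiplication by $[j]=\left[\begin{smallmatrix}0&I&0\\I&0&0\\0&0&1\end{smallmatrix}\right]$ swaps the two diagonal $A$-blocks with the $B$-blocks, turning $\left[\begin{smallmatrix}A&B&\mathbf{b}\\B&A&\mathbf{b}\\\mathbf{a}&\mathbf{a}&\alpha\end{smallmatrix}\right]$ into $\left[\begin{smallmatrix}B&A&\mathbf{b}\\A&B&\mathbf{b}\\\mathbf{a}&\mathbf{a}&\alpha\end{smallmatrix}\right]$. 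Under the isomorphism of Proposition \ref{PropjCent} this sends the pair $(A-B,\ \left[\begin{smallmatrix}A+B&2\mathbf{b}\\\mathbf{a}&\alpha\end{smallmatrix}\right])$ to $(B-A,\ \left[\begin{smallmatrix}A+B&2\mathbf{b}\\\mathbf{a}&\alpha\end{smallmatrix}\right])$, i.e.\ it negates the $\mathcal{W}_{-}$-component and fixes the $\mathcal{W}_{+}$-component. Taking traces: $\omega(-g)=\delta^{n}\bigl(-\omega_{-}(g)+\omega_{+}(g)\bigr)$, equivalently $\delta^{n}\omega(-g)=\omega_{+}(g)-\omega_{-}(g)$.

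Solving the two linear equations $\omega(g)=\omega_{-}(g)+\omega_{+}(g)$ and $\delta^{n}\omega(-g)=\omega_{+}(g)-\omega_{-}(g)$ gives $\omega_{-}(g)=\bigl(\omega(g)-\delta^{n}\omega(-g)\bigr)/2$ and $\omega_{+}(g)=\bigl(\omega(g)+\delta^{n}\omega(-g)\bigr)/2$, which is (\ref{omegaminus}) and (\ref{omegaplus}). The main obstacle is the bookkeeping in the step $[t(-g)]=\delta^{n}[j][t(g)]$: one must check that the projective scalar $\mu(-1,g)$ is absorbed correctly by the normalization $\eta$, and that $-1$ is genuinely central so that $[j]$ commutes (up to scalar) with everything, letting one write the product as $[j][t(g)]$ rather than something more complicated; once $t(-g)=\delta^n j\,t(g)$ is in hand as an honest matrix identity, the block computation and the trace extraction are routine.
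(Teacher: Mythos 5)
Your proposal is correct and follows essentially the same route as the paper: compute $t(-g)$, see how $[j]$ acts on the block form of Proposition \ref{PropjCent}, read off the trace relation, and solve the resulting pair of linear equations. The one thing to clean up is your treatment of $t(-g)$: you write $t(-g)=\mu(-1,g)^{-1}t(-1)t(g)$ and then flag as ``the main obstacle'' the need to check that $\mu(-1,g)$ is absorbed by $\eta$. That worry is unfounded. By construction $t(g)=\eta(g)s(g)$ is an \emph{ordinary} representation --- precisely the content of Lemma \ref{LemEta} --- so $t(-g)=t(-1)t(g)$ holds outright, with no $\mu$-factor and no need for the detour through $s$ or any appeal to centrality of $-1$ beyond $(-1)g=-g$. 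The paper simply writes $t(-g)=t(-1)t(g)=\delta^{n}j\,t(g)$ using (\ref{t(-1)}) and multiplies matrices; from there your block swap and trace extraction match the paper's computation exactly.
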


\begin{proof}
If $t(g)=\left[ 
\begin{array}{ccc}
A & B & \mathbf{b} \\ 
B & A & \mathbf{b} \\ 
\mathbf{a} & \mathbf{a} & \alpha%
\end{array}%
\right] $, then $t(g)^{\flat }=A-B$ and%
\begin{equation*}
t(-g)=t(-1)t(g)=\delta ^{n}\left[ 
\begin{array}{ccc}
0 & I & 0 \\ 
I & 0 & 0 \\ 
0 & 0 & 1%
\end{array}%
\right] \left[ 
\begin{array}{ccc}
A & B & \mathbf{b} \\ 
B & A & \mathbf{b} \\ 
\mathbf{a} & \mathbf{a} & \alpha%
\end{array}%
\right] =\delta ^{n}\left[ 
\begin{array}{ccc}
B & A & \mathbf{b} \\ 
A & B & \mathbf{b} \\ 
\mathbf{a} & \mathbf{a} & \alpha%
\end{array}%
\right] .
\end{equation*}%
Thus%
\begin{eqnarray*}
\frac{\omega (g)-\delta ^{n}\omega (-g)}{2} &=&\frac{(2\mathrm{tr}A+\alpha
)-\delta ^{n}(\delta ^{n}(2\mathrm{tr}B+\alpha ))}{2} \\
&=&\mathrm{tr}A-\mathrm{tr}B \\
&=&\omega _{-}(g).
\end{eqnarray*}
\end{proof}

To begin with, from $t(g)=\eta (g)s(g)$ and $\mathrm{tr}s(g)=q^{n}$, we have
for the Weil character $\omega $,%
\begin{equation}
\omega (g)=q^{n}\eta (g).  \label{omegabyeta}
\end{equation}%
For example,%
\begin{equation}
\omega (-1)=\delta ^{n}.  \label{omega(-1)}
\end{equation}

To obtain formulas for $\omega $, we start with any $g\in \mathrm{Sp}(V)$
for which $W^{g}=W$. Let $g^{\ast }$ be the map induced by $g$ on $W^{\ast }$%
: if $a\in W^{\ast }$, $W+a^{g^{\ast }}=W+a^{g}$. For $[s(g)]$, we have $%
e_{0}^{g}=e_{0}$, so $e_{0}s(g)e_{a}=s(g)e_{0}e_{a}=0$ unless $a=0$;
similarly $e_{a}s(g)e_{0}=0$ if $a\neq 0$. Thus%
\begin{equation*}
\lbrack s(g)]=\left[ 
\begin{array}{ccc}
A & B & 0 \\ 
B & A & 0 \\ 
0 & 0 & \alpha%
\end{array}%
\right] .
\end{equation*}%
Here%
\begin{equation*}
\alpha e_{0}=e_{0}s(g)e_{0}=\sum_{y=x^{g-1}\in
V^{g-1}}f(x^{g},x)e_{0}(y)e_{0}=\sum_{y=x^{g-1}\in W\cap
V^{g-1}}f(x^{g},x)e_{0},
\end{equation*}%
the last from (\ref{e0(x)e0}). Thus%
\begin{equation}
\alpha =\sum_{y=x^{g-1}\in W\cap V^{g-1}}f(x^{g},x).  \label{gamma}
\end{equation}%
We have $e_{a}^{g}=(-a^{g})e_{0}(a^{g})$. But $a^{g}=w+a^{g^{\ast }}$ for
some $w\in W$, so 
\begin{eqnarray*}
(a^{g}) &=&(w+a^{g^{\ast }})=f(a^{g^{\ast }},w)(w)(a^{g^{\ast }}), \\
(-a^{g}) &=&f(w,a^{g^{\ast }})(-a^{g^{\ast }})(-w).
\end{eqnarray*}%
Therefore%
\begin{eqnarray*}
e_{a}^{g} &=&f(a^{g^{\ast }},w)f(w,a^{g^{\ast }})(-a^{g^{\ast
}})(-w)e_{0}(w)(a^{g^{\ast }}) \\
&=&e_{a^{g^{\ast }}}.
\end{eqnarray*}%
Thus with $\left[ s(g)\right] =\left[ \alpha _{ab}\right] $,%
\begin{equation*}
\alpha _{ab}e_{ab}=e_{a}s(g)e_{b}=s(g)e_{a^{g^{\ast }}}e_{b}.
\end{equation*}%
This can be nonzero only when $b=a^{g^{\ast }}$. Therefore $[s(g)]$ is
monomial, and it follows that both $A+B$ and $A-B$ are monomial. Moreover, $%
\det (A+B)=(-1)^{\beta }\det (A-B)$, where $\beta $ is the number of nonzero
entries in $B$. This is the number of members $a$ of $P$ for which $%
a^{g^{\ast }}\in -P$. The sign $(-1)^{\beta }$ is evaluated in the next
lemma.

\begin{lemma}
\label{LemmaSign}Let $Z$ be a vector space of dimension $n>0$ over $GF(q)$, $%
q$ odd. Partition $Z$ into $P\cup -P\cup \{0\}$, as above. Let $\sigma
(g)=(-1)^{\left\vert P^{g}\cap (-P)\right\vert }$ for $g$ a nonsingular
linear transformation on $Z$. Then%
\begin{equation*}
\sigma (g)=\chi (\det g).
\end{equation*}
\end{lemma}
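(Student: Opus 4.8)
The claim is that $\sigma(g) = \chi(\det g)$ where $\sigma(g) = (-1)^{|P^g \cap (-P)|}$ for a partition $Z = P \cup -P \cup \{0\}$. The natural approach is to show that $\sigma$ is a homomorphism from $GL(Z)$ to $\{\pm 1\}$ and then identify which homomorphism it is. Since $GL(Z)$ for $q$ odd has abelianization of order $q-1$ (generated by the determinant, as the special linear group is perfect except in tiny cases, and even those exceptions still have $\det$ generating everything two-to-one onto $\pm1$), any homomorphism $GL(Z) \to \{\pm 1\}$ factors through $\det$ and hence equals $\chi(\det(-))^k$ for $k \in \{0,1\}$; so it suffices to evaluate $\sigma$ on one transformation with non-square determinant to pin down $k=1$, plus check $\sigma$ is not identically trivial.

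First I would establish that $\sigma$ is independent of the choice of partition $P$. Changing the partition amounts to swapping some $a \leftrightarrow -a$; I would check that replacing $P$ by $P' = (P \setminus \{a\}) \cup \{-a\}$ changes $|P^g \cap (-P)|$ by an even number (the membership of $\pm a$ on the "source" side and of $\pm a^g$ on the "target" side each flip in a way that preserves parity), so $\sigma(g)$ is unchanged. This well-definedness is what lets the next step work, since composing two transformations naturally uses different intermediate partitions.

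Next, the homomorphism property: $\sigma(gh) = \sigma(g)\sigma(h)$. Here I would count, for the composite $gh$, the set $P^{gh} \cap (-P)$ by tracking each line $\langle v\rangle$ with $v \in P$ through the intermediate stage $\langle v^g \rangle$. The sign $(-1)^{|P^g \cap (-P)|}$ counts the "crossings" when applying $g$ relative to the partition, $(-1)^{|P^h \cap (-P)|}$ those for $h$, and one checks the crossings for $gh$ add mod $2$ — essentially the statement that the sign of a permutation composed with re-sorting is multiplicative. The cleanest packaging: view $P$ as choosing, for each line $\ell$ in $\mathbb{P}(Z)$ fixed... no — rather, $g$ permutes the lines of $Z$; fix representatives so $P$ is a transversal of $\{\pm v\}$; then $g$ acting on representatives differs from a genuine permutation of the transversal by sign changes on each coordinate, and $\sigma(g)$ is exactly the product of those sign corrections, which is multiplicative by the usual cocycle/telescoping argument. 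I expect this bookkeeping to be the main obstacle — getting the parity accounting rigorous rather than hand-wavy.

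Finally, evaluation. I would compute $\sigma$ on a diagonal transformation $g = \mathrm{diag}(\mu, 1, \ldots, 1)$ with $\mu$ a non-square, using the partition $Z = P \cup -P \cup \{0\}$ built coordinate-wise (e.g. order $Z$ lexicographically and take $P$ to be the "positive" half). Multiplication by $\mu$ on the first coordinate permutes $GF(q)^\emptyset$ and sends exactly half... more carefully: on the one-dimensional subspace $GF(q) e_1$ the map $\xi \mapsto \mu\xi$ sends $(q-1)/2$ nonzero elements of the chosen half into the other half (since $\mu$ non-square means multiplication by $\mu$ swaps the two cosets of squares, but the halving $P$ versus $-P$ is by a different criterion) — I'd pick the partition on $GF(q) e_1$ so the count is transparent, getting $|P^g \cap (-P)| \equiv (q-1)/2 \pmod 2$ there and $0$ on the complementary part, whence $\sigma(g) = (-1)^{(q-1)/2} = \delta = \chi(\mu) = \chi(\det g)$. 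Combined with the homomorphism property and the fact that $\det: GL(Z) \to GF(q)^\times$ together with $\sigma$ both kill $SL(Z)$'s commutators, agreement on a generator of the abelianization gives $\sigma = \chi \circ \det$ identically. I should also handle $\det g$ a square by noting such $g$ lies in the kernel of $\chi \circ \det$, and $\sigma$ of it is a product of $\sigma$'s of elementary transvections (which have determinant $1$ hence square) — so I'd verify $\sigma(\text{transvection}) = 1$ directly, or simply invoke that $\sigma$ restricted to $SL(Z)$ is a homomorphism to $\{\pm1\}$ from a group equal to its own commutator subgroup (for $n \geq 2$ or $q > 3$), and dispatch the remaining small case $n=1$ by the explicit $GF(q)^\times$ computation above.
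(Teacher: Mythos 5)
Your plan matches the paper's structurally: show that $\sigma$ is a homomorphism $GL(Z)\to\{\pm1\}$, invoke the fact that $GL(Z)/GL(Z)'$ is cyclic of order $q-1$ so every such homomorphism is either trivial or $\chi\circ\det$, and then settle which one by evaluating $\sigma$ on a single element of nonsquare determinant. The step you call ``the main obstacle'' --- making the multiplicativity of $\sigma$ rigorous --- is exactly what the paper dispatches with a clean device: introduce indeterminates $X_z$ indexed by $Z^{\emptyset}$, set $\pi=\prod_{z\in P}(X_z-X_{-z})$, and let $g$ act by $X_z\mapsto X_{z^g}$. Then $\pi^g=\sigma(g)\pi$, and since this is a right action, $\sigma(gh)\pi=\pi^{gh}=(\pi^g)^h=\sigma(g)\sigma(h)\pi$. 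This is precisely your ``sign corrections'' / cocycle idea, packaged so the parity bookkeeping is automatic. The same polynomial also gives independence of the partition in one line: a second transversal yields $\pi'=\varepsilon\pi$, hence $\pi'^g=\sigma(g)\pi'$. You should adopt this packaging rather than try to carry out the telescoping by hand.

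Your evaluation step, however, has a genuine error. You compute $\sigma(g)$ for $g=\mathrm{diag}(\mu,1,\ldots,1)$ with $\mu$ a nonsquare, claim $\left\vert P^g\cap(-P)\right\vert\equiv(q-1)/2\pmod2$, and conclude $\sigma(g)=(-1)^{(q-1)/2}=\delta=\chi(\mu)$. But $\chi(\mu)=-1$ for every nonsquare $\mu$, whereas $\delta=(-1)^{(q-1)/2}=+1$ when $q\equiv1\pmod4$; the chain of equalities fails in that case. Since $\sigma$ is partition-independent and the right answer must be $\sigma(g)=-1$, the count $\left\vert P^g\cap(-P)\right\vert$ has to be odd for every choice of $P$, which contradicts your claimed parity whenever $(q-1)/2$ is even. (Note also that when $q\equiv1\pmod4$ the element $-1$ is a square, so the ``$P$ = squares'' transversal you seem to have in mind does not exist.) The paper sidesteps this entirely by realizing $Z$ as $GF(q^n)$, taking $\alpha$ primitive, $P=\{1,\alpha,\ldots,\alpha^{(q^n-3)/2}\}$, and $g:z\mapsto\alpha z$; then $P^g\cap(-P)=\{\alpha^{(q^n-1)/2}\}$ has exactly one element, so $\sigma(g)=-1$, while $\det g$ is the norm $\alpha^{(q^n-1)/(q-1)}$, a primitive (hence nonsquare) element of $GF(q)$, giving $\chi(\det g)=-1$ as required. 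You should replace your diagonal-matrix computation with this one, or at least pick a partition for which the count visibly comes out odd.
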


\begin{proof}
Index $q^{n}-1$ independent variables $X_{z}$ by the nonzero members of $Z$,
and in the $\mathbb{Z}$-ring of polynomials $R$ in the $X_{z}$, let $\pi
=\dprod\limits_{z\in P}(X_{z}-X_{-z})$. Let $g$ act on $R$ by $%
X_{z}\longrightarrow X_{z^{g}}$. Then $\pi ^{g}=\sigma (g)\pi $, and it
follows that $g\longrightarrow \sigma (g)$ is a homomorphism from $GL(Z)$ to 
$\left\langle -1\right\rangle $. If $\sigma $ is nontrivial, then it must be
that $\sigma (g)=\chi (\det g)$, since $GL(Z)/GL(Z)^{\prime }$ is cyclic of
order $q-1$ \cite[Theorem 4.7]{A}.

For the nontriviality of $\sigma $, observe first that if $P^{\prime }\cup
-P^{\prime }\cup \{0\}$ is a second partition of $Z$ and $\pi ^{\prime }$
the corresponding product, then $\pi ^{\prime }=\varepsilon \pi $ for some
sign, $\varepsilon $. It follows that $\pi ^{\prime g}=\sigma (g)\pi
^{\prime }$, so that $\sigma (g)$ does not depend on the partition used. Now
realize $Z$ as $GF(q^{n})$ and let $\alpha $ be a primitive element. Let $%
P=\left\{ 1,\alpha ,\ldots ,\alpha ^{(q^{n}-3)/2}\right\} $; $\alpha
^{(q^{n}-1)/2}=-1$. Then $g:z\longrightarrow \alpha z$ has $\left\vert
P^{g}\cap (-P)\right\vert =1$, $\sigma (g)=-1$, and $\sigma $ is indeed
nontrivial. By way of corroboration, this $g$ has determinant $\alpha
+\alpha ^{q}+\ldots +\alpha ^{q^{n-1}}=\alpha ^{(q^{n}-1)/(q-1)}$ (the norm
of $\alpha $), a primitive element of $GF(q)$. Thus $\chi (\alpha
^{(q^{n}-1)/(q-1)})=-1=\sigma (g)$.
\end{proof}

Now the matrix $\left[ 
\begin{array}{ccc}
A & B & 0 \\ 
B & A & 0 \\ 
0 & 0 & \alpha%
\end{array}%
\right] $ is equivalent to $\left[ 
\begin{array}{ccc}
A-B & B-A & 0 \\ 
0 & A+B & 0 \\ 
0 & 0 & \alpha%
\end{array}%
\right] $, with no scalings involved. Thus 
\begin{equation*}
\det \left[ s(g)\right] =(-1)^{\beta }\alpha \det (A-B)^{2}=\chi (\det
g^{\ast })\alpha \det (A-B)^{2},
\end{equation*}%
by the lemma. From $\eta (g)=(\det s(g)^{\flat })^{2}(\det s(g))^{-1}$
(equation (\ref{eqEta})) and $\det s(g)^{\flat }=\det (A-B)$, we conclude:

\begin{theorem}
\label{ThmEta}If $g\in \mathrm{Sp}(V)$ and $W^{g}=W$, then%
\begin{equation*}
\eta (g)=\frac{\chi (\det g^{\ast })}{\sum\limits_{y=x^{g-1}\in W\cap
V^{g-1}}f(x^{g},x)}.
\end{equation*}
\end{theorem}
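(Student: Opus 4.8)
The plan is to read the formula off directly from the matrix computation assembled in the paragraphs preceding the statement, combining the block description of $[s(g)]$ with the definition (\ref{eqEta}) of $\eta$.

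First I would record what has already been established for $g \in \mathrm{Sp}(V)$ with $W^g = W$: since $e_0^g = e_0$, the products $e_0 s(g) e_a$ and $e_a s(g) e_0$ vanish for $a \neq 0$, so relative to the partition $W^\ast = P \cup -P \cup \{0\}$ the matrix $[s(g)]$ has the shape
$$[s(g)] = \begin{bmatrix} A & B & 0 \\ B & A & 0 \\ 0 & 0 & \alpha \end{bmatrix}, \qquad \alpha = \sum_{y = x^{g-1} \in W \cap V^{g-1}} f(x^g, x),$$
the value of $\alpha$ being (\ref{gamma}). The relation $e_a^g = e_{a^{g^\ast}}$ shows $[s(g)]$ is monomial, hence so are $A + B$ and $A - B$; in particular $\det(A+B) = (-1)^{\beta} \det(A-B)$, where $\beta = |P^{g^\ast} \cap (-P)|$ counts the negative entries of $B$. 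Also, the corner and edge blocks being zero means $[s(g)]$ has the $\mathcal{C}$-form of Proposition \ref{PropjCent} with the two off-diagonal vectors zero, so $s(g)^{\flat} = A - B$ and $\det s(g)^{\flat} = \det(A - B)$.

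Next I would run the (scaling-free) block reduction of $[s(g)]$ to $\begin{bmatrix} A-B & B-A & 0 \\ 0 & A+B & 0 \\ 0 & 0 & \alpha \end{bmatrix}$ to get $\det[s(g)] = \alpha \det(A-B)\det(A+B) = (-1)^{\beta} \alpha \det(A-B)^2$, and invoke Lemma \ref{LemmaSign} (applied to $Z = W^\ast$ and the nonsingular $g^\ast$) to replace $(-1)^{\beta}$ by $\chi(\det g^\ast)$. Since $s(g)$ is a unit of $\mathcal{A} \cong \mathcal{M}_{q^{n}}(K)$, $\det[s(g)] \neq 0$, which forces $\alpha \neq 0$ and $\det(A - B) \neq 0$; this is what licenses the cancellation in the last step. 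Substituting into (\ref{eqEta}),
$$\eta(g) = \frac{(\det s(g)^{\flat})^{2}}{\det s(g)} = \frac{\det(A-B)^{2}}{\chi(\det g^\ast)\,\alpha\,\det(A-B)^{2}} = \frac{1}{\chi(\det g^\ast)\,\alpha} = \frac{\chi(\det g^\ast)}{\alpha},$$
the final equality because $\chi(\det g^\ast) = \pm 1$; recognizing $\alpha$ as the displayed sum gives the theorem.

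I do not anticipate a real obstacle: all the substantive content --- the block form of $[s(g)]$, its monomiality, and the sign evaluation in Lemma \ref{LemmaSign} --- is in place before the statement, and the proof is essentially bookkeeping. The only points deserving a word of care are the identification $s(g)^{\flat} = A - B$ (which uses the vanishing of the off-diagonal blocks in the last row and column of $[s(g)]$) and the observation that invertibility of $s(g)$ guarantees the denominators are nonzero.
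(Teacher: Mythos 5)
Your proposal is correct and follows the paper's own derivation essentially step for step: block form of $[s(g)]$, monomiality, the sign lemma, the scaling-free block reduction, and substitution into the defining formula for $\eta$. The only addition is the explicit remark that invertibility of $s(g)$ guarantees $\alpha \neq 0$ and $\det(A-B)\neq 0$, which the paper leaves tacit but which is a sound and worthwhile observation.
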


\begin{corollary}
\label{CorInvolution}Suppose that $g\in \mathrm{Sp}(V)$ is an involution.
Let $E_{\varepsilon }$ be the eigenspace of $g$ for eigenvalue $\varepsilon $
and let $\dim E_{-1}=2m$. Then $\eta (g)=\delta ^{m}q^{-m}$ and $\omega
(g)=\delta ^{m}q^{n-m}$.
\end{corollary}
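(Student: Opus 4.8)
The plan is to reduce everything to Theorem \ref{ThmEta} by producing a Lagrangian $W$ with $W^{g}=W$ and then evaluating the two ingredients of the formula there. First I would record the geometry: since $g^{2}=1$, $V$ is the orthogonal direct sum $E_{1}\oplus E_{-1}$, and because $\varphi $ is nondegenerate on $V$ with this sum orthogonal, $\varphi $ restricts to a nondegenerate symplectic form on each $E_{\varepsilon }$. In particular $\dim E_{-1}$ is even, so writing $\dim E_{-1}=2m$ is legitimate, and $\dim E_{1}=2(n-m)$. Now choose a Lagrangian $W_{1}$ of $E_{1}$ and a Lagrangian $W_{-1}$ of $E_{-1}$, and set $W=W_{1}\oplus W_{-1}$. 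Then $W$ is isotropic of dimension $(n-m)+m=n$, hence Lagrangian in $V$, and since $g$ fixes $W_{1}$ pointwise and carries $W_{-1}$ onto itself, $W^{g}=W$. So Theorem \ref{ThmEta} applies.

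Next I would compute the denominator $\sum_{y=x^{g-1}\in W\cap V^{g-1}}f(x^{g},x)$. By Proposition \ref{PropInv}, $V^{g-1}=E_{-1}$ and $Q_{g}=0$. An element of $W\cap E_{-1}$ has zero $W_{1}$-component, so $W\cap V^{g-1}=W_{-1}$, of dimension $m$. For $y=x^{g-1}$ we have $f(x^{g},x)=\psi (\varphi (x^{g},x))=\psi (Q_{g}(y))=1$, the middle equality being the well-definedness in Lemma \ref{LemTheta}. Hence the denominator equals $|W_{-1}|=q^{m}$.

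Then I would compute $\chi (\det g^{\ast })$. The linear map $g^{\ast }$ that $g$ induces on $W^{\ast }$ (equivalently on $V/W$) has determinant $\det (g)/\det (g|_{W})$, an expression independent of the choice of complement. Here $g$ has eigenvalue $1$ with multiplicity $2(n-m)$ and $-1$ with multiplicity $2m$, so $\det g=1$; and $g|_{W}$ is the identity on $W_{1}$ and $-1$ on $W_{-1}$, so $\det (g|_{W})=(-1)^{m}$. Therefore $\det g^{\ast }=(-1)^{m}$ and $\chi (\det g^{\ast })=\chi (-1)^{m}=\delta ^{m}$. Feeding both pieces into Theorem \ref{ThmEta} gives $\eta (g)=\delta ^{m}q^{-m}$, whence $\omega (g)=q^{n}\eta (g)=\delta ^{m}q^{n-m}$ by (\ref{omegabyeta}).

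The only points needing care are verifying that the constructed $W$ is genuinely $g$-invariant (so that Theorem \ref{ThmEta} is legitimately available) and the small bookkeeping identifying $W\cap V^{g-1}$ with $W_{-1}$; neither is a real obstacle. As sanity checks, $g=-1$ gives $m=n$, recovering $\eta (-1)=\delta ^{n}q^{-n}$ and $\omega (-1)=\delta ^{n}$, while $g=1$ gives $m=0$, $\eta (1)=1$, and $\omega (1)=q^{n}$.
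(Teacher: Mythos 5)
Your proof is correct and uses the same overall approach as the paper: construct the adapted Lagrangian $W = W_{1}\oplus W_{-1}$, note that $W\cap V^{g-1}=W_{-1}$ and $Q_{g}=0$ so the denominator in Theorem \ref{ThmEta} is $q^{m}$, and then feed into the formula. The one difference is a local shortcut: you evaluate $\chi(\det g^{\ast})$ algebraically via $\det g^{\ast}=\det g/\det(g|_{W})=(-1)^{m}$, whereas the paper reaches the same $\delta^{m}$ by counting the two-cycles of the involution $g^{\ast}$ on $W^{\ast}$ (there are $q^{n-m}(q^{m}-1)/2$ of them) and extracting the parity; your route is slightly more direct and avoids the parity bookkeeping.
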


\begin{proof}
As in Proposition \ref{PropInv}, $\varphi $ is nonsingular on $E_{-1}$, so
that $\dim E_{-1}$ is indeed even. From Remark \ref{RemTrace} we can tailor
the choice of $W$ and $W^{\ast }$ to $g$. Thus we set up $W$ and $W^{\ast }$
by taking decompositions $E_{\varepsilon }=W_{\varepsilon }\oplus
W_{\varepsilon }^{\ast }$, with $W=W_{1}\oplus W_{-1}$, $W^{\ast
}=W_{1}^{\ast }\oplus W_{-1}^{\ast }$. Here $\dim W_{-1}=\dim W_{-1}^{\ast
}=m$, and $\dim W_{1}=\dim W_{1}^{\ast }=n-m$. We have $V^{g-1}=E_{-1}$, and 
$Q_{g}=0$. Thus in $\left[ s(g)\right] $, $\alpha =\left\vert W\cap
V^{g-1}\right\vert =\left\vert W_{-1}\right\vert =q^{m}$. On $W^{\ast }$, $%
a\rightarrow a^{g^{\ast }}$ has fixed point space $W_{1}^{\ast }$ and so $%
(q^{n}-q^{n-m})/2=q^{n-m}(q^{m}-1)/2$ two-cycles. Hence the sign of $\pi
_{g} $ is $(-1)^{(q^{m}-1)/2}$. As noted before, this is $\delta ^{m}$. Thus 
$\eta (g)=\delta ^{m}q^{-m}$. Once again, we get $\eta (-1)=\delta
^{n}q^{-n} $.
\end{proof}

\begin{corollary}
\label{CorInvOmegaMinus}For such an involution $g$,%
\begin{equation*}
\omega _{-}(g)=\frac{\delta ^{m}q^{n-m}-\delta ^{n-m}q^{m}}{2}.
\end{equation*}
\end{corollary}

\begin{proof}
Here $-g$ is also an involution with $\dim E_{-1}=2n-2m$, and Proposition %
\ref{PropOmegaMinus} applies.
\end{proof}

\begin{corollary}
\label{CorTransvection}Let $h$ be the transvection $v\rightarrow v-\gamma
^{-1}\varphi (v,c)c$, where $\gamma \neq 0$ and $c\neq 0$. Then $\eta
(h)=\rho ^{-1}\chi (\gamma )$ and $\omega (h)=q^{n}\rho ^{-1}\chi (\gamma )$.
\end{corollary}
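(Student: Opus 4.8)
The plan is to apply Theorem \ref{ThmEta} directly, after checking that the transvection $h$ can be arranged to stabilize a Lagrangian $W$. First I would note that $c$ lies in some Lagrangian $W$ (extend $c$ to a maximal isotropic subspace, which is possible since $\varphi(c,c)=0$); then for any $v$, $v^{h-1}=-\gamma^{-1}\varphi(v,c)c\in W$, so in particular $W^{h-1}\subseteq GF(q)c\subseteq W$ and hence $W^h=W$. Thus Theorem \ref{ThmEta} applies. I also recall from Proposition \ref{PropTrans} that $V^{h-1}=GF(q)c$ and $Q_h(\zeta c)=\gamma\zeta^2$, so $f(x^h,x)=\psi(Q_h(x^{h-1}))$ runs over $\psi(\gamma\zeta^2)$ as $x^{h-1}=\zeta c$ ranges over $V^{h-1}$.

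Next I would evaluate the denominator $\sum_{y=x^{h-1}\in W\cap V^{h-1}}f(x^h,x)$ in Theorem \ref{ThmEta}. Since $V^{h-1}=GF(q)c\subseteq W$, the intersection $W\cap V^{h-1}$ is all of $V^{h-1}$, so this sum is $\sum_{\zeta\in GF(q)}\psi(\gamma\zeta^2)$, which equals $\chi(\gamma)\rho$ by item 2 of Remark \ref{RemFields}. For the numerator I must compute $\chi(\det h^\ast)$, where $h^\ast$ is the map induced by $h$ on $W^\ast\cong V/W$. Since $h$ acts trivially modulo $W$ — indeed $v^h\equiv v\pmod{W}$ because $v^{h-1}\in GF(q)c\subseteq W$ — the induced map $h^\ast$ is the identity on $W^\ast$, so $\det h^\ast=1$ and $\chi(\det h^\ast)=1$. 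Therefore $\eta(h)=1/(\chi(\gamma)\rho)=\rho^{-1}\chi(\gamma)$, using $\chi(\gamma)^{-1}=\chi(\gamma)$. Then $\omega(h)=q^n\eta(h)=q^n\rho^{-1}\chi(\gamma)$ by (\ref{omegabyeta}).

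The only mild subtlety — and the step I would be most careful about — is the claim that a single transvection hyperplane direction $c$ sits inside a Lagrangian and that $h$ fixes that Lagrangian; this is what makes Theorem \ref{ThmEta} applicable at all. Once that is in place the computation is entirely routine: the denominator is a one-variable Gauss-type sum handled by Remark \ref{RemFields}, and the determinant factor is trivial because a transvection acts trivially on $V/W$ when its direction lies in $W$. I do not expect any genuine obstacle here; the corollary is essentially a bookkeeping consequence of Theorem \ref{ThmEta} and Proposition \ref{PropTrans}.
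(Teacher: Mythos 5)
Your proof is correct and follows the same route as the paper: choose a Lagrangian $W$ containing $c$, observe $h^{\ast}$ acts as the identity so the $\chi(\det h^{\ast})$ factor is $1$, reduce the denominator in Theorem \ref{ThmEta} to the Gauss sum $\sum_{\zeta}\psi(\gamma\zeta^{2})=\chi(\gamma)\rho$ via Proposition \ref{PropTrans} and Remark \ref{RemFields}, and then apply (\ref{omegabyeta}). You supply slightly more detail than the paper does, in particular spelling out why $W^{h}=W$ and why $W\cap V^{h-1}=V^{h-1}$, but the substance is the same.
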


\begin{proof}
Now choose $W$ so that $c\in W$. From Proposition \ref{PropTrans},%
\begin{equation*}
\sum_{\substack{ y\in W\cap V^{h-1}  \\ y=x^{h-1}}}f(x^{h},x)=\sum_{\beta
\in GF(q)}\psi (\beta ^{2}\gamma ).
\end{equation*}%
By Remark \ref{RemFields}, this is $\chi (\gamma )\rho $. On $W^{\ast }$, $%
h^{\ast }$ is the identity. Thus $\eta (h)=\rho ^{-1}\chi (\gamma )$.
\end{proof}

With these results in hand, we can now prove a version of the formula in 
\cite[Remark 1.3]{Th}.

\begin{theorem}
\label{ThmEtaFormula}Let $g\in \mathrm{Sp}(V)$ with theta form $\Theta _{g}$%
. Then 
\begin{equation*}
\eta (g)=\rho ^{-\dim V^{g-1}}\chi (\Theta _{g}).
\end{equation*}
\end{theorem}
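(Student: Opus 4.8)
The plan is to reduce the general case to the cases already handled (transvections and, implicitly, the behaviour under direct sums), exploiting the fact that both sides of the claimed identity are "multiplicative'' over the direct-sum decomposition of $V^{g-1}$ provided by Remark \ref{RemDirSum}. First I would recall Theorem \ref{ThmEta}, which computes $\eta(g)$ whenever some Lagrangian is $g$-invariant: there $\eta(g)=\chi(\det g^\ast)\big/\sum_{y=x^{g-1}\in W\cap V^{g-1}}f(x^g,x)$. The denominator is a sum of $\psi$ of a quadratic form (namely $Q_g$ restricted to $W\cap V^{g-1}$), so by item 2 of Remark \ref{RemFields} it is, up to sign and powers of $q$, a power of $\rho$. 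The task is to show that, after accounting for the $\chi(\det g^\ast)$ factor, everything collapses to exactly $\rho^{-\dim V^{g-1}}\chi(\Theta_g)$.

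The cleanest route is a factorization argument. By Theorem \ref{ThmCharSubspace} and Corollary \ref{CorFactor}/Remark \ref{RemDirSum}, if we can write $V^{g-1}$ as an orthogonal (with respect to $\Theta_g$) direct sum $X_1\oplus\cdots\oplus X_r$ with each $\Theta_g|X_i$ nondegenerate, then $g=g_1\cdots g_r$ with $V^{g_i-1}=X_i$, $\Theta_{g_i}=\Theta_g|X_i$, and $s(g)=s(g_1)\cdots s(g_r)$, hence $\eta(g)=\eta(g_1)\cdots\eta(g_r)$ (since $\mu=1$ at every stage and $\eta$ is a cocycle-trivializing function, so $\eta(g_1\cdots g_r)=\prod\eta(g_i)$ when all the $\mu$'s are $1$; this uses Lemma \ref{LemEta}). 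On the other side, $\dim V^{g-1}=\sum_i\dim X_i$ and $\chi(\Theta_g)=\prod_i\chi(\Theta_g|X_i)$ because $[\Theta_g]$ is block-triangular with the $[\Theta_{g_i}]$ on the diagonal, so $\det[\Theta_g]=\prod_i\det[\Theta_{g_i}]$. Therefore $\rho^{-\dim V^{g-1}}\chi(\Theta_g)=\prod_i\big(\rho^{-\dim X_i}\chi(\Theta_{g_i})\big)$, and it suffices to prove the formula for each $g_i$.

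So I would reduce to the case $\dim V^{g-1}$ small. If $\dim V^{g-1}=1$, then $g$ is a transvection (its fixed space is a hyperplane, and it is symplectic), and Corollary \ref{CorTransvection} already gives $\eta(g)=\rho^{-1}\chi(\gamma)$; one checks $\chi(\gamma)=\chi(\Theta_g)$ since $Q_g(\zeta c)=\gamma\zeta^2$, so $\Theta_g$ has matrix $(\gamma)$ up to the skew part, which does not affect the $1\times 1$ symmetric determinant — that is, $\chi(\Theta_g)=\chi(Q_g$'s discriminant$)=\chi(\gamma)$. The residual work is to show that every $g$ can be decomposed as above into pieces each of which either is a transvection or has a $g$-invariant Lagrangian complementing nicely, so that Theorem \ref{ThmEta} applies and the power of $\rho$ and the $\chi$-factor match. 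The standard fact here is that $\Theta_g$, being nondegenerate on $V^{g-1}$, admits an orthogonal decomposition into $1$- and $2$-dimensional nondegenerate pieces (orthogonal geometry of a possibly-non-symmetric nondegenerate form with fixed skew part $-\varphi/2$); the $1$-dimensional pieces are transvections, and for the $2$-dimensional pieces one verifies directly, via Theorem \ref{ThmEta} applied to a well-chosen $W$ (e.g. one containing a Lagrangian line of the corresponding $4$-dimensional ambient space), that $\eta(g_i)=\rho^{-2}\chi(\Theta_{g_i})$, the sign $\chi(\det g_i^\ast)$ being exactly what converts the $\pm\rho^{-2}$ coming from the quadratic character sum into $\rho^{-2}\chi(\det[\Theta_{g_i}])$.

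The main obstacle I anticipate is precisely this last point: controlling the interaction between the sign $\chi(\det g^\ast)$ in Theorem \ref{ThmEta}, the sign $\delta^{\dim X_i-\operatorname{rank}}$ hidden in the Gauss-sum evaluation of the denominator, and the discriminant $\chi(\Theta_g)$ — i.e., proving that these combine to give exactly $\chi(\Theta_g)$ with no stray $\delta$'s. This is a bookkeeping identity relating $\det g^\ast$ (a determinant on the quotient $W^\ast$, equivalently on $V/W$) to the discriminant of $Q_g$ on $W\cap V^{g-1}$; I would expect to resolve it by choosing $W$ adapted to $V^{g-1}$ so that $W\cap V^{g-1}$ is a Lagrangian of $(V^{g-1},\varphi)$, making $\dim(W\cap V^{g-1})=\tfrac12\dim V^{g-1}$ and $Q_g$ on it a quadratic form whose discriminant is directly comparable to $\det[\Theta_g]$, with the transvection computation of Corollary \ref{CorTransvection} serving as the base case that pins down the normalization.
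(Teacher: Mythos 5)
Your strategy is in essence the paper's: exploit the multiplicativity of both sides of the identity under the $\Theta_g$-orthogonal splittings supplied by Corollary \ref{CorFactor} and Remark \ref{RemDirSum}, then verify the formula on small pieces. The gap is exactly where you flag the ``residual work.'' You cannot avoid irreducible $2$-dimensional pieces: when $Q_g\equiv 0$ (equivalently, by Proposition \ref{PropInv}, when $g$ is an involution) there is no $c$ with $Q_g(c)\neq 0$, so no $1$-dimensional piece can be split off, and $\Theta_g=-\varphi/2$ is alternating, forcing hyperbolic planes. Your proposal never actually carries out that $2$-dimensional verification; choosing a convenient $W$ in Theorem \ref{ThmEta} is the right idea, but reconciling $\chi(\det g^{\ast})$, the sign in the Gauss-sum evaluation, and $\chi(\Theta_g)$ is precisely the content that is missing.

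The paper sidesteps this by making involutions a base case that is \emph{already done}: Corollary \ref{CorInvolution} gives $\eta(g)=\delta^{m}q^{-m}=\rho^{-2m}$ for $\dim V^{g-1}=2m$, while $\chi(\Theta_g)=\chi\bigl((-1/2)^{2m}\bigr)=1$ since $\Theta_g=-\varphi/2$; Corollary \ref{CorTransvection} handles the transvection base case with $\det\Theta_h=\gamma$ read off from Proposition \ref{PropTrans}. The induction is then simpler than your decompose-all-at-once plan: for $g$ neither an involution nor a transvection, $Q_g\not\equiv 0$ (Proposition \ref{PropInv} again), so pick $c$ with $Q_g(c)\neq 0$, set $X=\langle c\rangle$, $Y=\langle c\rangle^{\Theta_g}$, peel off the transvection on $X$ via Corollary \ref{CorFactor}, and recurse on the residual factor $k$, which has smaller $\dim V^{k-1}$ and is handled by the inductive hypothesis whether or not it is an involution. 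One more point to nail down in your writeup: $\eta(hk)=\eta(h)\eta(k)$ does follow from $\mu(h,k)=1$ via Lemma \ref{LemEta}, but it is Proposition \ref{PropDirectSum} that delivers $\mu(h,k)=1$ from $V^{h-1}\cap V^{k-1}=\{0\}$ \emph{and} the block-triangularity of $[\Theta_{hk}]$ that gives $\det\Theta_{hk}=\det\Theta_h\det\Theta_k$; both halves of the multiplicativity rest on that proposition.
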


\begin{proof}
Recall that $\chi (\Theta _{g})$ means $\chi (\det \Theta _{g})$, $\det
\Theta _{g}$ computed relative to any basis. The two examples we have so far
corroborate the formula: for an involution $g$, $V^{g-1}=V_{-1}$, and with $%
\dim V_{-1}=2m$, Corollary \ref{CorInvolution} gives $\eta (g)=\delta
^{m}q^{-m}$. As we saw in Proposition \ref{PropInv}, $Q_{g}=0$ and $\Theta
_{g}=-\varphi /2$, making $\det \Theta _{g}=(-1/2)^{2m}$ and $\chi (\Theta
_{g})=1$. Moreover, $q=\delta \rho ^{2}$ makes $\eta (g)=\delta ^{m}\delta
^{m}\rho ^{-2m}=\rho ^{-r}$, as asserted. Similarly, for $h$ the
transvection $h:v\rightarrow v-\gamma ^{-1}\varphi (v,c)c$, $V^{h-1}=GF(q)c$%
, and $\Theta _{h}(\xi c,\zeta c)=\xi \zeta \gamma $, with $\det \Theta
_{h}=\gamma $, all by Proposition \ref{PropTrans}. From Corollary \ref%
{CorTransvection}, $\eta (h)=\rho ^{-1}\chi (\gamma )$, again in agreement.

We can thus induct on $\dim V^{g-1}$. Suppose that $V^{h-1}\cap
V^{k-1}=\left\{ 0\right\} $ for $h,k\in \mathrm{Sp}(V)$, neither $h$ nor $k$
the identity. Then Proposition \ref{PropDirectSum} implies that $\det \Theta
_{hk}=\det \Theta _{h}\det \Theta _{k}$. Because $\mu (h,k)=1$, it follows
that $\eta (hk)=\eta (h)\eta (k)$, and by induction,%
\begin{equation}
\eta (hk)=\rho ^{-\dim V^{h-1}}\chi (\Theta _{h})\rho ^{-\dim V^{k-1}}\chi
(\Theta _{k}).  \label{EtaProd}
\end{equation}%
But $\dim V^{hk-1}=\dim V^{h-1}+\dim V^{k-1}$, as shown in the proof of
Proposition \ref{PropDirectSum}, and then $\eta (hk)=\rho ^{-\dim
V^{hk-1}}\chi (\Theta _{hk})$ indeed.

To obtain such a factoring, we may assume that a given $g$ is neither an
involution nor a transvection, for both of which the formula is already
correct. Thus the quadratic form $Q_{g}$ on $V^{g-1}$ is not identically 0,
by Proposition \ref{PropInv}. Take $c\in V^{g-1}$ with $Q_{g}(c)\neq 0$. Put 
$X=\left\langle c\right\rangle $ and $Y=\left\langle c\right\rangle ^{\Theta
_{g}}$; $Y\neq \left\{ 0)\right\} $. Corollary \ref{CorFactor} applies,
since $Q_{g}(c)\neq 0$ implies that $X\cap Y=\left\{ 0)\right\} $. Then $%
g=hk $, where $X=V^{h-1}$ and $Y=V^{k-1}$, and neither $h$ nor $k$ is 1. The
product result (\ref{EtaProd}) now gives the asserted formula for $g$.
\end{proof}

\section{Computations\label{SectComputations}}

\subsection{Scaling\label{SubsScaling}}

Let $\nu $ be a nonsquare in $GF(q)$ and let $\varphi ^{\prime }=\nu \varphi 
$. Construct the algebra $\mathcal{A}^{\prime }$ with the procedure used for 
$\mathcal{A}$ but with $\varphi ^{\prime }$ in place of $\varphi $. Label
corresponding ingredients for $\mathcal{A}^{\prime }$ with the dash. The
symplectic group for $\varphi ^{\prime }$ is the same as that for $\varphi $%
. But the form in Definition \ref{DefTheta} changes to $\Theta _{g}^{\prime
}=\nu \Theta _{g}$. Then $\chi (\Theta _{g}^{\prime })=(-1)^{\dim
V^{g-1}}\chi (\Theta _{g})$, and 
\begin{equation*}
\eta ^{\prime }(g)=\rho ^{-\dim v^{g-1}}\chi (\Theta _{g}^{\prime })=(-\rho
)^{\dim V^{g-1}}\chi (\Theta _{g}),
\end{equation*}
from Theorem \ref{ThmEtaFormula}. That is, $\eta ^{\prime }$ comes from $%
\eta $ by changing $\rho $ to $-\rho $. The same holds for $\omega ^{\prime
} $ from $\omega $. As pointed out when this scaling is discussed in \cite[%
Section 3]{QRC}, $-\rho $ may not be an algebraic conjugate of $\rho $.

\subsection{Permuting $V$\label{SubsPermute}}

\begin{proposition}
\label{PropPermRep}(compare \cite[Theorem 4.4(c)]{G}) The homomorphism $%
g\rightarrow t(g^{-1})^{T}\otimes t(g)\in \mathcal{M}_{q^{2n}}(K)$ is the
permutation representation of $\mathrm{Sp}(V)$ on $V$. Consequently $\omega
(g^{-1})\omega (g)=\left\vert V_{g}\right\vert $.
\end{proposition}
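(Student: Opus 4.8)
The plan is to identify the matrix realizing $g \mapsto t(g^{-1})^T \otimes t(g)$ with the permutation matrix of the action of $\mathrm{Sp}(V)$ on $V$, working in the matrix-unit basis $e_{ab}$ of $\mathcal{A} \cong \mathcal{M}_{q^n}(K)$. The key tool is the trace formula of Remark \ref{RemTrace}, namely $\mathrm{tr}[(x)] = q^n$ if $x = 0$ and $0$ otherwise. Since permutation representations are determined (up to equivalence) by the property that the character at $g$ counts fixed points, and since a homomorphism into $\mathcal{M}_{q^{2n}}(K)$ whose matrices are simultaneously $\{0,1\}$-matrices realizing a transitive-or-not action is literally a permutation representation, I would actually aim to exhibit the matrix entries directly.

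First I would recall that $s(g)^{-1}(x)s(g) = (x^g)$, so that conjugation by $s(g)$ permutes the basis $\{(x) : x \in V\}$ of $\mathcal{A}$ according to the action of $g$ on $V$. Passing to the matrix picture, the linear map $\mathfrak{a} \mapsto s(g)^{-1}\mathfrak{a}\, s(g)$ on $\mathcal{A} = \mathcal{M}_{q^n}(K)$ is given, in the basis of matrix units (equivalently, under the identification $\mathcal{M}_{q^n}(K) \cong K^{q^n} \otimes (K^{q^n})^*$), by the matrix $[s(g)]^{-T} \otimes [s(g)]$, i.e. by $[s(g^{-1})]^T \otimes [s(g)]$ up to the scalar coming from $s(g)^{-1}$ versus $s(g^{-1})$. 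The point of passing from $s$ to $t = \eta \cdot s$ is precisely that those scalars cancel: since $\mu(g,h) = \eta(gh)\eta(g)^{-1}\eta(h)^{-1}$ (Lemma \ref{LemEta}), the map $g \mapsto t(g)$ is an honest homomorphism, so $t(g)^{-1} = t(g^{-1})$, and the conjugation map $\mathfrak{a} \mapsto t(g)^{-1}\mathfrak{a}\,t(g) = s(g)^{-1}\mathfrak{a}\,s(g)$ is represented by $t(g^{-1})^T \otimes t(g)$ in $\mathcal{M}_{q^{2n}}(K)$. On the other hand, this conjugation map permutes the basis $\{(x)\}$ of $\mathcal{A}$ as the permutation $x \mapsto x^g$, so in the basis $\{(x) : x \in V\}$ its matrix is exactly the permutation matrix of $\mathrm{Sp}(V)$ acting on $V$. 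Two $\{0,1\}$-valued homomorphic matrix representations of the same group that induce the same permutation on a fixed basis are equal; hence $g \mapsto t(g^{-1})^T \otimes t(g)$ is that permutation representation (after the harmless change of basis from matrix units to the $(x)$'s, which is an inner automorphism by Remark \ref{RemNewUnits} and does not affect traces).

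The character statement then follows by taking traces: the character of $g \mapsto t(g^{-1})^T \otimes t(g)$ is $\mathrm{tr}(t(g^{-1})^T)\,\mathrm{tr}(t(g)) = \mathrm{tr}(t(g^{-1}))\,\mathrm{tr}(t(g)) = \omega(g^{-1})\omega(g)$, while the character of the permutation representation on $V$ counts fixed points, which is $|V_g| = |\{v \in V : v^g = v\}|$. Hence $\omega(g^{-1})\omega(g) = |V_g|$.

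The main obstacle I anticipate is making the tensor-product bookkeeping precise: one must be careful that conjugation $\mathfrak{a} \mapsto u^{-1}\mathfrak{a}\,u$ on $\mathcal{M}_{q^n}(K)$ corresponds, under the standard identification $\mathcal{M}_{q^n}(K) \cong K^{q^n} \otimes (K^{q^n})^*$ with $e_{ab} \leftrightarrow \varepsilon_a \otimes \varepsilon_b^*$, to the operator $(u^{-1})^T \otimes u^{-1}$ acting as $\varepsilon_a \otimes \varepsilon_b^* \mapsto (u^{-1})^T\varepsilon_a \otimes (\varepsilon_b^* u)$ — in other words, getting the transpose, the inverse, and which factor they land on exactly right, so that the final answer reads $t(g^{-1})^T \otimes t(g)$ rather than some variant. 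I would also need the elementary fact that the permutation representation on the basis $\{(x) : x \in V\}$ genuinely coincides, as a matrix representation, with the permutation representation of $\mathrm{Sp}(V)$ on $V$ itself — immediate since the basis is indexed by $V$ and the action $\sum \alpha_v (v) \mapsto \sum \alpha_v (v^g)$ of Lemma \ref{LemAlgebraFacts}(5) is exactly conjugation by $s(g)$, which was the content used above.
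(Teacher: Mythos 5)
Your proposal is correct and takes the same route as the paper, whose entire proof is the one-line observation that $t(g^{-1})(v)t(g)=(v^{g})$, i.e.\ that conjugation by $t(g)$ permutes the basis $\{(v):v\in V\}$ of $\mathcal{A}$ exactly as $g$ permutes $V$, after which the tensor-product and trace bookkeeping you spell out is routine. (One small misattribution: the change of basis from the $e_{ab}$ to the $(x)$ is not an instance of Remark \ref{RemNewUnits}, which concerns two systems of matrix units; it is simply an arbitrary $K$-basis change of $\mathcal{A}$, hence a conjugation in $\mathcal{M}_{q^{2n}}(K)$ and trace-preserving for that elementary reason.)
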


\begin{proof}
This is immediate from the fact that $t(g^{-1})(v)t(g)=(v^{g})$ for $v\in V$.
\end{proof}

This leads to a corroboration of Theorem \ref{ThmEtaFormula}: we have 
\begin{equation*}
\omega (g)=\delta ^{n}\rho ^{2n-\dim V^{g-1}}\chi (\Theta _{g})
\end{equation*}%
and 
\begin{equation*}
\omega (g^{-1})=\delta ^{n}\rho ^{2n-\dim V^{g^{-1}-1}}\chi (\Theta
_{g^{-1}}).
\end{equation*}%
Now $v^{g-1}=-v^{g(g^{-1}-1)}$ implies that $V^{g-1}=V^{g^{-1}-1}$. If $%
u,v\in V^{g-1}$ and $v=y^{g-1}=(-y^{g})^{g^{-1}-1}$, then $\Theta
_{g}(u,v)=\varphi (u,y)$ and $\Theta _{g^{-1}}(u,v)=-\varphi (u,y^{g})$.
Thus $\Theta _{g^{-1}}(u,v)=-\Theta _{g}(u,v^{g})$. Hence $\det \Theta
_{g^{-1}}=(-1)^{\dim V^{g-1}}\det (g|V^{g-1})\det \Theta _{g}$. But since $%
v^{g}\equiv v(\bmod\ V^{g-1})$ and $\det g=1$, we have $\det (g|V^{g-1})=1$.
So $\det \Theta _{g^{-1}}=(-1)^{\dim V^{g-1}}\det \Theta _{g}$. Then 
\begin{equation*}
\chi (\Theta _{g^{-1}})=\chi ((-1)^{\dim V^{g-1}})\chi (\Theta _{g})=\delta
^{\dim V^{g-1}}\chi (\Theta _{g}).
\end{equation*}%
Therefore 
\begin{eqnarray*}
\omega (g^{-1})\omega (g) &=&\delta ^{2n}(\rho ^{2})^{2n-\dim V^{g-1}}\delta
^{\dim V^{g-1}}\chi (\Theta _{g})^{2} \\
&=&(\delta q)^{2n-\dim V^{g-1}}\delta ^{\dim V^{g-1}}=q^{2n-\dim V^{g-1}} \\
&=&\left\vert V_{g}\right\vert ,
\end{eqnarray*}%
as $V_{g}=(V^{g-1})^{\perp }$.

\subsection{Embedding\label{SubsEmbed}}

From the \textquotedblleft internal\textquotedblright\ formula for $\eta $\
in Theorem \ref{ThmEtaFormula}, we can prove the following: let $V=U\oplus
U^{\prime }$, with $U^{\prime }=U^{\perp }$; $\varphi $\ is nondegenerate on 
$U$ and $U^{\prime }$. Then $\mathrm{Sp}(U)\times \mathrm{Sp}(U^{\prime })$\
embeds into $\mathrm{Sp}(V)$: if $g\in \mathrm{Sp}(U)$ and $g^{\prime }\in 
\mathrm{Sp}(U^{\prime })$, then $(u+u^{\prime })^{g\times g^{\prime
}}=u^{g}+(u^{\prime })^{g^{\prime }}$, where $u\in U$ and $u^{\prime }\in
U^{\prime }$. We have $V^{g\times g^{\prime }-1}=U^{g-1}\oplus (U^{\prime
})^{g^{\prime }-1}$. and $\Theta _{g\times g^{\prime }}(u_{1}+u_{1}^{\prime
},u_{2}+u_{2}^{\prime })=(\Theta _{U})_{g}(u_{1},u_{2})(\Theta _{U^{\prime
}})_{g^{\prime }}(u_{1}^{\prime },u_{2}^{\prime })$, the $\Theta $s on the
right taken for $U$ and $U^{\prime }$. Then from the formula, $\eta (g\times
g^{\prime })=\eta _{U}(g)\eta _{U^{\prime }}(g^{\prime })$ and $\omega
(g\times g^{\prime })=\omega _{U}(g)\omega _{U^{\prime }}(g^{\prime })$.

\subsection{Eigenvalue restrictions and semisimplicity\label{SubsSemi}}

\begin{proposition}
\label{PropEta(g-1)}If no eigenvalue of $g$ is 1, that is, $g-1$\ is
invertible, then%
\begin{equation}
\omega (g)=\delta ^{n}\chi (\det (g-1)).  \label{g-1 inv omega}
\end{equation}
\end{proposition}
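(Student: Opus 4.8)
The plan is to specialize Theorem~\ref{ThmEtaFormula} to the case where $g-1$ is invertible and compute $\chi(\Theta_g)$ in terms of $\chi(\det(g-1))$. When no eigenvalue of $g$ equals $1$, we have $V^{g-1}=V$, so $\dim V^{g-1}=2n$ and $\rho^{-\dim V^{g-1}}=\rho^{-2n}=(\delta q)^{-n}=\delta^{n}q^{-n}$ (using $\rho^2=\delta q$ and $\delta^2=1$). Combining with $\omega(g)=q^n\eta(g)$ from~(\ref{omegabyeta}) and Theorem~\ref{ThmEtaFormula} gives $\omega(g)=q^n\cdot\delta^{n}q^{-n}\chi(\Theta_g)=\delta^n\chi(\Theta_g)$. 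So the entire task reduces to showing $\chi(\Theta_g)=\chi(\det(g-1))$.

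Now I would compute $\det\Theta_g$ directly from its definition. Since $g-1$ is invertible, every element of $V$ is of the form $x^{g-1}$, and $\Theta_g(x^{g-1},y^{g-1})=\varphi(x^{g-1},y)$. The cleanest approach is to fix a basis $\{v_i\}$ of $V$ and express $\Theta_g$ in the basis $\{v_i^{g-1}\}$: the Gram matrix of $\Theta_g$ in that basis has $(i,j)$ entry $\varphi(v_i^{g-1},v_j)$. Writing $\Phi$ for the Gram matrix of $\varphi$ in the basis $\{v_i\}$ (so $\Phi_{ij}=\varphi(v_i,v_j)$) and $M$ for the matrix of $g-1$, the matrix $\bigl[\varphi(v_i^{g-1},v_j)\bigr]$ equals $M\Phi$ (up to the usual transpose conventions). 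Hence the Gram matrix of $\Theta_g$ \emph{relative to the basis $\{v_i\}$} differs from $M\Phi$, but relative to $\{v_i^{g-1}\}$ one gets an extra factor: $[\Theta_g]_{\{v_i^{g-1}\}} = M^{-1}(M\Phi)(M^{-1})^{T}=\Phi(M^{-1})^{T}$ — I would be careful here with which basis the determinant is taken in, but since $\chi(\Theta_g)$ is basis-independent by the remark following Theorem~\ref{ThmEtaFormula}, I can simply compute $\det\Theta_g$ in the basis $\{v_i^{g-1}\}$ and find it equals $\det\Phi\cdot\det(M)^{\pm1}$ up to a square. Either way $\det\Theta_g = \det\Phi\cdot\det(g-1)\cdot(\text{square})$ or its inverse times a square, and since $\det\Phi$ is itself a square (it is a skew-symmetric nondegenerate form of even rank, so $\det\Phi$ is a square in $GF(q)$ — indeed a perfect square, being a Pfaffian squared), we get $\chi(\Theta_g)=\chi(\det(g-1))$.

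The main obstacle is getting the bilinear-algebra bookkeeping exactly right: tracking the transpose in $\varphi(x^{g-1},y)$ versus $\varphi(x^{g-1},y^{g-1})$, confirming that the change of basis from $\{v_i\}$ to $\{v_i^{g-1}\}$ contributes only $\det(g-1)^2$ (a square) to $\det\Theta_g$, and verifying that $\det\Phi$ is a square so it drops out of $\chi$. A slicker route avoiding all of this: use the already-proved corroboration in Section~\ref{SubsPermute}, namely $\omega(g^{-1})\omega(g)=|V_g|=q^{2n-\dim V^{g-1}}$. When $g-1$ is invertible this says $\omega(g^{-1})\omega(g)=1$; combined with $\omega(g)=\delta^n\chi(\Theta_g)$ and the relation $\chi(\Theta_{g^{-1}})=\delta^{\dim V^{g-1}}\chi(\Theta_g)=\delta^{2n}\chi(\Theta_g)=\chi(\Theta_g)$ derived there, this is merely a consistency check and not a proof of~(\ref{g-1 inv omega}) by itself. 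So I would still need the direct computation of $\chi(\Theta_g)$; the identity $\Theta_g(x^{g-1},y^{g-1})=\varphi(x^{g-1},y)$ is the key input, and expanding $\det$ of the Gram matrix $[\varphi(v_i^{g-1},v_j)]$ as $\det[(g-1)\text{-matrix}]\cdot\det[\varphi]$ with $\det[\varphi]$ a square finishes the argument.
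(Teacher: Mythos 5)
Your proposal is correct and takes essentially the same route as the paper: reduce via Theorem~\ref{ThmEtaFormula} and $\rho^{-2n}=\delta^n q^{-n}$ to showing $\chi(\Theta_g)=\chi(\det(g-1))$, then compute the Gram matrix of $\Theta_g$ in terms of $\Phi$ and $g-1$. The paper streamlines the bookkeeping you hedged on by writing $\Theta_g(y_1,y_2)=\varphi(y_1,y_2^{(g-1)^{-1}})=y_1\Phi(g-1)^{-T}y_2^{T}$, so the Gram matrix in a symplectic basis is simply $\Phi(g-1)^{-T}$ with $\det\Phi=1$; incidentally you swapped the two bases ($M\Phi$ is the Gram matrix in $\{v_i^{g-1}\}$, while $\Phi M^{-T}$ is the one in $\{v_i\}$), but as you observed the discrepancy is $\det(g-1)^{2}$, a square, so $\chi$ is unaffected, and your remark that the identity $\omega(g^{-1})\omega(g)=|V_g|$ is only a consistency check is also right.
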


\begin{proof}
For such a $g$, $V^{g-1}=V$. With $y_{2}=x_{2}^{g-1}$, we have $\Theta
_{g}(y_{1},y_{2})=\varphi (y_{1},y_{2}^{(g-1)^{-1}})$. In matrix terms, $%
\Theta _{g}(y_{1},y_{2})=y_{1}\Phi (y_{2}(g-1)^{-1})^{T}=y_{1}\Phi
(g-1)^{-T}y_{2}^{T}$, $\Phi $\ being the matrix for $\varphi $. Thus $\det
\Theta _{g}=\det (\Phi (g-1)^{-T})=\det \Phi \det (g-1)^{-1}$. But relative
to a symplectic basis, $\det \Phi =1$. Hence $\chi (\Theta _{g})=\chi (\det
(g-1))$. So 
\begin{equation}
\eta (g)=\rho ^{-2n}\chi (\det (g-1))=\delta ^{n}q^{-n}\chi (\det (g-1)),
\label{g-1 inv eta}
\end{equation}%
from which the stated formula follows. Once again, $\eta (-1)=\delta
^{n}q^{-n}$, because $\chi (\det (-2I_{2n}))=\chi (-2)^{2n}=1$.
\end{proof}

\begin{corollary}
\label{CorOmegaMinusTrans}Let $h$ be the usual transvection $v\rightarrow
v-\gamma ^{-1}\varphi (v,c)c$, where $\gamma \neq 0$ and $c\neq 0$. Then%
\begin{equation*}
\omega _{-}(h)=\frac{q^{n}\rho ^{-1}\chi (\gamma )-1}{2}.
\end{equation*}
\end{corollary}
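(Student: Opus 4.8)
The plan is to combine the formula $\omega(h)=q^{n}\rho^{-1}\chi(\gamma)$ from Corollary \ref{CorTransvection} with the decomposition in Proposition \ref{PropOmegaMinus}, namely $\omega_{-}(h)=\tfrac{1}{2}(\omega(h)-\delta^{n}\omega(-h))$. So the only real work is to evaluate $\omega(-h)$, where $-h$ is the map $v\mapsto -v+\gamma^{-1}\varphi(v,c)c$.

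First I would observe that $-h$ has no eigenvalue equal to $1$: indeed $-h$ is a product of the involution $-1$ (whose only eigenvalue is $-1$) and the transvection $h$ (all of whose eigenvalues are $1$), and one checks directly that $v^{-h}=v$ forces $-2v+\gamma^{-1}\varphi(v,c)c=0$, whence $v\in\langle c\rangle$ and then $\varphi(c,c)=0$ gives $v=0$. Therefore Proposition \ref{PropEta(g-1)} applies and $\omega(-h)=\delta^{n}\chi(\det(-h-1))$. It remains to compute $\det(-h-1)=\det(-(h+1))=(-1)^{2n}\det(h+1)=\det(h+1)$. Writing $h+1=2\cdot 1+(h-1)$ and using that $h-1$ is the rank-one map $v\mapsto -\gamma^{-1}\varphi(v,c)c$, I would expand the determinant: on a complement of $\langle c\rangle$ inside $c^{\perp}$ the map $h+1$ acts as $2$, and a short $2\times 2$ (or rank-one perturbation) computation on the remaining plane gives $\det(h+1)=2^{2n}\cdot 1=2^{2n}$ since the trace of the rank-one part contributes nothing in the relevant block — concretely, $\det(2I+N)=2^{2n}\det(I+\tfrac12 N)=2^{2n}(1+\tfrac12\operatorname{tr}N)=2^{2n}$ because $N^2=0$ and $\operatorname{tr}N=-\gamma^{-1}\varphi(c,c)=0$. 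Hence $\chi(\det(-h-1))=\chi(2^{2n})=1$, so $\omega(-h)=\delta^{n}$.

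Plugging back in, $\omega_{-}(h)=\tfrac12(q^{n}\rho^{-1}\chi(\gamma)-\delta^{n}\cdot\delta^{n})=\tfrac12(q^{n}\rho^{-1}\chi(\gamma)-1)$, which is the asserted formula. As an alternative to the determinant computation one could instead note that $-h$ is conjugate in $\mathrm{Sp}(V)$ to the product of $-1$ with the transvection, use the embedding/product structure, or directly apply Theorem \ref{ThmEtaFormula} after computing $\Theta_{-h}$ on $V^{-h-1}=V$; but the route through Proposition \ref{PropEta(g-1)} is cleanest since $-h-1$ is visibly invertible.

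The main obstacle is the determinant evaluation $\det(h+1)=2^{2n}$: one must be careful that the rank-one correction $h-1$ really does drop out, which hinges on $\varphi(c,c)=0$ (nilpotence of $N=h-1$, or equivalently that $c$ lies in its own image). Once that is in hand everything else is bookkeeping with $\delta^{2n}=1$ and $\chi$ of a square. I would present the argument in the order: (1) $-h-1$ invertible; (2) apply Proposition \ref{PropEta(g-1)}; (3) compute $\det(-h-1)=2^{2n}$ and conclude $\omega(-h)=\delta^{n}$; (4) substitute into Proposition \ref{PropOmegaMinus} together with Corollary \ref{CorTransvection}.
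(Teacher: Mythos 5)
Your proof follows essentially the same route as the paper's: observe that $-h$ has no eigenvalue $1$, apply Proposition \ref{PropEta(g-1)} to get $\omega(-h)=\delta^{n}$, and substitute into Proposition \ref{PropOmegaMinus} together with Corollary \ref{CorTransvection}. Your determinant computation $\det(-h-1)=2^{2n}$ is in fact more careful than the paper's, which writes $\det(-h-1)=(-1)^{2n}$ (an apparent slip, since all eigenvalues of $-h-1$ are $-2$), though $\chi$ of either value is $1$ and the conclusion is unaffected.
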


\begin{proof}
All eigenvalues of $h$ are 1, so those of $-h$ are $-1$. Thus $\det
(-h-1)=(-1)^{2n}=1$, and $\omega (-h)=\delta ^{n}$. The formula follows from
Propositions \ref{PropOmegaMinus} and \ref{PropEta(g-1)}.
\end{proof}

\begin{corollary}
\label{CorSemi}If $g\in \mathrm{Sp}(V)$ is semisimple, then $\omega
(g)=\omega ^{\prime }(g)$.
\end{corollary}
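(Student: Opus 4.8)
The plan is to reduce the statement to the parity of $\dim V^{g-1}$. Recall from Subsection~\ref{SubsScaling} that $\omega'$ is obtained from $\omega$ simply by replacing $\rho$ with $-\rho$, while by Theorem~\ref{ThmEtaFormula} (combined with $\omega(g)=q^{n}\eta(g)$ and $q=\delta\rho^{2}$) one has $\omega(g)=\delta^{n}\rho^{\,2n-\dim V^{g-1}}\chi(\Theta_{g})$. Hence
\[
\omega'(g)=\delta^{n}(-\rho)^{\,2n-\dim V^{g-1}}\chi(\Theta_{g})=(-1)^{\dim V^{g-1}}\,\omega(g),
\]
so $\omega(g)=\omega'(g)$ holds precisely when $\dim V^{g-1}$ is even. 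Thus it suffices to show that $\dim V^{g-1}$ is even whenever $g$ is semisimple.

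First I would invoke the standard fact that for semisimple $g$ the $GF(q)[g]$-module $V$ splits as $V=\ker(g-1)\oplus\func{Im}(g-1)=V_{g}\oplus V^{g-1}$; in particular $V_{g}\cap V^{g-1}=\{0\}$. Now apply Corollary~\ref{CorPerp}: $V_{g}=(V^{g-1})^{\perp}$, so the radical of $\varphi$ restricted to $V^{g-1}$ equals $V^{g-1}\cap(V^{g-1})^{\perp}=V^{g-1}\cap V_{g}=\{0\}$. Thus $(V^{g-1},\varphi|_{V^{g-1}})$ is a genuine symplectic space (indeed $g$ restricts to an element of its symplectic group), and a nondegenerate alternating form can live only on an even-dimensional space. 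Hence $\dim V^{g-1}$ is even, and the reduction above finishes the proof.

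The arithmetic in the first paragraph is mechanical, and the geometry in the second is immediate from Corollary~\ref{CorPerp} once the decomposition $V=V_{g}\oplus V^{g-1}$ is in hand; so the only delicate point — and it is a mild one — is the interpretation of ``semisimple'' over $GF(q)$ (squarefree minimal polynomial, equivalently diagonalizable over the algebraic closure) and the resulting primary decomposition that isolates the fixed part. I expect that to be the main, and essentially only, obstacle. Alternatively one can avoid the $\func{Im}$-versus-$\ker$ bookkeeping entirely by arguing directly: a vector $v\in V^{g-1}$ orthogonal to all of $V^{g-1}$ lies in $(V^{g-1})^{\perp}=V_{g}$, hence is both fixed by $g$ and in $\func{Im}(g-1)$, and so vanishes by semisimplicity, again showing $\varphi|_{V^{g-1}}$ is nondegenerate.
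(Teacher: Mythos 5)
Your argument is correct, but it reaches the conclusion by a genuinely different route from the paper. You start from the formula $\omega(g)=\delta^{n}\rho^{\,2n-\dim V^{g-1}}\chi(\Theta_{g})$ (Theorem \ref{ThmEtaFormula} plus $\omega=q^{n}\eta$, $q=\delta\rho^{2}$), note that the passage to $\mathcal{A}'$ multiplies this by $(-1)^{\dim V^{g-1}}$ (since $\chi(\Theta_{g}')=(-1)^{\dim V^{g-1}}\chi(\Theta_{g})$), and then argue that semisimplicity forces $\dim V^{g-1}$ to be even via the nondegeneracy of $\varphi|_{V^{g-1}}$. The paper instead decomposes $V=E\oplus E^{\perp}$ with $E=V_{g}$, invokes the product formula from Subsection \ref{SubsEmbed}, and applies Proposition \ref{PropEta(g-1)} to $g|E^{\perp}$, landing on the explicit $\rho$-free expression $\omega(g)=q^{m}\delta^{n-m}\chi(\det(g|E^{\perp}-1))$ with $\dim E=2m$.

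The geometric input is the same in both: semisimplicity gives $V_{g}\cap V^{g-1}=\{0\}$ (so $V=V_{g}\oplus V^{g-1}$), which combined with Corollary \ref{CorPerp} ($V_{g}=(V^{g-1})^{\perp}$) says that $\varphi$ is nondegenerate on each summand. The paper uses the nondegeneracy on $E=V_{g}$ to set up the embedding; you use the nondegeneracy on $V^{g-1}$ to conclude $\dim V^{g-1}$ is even. Your route is shorter and avoids the machinery of Subsection \ref{SubsEmbed} and Proposition \ref{PropEta(g-1)} entirely; the paper's route has the side benefit of producing a closed-form expression for $\omega(g)$ on semisimple elements, which is useful in its own right. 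Both rest on the elementary module-theoretic fact you correctly flag as the one nontrivial point — that semisimplicity of $g$ yields the direct decomposition $V=\ker(g-1)\oplus\operatorname{Im}(g-1)$, which the paper itself asserts without proof in the phrase ``the semisimplicity implies that $\varphi$ is nonsingular on $E$.''
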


\begin{proof}
Here $\omega ^{\prime }$ is the Weil character for $\mathcal{A}^{\prime }$
from Subsection \ref{SubsScaling}, and the restriction on $g$ means that $g$
is diagonalizable in some extension of $GF(q)$. If $E$ is the eigenspace of $%
g$ for eigenvalue 1, then the semisimplicity implies that $\varphi $ is
nonsingular on $E$ and $g|E$ is the identity. With the decomposition $%
V=E\oplus E^{\perp }$, the results in Subsection \ref{SubsEmbed} apply, $g$
being identified with $(g|E)\times (g|E^{\perp })$. For $g|E^{\perp }$,
Proposition \ref{PropEta(g-1)} gives $\omega _{E^{\perp }}(g|E^{\perp
})=\delta ^{n-m}\chi (\det (g|E^{\perp }-1))$, where $\dim E=2m$. As $\omega
_{E}(g|E)=q^{m}$, we get $\omega (g)=q^{m}\delta ^{n-m}\chi (\det
(g|E^{\perp }-1))$. There being no $\rho $ here, we also have the same
formula for $\omega ^{\prime }(g)$, and $\omega ^{\prime }(g)=\omega (g)$.
\end{proof}

\subsection{Constructions\label{SubsConstructions}}

One way to set up $V$\ and $\varphi $\ is to take $V=GF(q^{2n})$\ and to let 
$\varphi (x,y)=\mathrm{tr}(\varepsilon xy^{q^{n}})$, where the trace is from 
$GF(q^{2n})$\ to $GF(q)$\ and $\varepsilon ^{q^{n}}=-\varepsilon $. We have 
\begin{equation*}
\varphi (y,x)=\mathrm{tr}(\varepsilon yx^{q^{n}})=\mathrm{tr}(\varepsilon
^{q^{n}}y^{q^{n}}x)=-\mathrm{tr}(\varepsilon xy^{q^{n}})=-\varphi (x,y),
\end{equation*}%
as needed, and the nondegeneracy of $\varphi $\ follows from the
nondegeneracy of the trace form itself. If $z\in V$\ and $z^{q^{n}+1}=1$,
then the multiplication $g:x\rightarrow zx$\ is a member of $Sp(V)$. With $%
z\neq 1$, Proposition \ref{PropEta(g-1)} applies. As in Lemma \ref{LemmaSign}%
, the determinant of $x\rightarrow ax$\ is the norm $a^{(q^{2n}-1)/(q-1)}$\
of $a$. Moreover, $\chi (\alpha )=\alpha ^{(q-1)/2}$\ for $\alpha \in GF(q)$%
. Thus 
\begin{eqnarray*}
\chi (\det (g-1)) &=&\left( (z-1)^{\frac{q^{2n}-1}{q-1}}\right) ^{\frac{q-1}{%
2}}=\left( (z-1)^{q^{n}-1}\right) ^{\frac{q^{n}+1}{2}} \\
&=&\left( \frac{1/z-1}{z-1}\right) ^{\frac{q^{n}+1}{2}}=\left( \frac{-1}{z}%
\right) ^{\frac{q^{n}+1}{2}} \\
&=&-(-1)^{\frac{q^{n}-1}{2}}\left( \frac{1}{z}\right) ^{\frac{q^{n}+1}{2}%
}=-\delta ^{n}z^{\frac{q^{n}+1}{2}}.
\end{eqnarray*}%
(Once again, we use $(-1)^{(q^{n}-1)/2}=\delta ^{n}$.) Now $z\rightarrow
z^{(q^{n}+1)/2}$\ is the \textquotedblleft quadratic
character\textquotedblright\ $\chi _{Z}$\ on the cyclic subgroup $Z$\ of
order $q^{n}+1$\ in $GF(q^{2n})^{\emptyset }$: $\chi _{Z}(z)=1$\ if $z$\ is
a square in $Z$, $-1$\ if not. So $\chi (\det (g-1))=-\delta ^{n}\chi
_{Z}(z) $, and $\omega (g)=\delta ^{n}\chi (\det (g-1))=-\chi _{Z}(z)$.

If $z\neq -1$, we can apply this to $g^{\prime }=-g$ and get 
\begin{eqnarray*}
\chi (\det (g^{\prime }-1)) &=&-\delta ^{n}\chi _{Z}(-z)=-\delta
^{n}(-1)^{(q^{n}+1)/2}\chi _{Z}(z) \\
&=&\delta ^{n}(-1)^{(q^{n}-1)/2}\chi _{Z}(z)=\chi _{q^{n}+1}(z).
\end{eqnarray*}%
So $\omega (-g)=\delta ^{n}\chi _{Z}(z)$. If $g$\ (and $z$) has order $o(g)$%
, then $\chi _{Z}(z)=(-1)^{(q^{n}+1)/o(g)}$. Thus for $z\neq \pm 1$,%
\begin{equation*}
\omega _{-}(g)=-\chi _{Z}(z)=(-1)^{(q^{n}+1)/o(g)}.
\end{equation*}%
by \ref{PropOmegaMinus}.

A second construction for $V$\ and $\varphi $\ takes $V=GF(q^{n})\oplus
GF(q^{n})$, with $\varphi ((x,y),(x^{\prime },y^{\prime }))=\mathrm{tr}%
(xy^{\prime }-yx^{\prime })$, the trace now from $GF(q^{n})$\ to $GF(q)$.
This time the map $g:(x,y)\rightarrow (zx,z^{-1}y)$, for $z\in
GF(q^{n})^{\emptyset }$, is in $Sp(V)$. Taking $z\neq 1$, we have 
\begin{eqnarray*}
\det (g-1) &=&(z-1)^{\frac{q^{n}-1}{q-1}}(z^{-1}-1)^{\frac{q^{n}-1}{q-1}} \\
&=&(-z^{-1})^{\frac{q^{n}-1}{q-1}}((z-1)^{2})^{\frac{q^{n}-1}{q-1}}.
\end{eqnarray*}%
Then 
\begin{eqnarray*}
\chi (\det (g-1)) &=&(-1)^{\frac{q^{n}-1}{2}}(z^{-1})^{\frac{q^{n}-1}{2}%
}(z-1)^{q^{n}-1} \\
&=&\delta ^{n}\chi _{Z}(z),
\end{eqnarray*}%
with $\chi _{Z}$\ now the quadratic character on $GF(q^{n})$. Thus $\omega
(g)=\delta ^{n}\chi (\det (g-1))=\chi _{Z}(z)$. Similarly, if $z\neq -1$ and 
$g^{\prime }=-g$, $\chi (\det (g^{\prime }-1))=\delta ^{n}\chi _{Z}(-z)=\chi
_{Z}(z)$, and $\omega (-g)=\delta ^{n}\chi _{Z}(z)$. So this time, if $z\neq
\pm 1$, $\omega _{-}(g)=\chi _{Z}(z)(1-\delta ^{n}\times \delta ^{n})/2=0$.

\subsection{Some characters of $\mathrm{SL}(2,q)$}

With the various character values in hand, we can write out the part of the
character table for $\mathrm{Sp}(2,q)(\simeq \mathrm{SL}(2,q))$ that
involves $\omega ,\omega _{-}$, and $\omega _{+}$(see the table in \cite[%
Section 38]{Do}, for example, based on work of I. Schur). Let $\left[ 
\begin{array}{cc}
0 & 1 \\ 
-1 & 0%
\end{array}%
\right] $ be the matrix for $\varphi $. The transvection $h:v\rightarrow
v-\gamma ^{-1}\varphi (v,(0,1))(0,1)$ we have been using has matrix $\left[ 
\begin{array}{cc}
1 & -\gamma ^{-1} \\ 
0 & 1%
\end{array}%
\right] $. From Corollary \ref{CorTransvection}, $\omega (h)=\delta \rho
\chi (\gamma )$, and by \ref{CorOmegaMinusTrans}, $\omega (-h)=\delta $.%
\begin{equation*}
\begin{tabular}[t]{c|cccccc}
& $1$ & $-1$ & $\left[ 
\begin{array}{cc}
1 & \alpha \\ 
0 & 1%
\end{array}%
\right] $ & $\left[ 
\begin{array}{cc}
-1 & \beta \\ 
0 & -1%
\end{array}%
\right] $ & $%
\begin{array}{c}
o(g)|(q-1) \\ 
g\neq \pm 1%
\end{array}%
$ & $%
\begin{array}{c}
o(g)|(q+1) \\ 
g\neq \pm 1%
\end{array}%
$ \\ \hline
$\omega _{-}$ & $\frac{q-1}{2}$ & $-\delta \frac{q-1}{2}$ & $\frac{\rho \chi
(\alpha )-1}{2}$ & $\frac{-\rho \chi (\beta )+\delta }{2}$ & $0$ & $%
-(-1)^{(q^{n}+1)/o(g)}$ \\ 
$\omega _{+}$ & $\frac{q+1}{2}$ & $\delta \frac{q+1}{2}$ & $\frac{\rho \chi
(\alpha )+1}{2}$ & $\frac{\rho \chi (\beta )+\delta }{2}$ & $%
(-1)^{(q^{n}-1)/o(g)}$ & $0$ \\ 
$\omega $ & $q$ & $\delta $ & $\rho \chi (\alpha )$ & $\delta $ & $%
(-1)^{(q^{n}-1)/o(g)}$ & $-(-1)^{(q^{n}+1)/o(g)}$%
\end{tabular}%
\end{equation*}

Using $\mathcal{A}^{\prime }$ from Subsection \ref{SubsScaling}, we can add
two more irreducible characters $\omega _{-}^{\prime }$ and $\omega
_{+}^{\prime }$ to this table, with the same rows as for $\omega _{-}$ and $%
\omega _{+}$ but with $-\rho $ in place of $\rho $.

\section{Factorizations\label{SectFactorizations}}

The embedding of $\mathrm{Sp}(V)$ into $\mathcal{A}$ can be used to study
factorizations of group elements. The best known is the fact that every
member of $\mathrm{Sp}(V)$ is a product of transvections. Initially one
shows that the transvections generate $\mathrm{Sp}(V)$ without paying
attention to how many transvections are needed in a product for a given
group member \cite[Theorem 8.5]{T}. That number was examined by Dieudonn\'{e}
\cite{D}, and we shall deal with it in this section.

\begin{theorem}
\label{ThmTransProd}\emph{\cite[Theorem 2]{D}} If $g\in \mathrm{Sp}(V)$ and $%
g$ is not an involution, then $g$ is the product of $\dim V^{g-1}$
transvections. If $g$ is an involution, then $g$ is the product of $\dim
V^{g-1}+1$ transvections. As usual, $1$ is declared to be an empty product.
\end{theorem}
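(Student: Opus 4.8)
The plan is to pass, via the theta form of Section~\ref{SectSpinA}, from the statement about products in $\mathrm{Sp}(V)$ to one about bilinear forms, and then to induct on $r:=\dim V^{g-1}$. First I would set up the dictionary. By Proposition~\ref{PropTrans} a transvection $h$ has $\dim V^{h-1}=1$, with $\Theta_{h}$ equal to the nonzero scalar form $(\xi c,\zeta c)\mapsto\xi\zeta\gamma$ on $V^{h-1}=GF(q)c$; conversely, if $\dim V^{g-1}=1$ then $g$ is a transvection, since $v^{g}-v\in GF(q)c$ together with $g\in\mathrm{Sp}(V)$ forces $v^{g}-v$ to be a scalar multiple of $\varphi(v,c)c$. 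Combining Corollary~\ref{CorFactor} with Remark~\ref{RemDirSum}, one gets: $g$ is a product of exactly $r$ transvections (necessarily with linearly independent directions, since their residual lines then sum directly to $V^{g-1}$) if and only if $V^{g-1}$ has a basis $c_{1},\dots,c_{r}$ in which the matrix of $\Theta_{g}$ is lower triangular with nonzero diagonal entries; call such a basis a triangulating basis. Also, by Proposition~\ref{PropInv}, $g$ is an involution exactly when $Q_{g}=0$, equivalently (as $\mathrm{char}\,GF(q)\neq2$) exactly when $\Theta_{g}$ is alternating, and then $r=\dim E_{-1}$ is even and $\varphi$ is nondegenerate on $V^{g-1}$.

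Next I would treat the case that $g$ is not an involution, by induction on $r$, the cases $r=0$ ($g=1$, empty product) and $r=1$ ($g$ a transvection) being immediate. For $r\geq2$ the goal is to write $g=hk$ with $h$ a transvection and $k$ again a non-involution with $\dim V^{k-1}=r-1$; induction then finishes. One chooses $c\in V^{g-1}$ with $Q_{g}(c)\neq0$ (possible since $Q_{g}\neq0$) and sets $Y=\{v\in V^{g-1}:\Theta_{g}(c,v)=0\}$; since $\Theta_{g}(c,c)=Q_{g}(c)\neq0$ we have $c\notin Y$, $V^{g-1}=GF(q)c\oplus Y$, $\Theta_{g}(GF(q)c,Y)=0$, and $\dim Y=r-1$, so Corollary~\ref{CorFactor} produces $g=hk$ with $V^{h-1}=GF(q)c$ (hence $h$ a transvection), $V^{k-1}=Y$, $\Theta_{k}=\Theta_{g}|Y$. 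The only thing to arrange is that $c$ be chosen so that $\Theta_{g}|Y$ is not alternating, i.e.\ $Q_{g}|Y\neq0$, so that $k$ is not an involution. For $r=2$ this is automatic: $Y$ is a line and $\Theta_{g}|Y$ is nondegenerate (Corollary~\ref{CorFactor}), hence nonzero on a generator. For $r\geq3$ this is the hard part, and I would argue by contradiction: if every anisotropic $c$ were bad, then every hyperplane $c^{\Theta_{g}}$ would lie in the quadric $\{Q_{g}=0\}$; since $\Theta_{g}$ is nondegenerate the map $c\mapsto\Theta_{g}(c,-)$ is injective and the anisotropic vectors of the nonzero form $Q_{g}$ span $V^{g-1}$ (an easy count for $q$ odd, $r\geq2$), so the hyperplanes $c^{\Theta_{g}}$ would have trivial common intersection and at least two would be distinct; but two distinct hyperplanes inside $\{Q_{g}=0\}$ force that set to have at least $2q^{r-1}-q^{r-2}$ points, which by the classical bound on point counts for quadratic forms forces $Q_{g}$ to split as a product of two independent linear forms, whence the only hyperplanes inside the quadric are the two coordinate hyperplanes of that splitting; these meet in a subspace of dimension $r-2>0$, contradicting the trivial common intersection. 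So a peelable transvection always exists and the induction goes through.

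Finally I would deduce the involution case with one extra transvection. Let $\dim E_{-1}=2m$, pick any nonzero $c\in E_{-1}$, and let $h:v\mapsto v-\gamma^{-1}\varphi(v,c)c$ be a transvection in direction $c$. A direct computation with $V=E_{1}\oplus E_{-1}$ (using $V^{g-1}=E_{-1}$, $V_{g}=E_{1}=E_{-1}^{\perp}$ by Corollary~\ref{CorPerp}, and $v^{g}=v_{1}-v_{-1}$) shows that $gh-1$ maps $V$ into $E_{-1}$ and is injective, hence bijective, on $E_{-1}$, so $V^{(gh)-1}=E_{-1}$ has dimension $2m$; and that $Q_{gh}((gh-1)v)=-\gamma^{-1}\varphi(v,c)^{2}$ for $v\in E_{-1}$, which is not identically zero, so $gh$ is not an involution. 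By the non-involution case $gh$ is a product of $2m$ transvections, so $g=(gh)h^{-1}$ is a product of $2m+1=\dim V^{g-1}+1$ transvections. (That the count cannot be improved—so that the extra transvection for an involution is genuinely needed—follows from the dictionary of the first paragraph: a product of $r$ transvections forces a triangulating basis, hence a non-alternating $\Theta_{g}$; but only the existence statements are needed here.) The main obstacle throughout is the counting argument of the second paragraph that guarantees a peelable transvection; the rest is direct computation or an appeal to Section~\ref{SectSpinA}.
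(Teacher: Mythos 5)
Your proof is correct and takes the same overall route as the paper: pass to the theta-form picture, peel off a transvection from a non-involution $g$ by choosing an anisotropic $c\in V^{g-1}$ with $Q_{g}$ also nonzero on $Y=\langle c\rangle^{\Theta_{g}}$ and applying Corollary~\ref{CorFactor}, then handle an involution by composing with one transvection to reach a non-involution of the same residual dimension. The differences are local. Where the paper guarantees a usable $c$ by arguing that $V^{g-1}$ has at most two $Q_{g}$-totally-singular hyperplanes (via the radical quotient) and asserting there are at least three anisotropic lines, you treat $r=2$ separately (automatic, since $\Theta_{g}|Y$ is nondegenerate by Corollary~\ref{CorFactor} and hence $Q_{g}(y)=\Theta_{g}(y,y)\neq 0$ for a generator $y$ of the line $Y$) and for $r\geq 3$ run a self-contained point count on the quadric $\{Q_{g}=0\}$. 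Your version is longer but happens to sidestep a small inaccuracy in the paper: for $q=3$, $\dim V^{g-1}=2$, and $Q_{g}$ a hyperbolic plane there are only two anisotropic lines, so the paper's ``at least three'' does not hold literally, although its conclusion still does, for exactly the reason you give in the $r=2$ case. In the involution step, where the paper observes $gh=hg$ so $(gh^{-1})^{2}=h^{-2}\neq 1$, you instead compute $V^{gh-1}$ and $Q_{gh}$ directly and reach the same factorization. One presentational point worth tightening: the theorem is really about the \emph{minimal} number of transvections, so the lower bound $\dim V^{g-1}\leq r$ and the extra ``$+1$'' for involutions deserve to be stated head-on (as the paper does) rather than relegated to the final parenthetical, though the substance is already present in your first paragraph.
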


\begin{proof}
If $g=g_{1}\cdots g_{r}$ with transvections $g_{i}$, then $V^{g-1}\subseteq
V^{g_{1}-1}+\cdots +V^{g_{r}-1}$. Since $\dim V^{g_{i}-1}=1$, by Proposition %
\ref{PropTrans}, $\dim V^{g-1}\leq r$; so there must be at least $\dim
V^{g-1}$ transvections in the product. Suppose that $g$ is not an
involution. Then $Q_{g}$ is not identically 0, by Proposition \ref{PropInv}.
When $\dim V^{g-1}=1$, $g$ is already a transvection. Thus we can assume
that $\dim V^{g-1}>1$. If there is $c\in V^{g-1}$ for which $Q_{g}(c)=\gamma
\neq 0$ and for which $Q_{g}$ is not identically 0 on $Y=\left\langle
c\right\rangle ^{\Theta _{g}}$, then with $X=GF(q)c$ in Corollary \ref%
{CorFactor}, $g=hk$, where $h$ is the transvection $v\rightarrow v-\gamma
^{-1}\varphi (v,c)c$ and $Y=V^{k-1}$. Here $k$ is also not an involution
since $Q_{k}=Q_{g}|Y$ is not identically 0. Induction then implies that $k$
is the product of $\dim Y=\dim V^{g-1}-1$ transvections, making $g$ the
product of $\dim V^{g-1}$ transvections.

For the needed $c$, note that the map $\left\langle c\right\rangle
\rightarrow \left\langle c\right\rangle ^{\Theta _{g}}$ is one-to-one
between lines and hyperplanes of $V^{g-1}$, because $\Theta _{g}$ is
nondegenerate. Suppose that there were three different hyperplanes in $%
V^{g-1}$ that are totally singular for $Q_{g}$. Then the intersection of any
two of them must be the radical, $\mathrm{rad}Q_{g}$, of $Q_{g}$. The
induced quadratic form on the two-dimensional space $V^{g-1}/\mathrm{rad}%
Q_{g}$ would then have at least three singular lines. However, nonsingular
planes have either none or two \cite[pp. 138--139]{T}. Thus in general, $%
V^{g-1}$ has at most two $Q_{g}$-totally singular hyperplanes. But since $%
\dim V^{g-1}>1$, there are at least three lines $\left\langle c\right\rangle 
$ with $Q_{g}(c)\neq 0$, as is easy to see. Therefore we can find the
required $c$ with $Q_{g}(c)\neq 0$ and $Q_{g}$ not identically 0 on $%
\left\langle c\right\rangle ^{\Theta _{g}}$.

Now let $g$ be an involution, with $\dim V^{g-1}=2l$. It cannot be that $%
g=g_{1}\cdots g_{2l}$ for transvections $g_{i}$. For if so, it must be that $%
V^{g-1}=V^{g_{1}-1}\oplus \cdots \oplus V^{g_{2l}-1}$, by the dimensions. If
we set $h=g_{1}$ and $k=g_{2}\cdots g_{2l}$ in Remark \ref{RemDirSum}, then $%
\Theta _{g}|V^{g_{1}-1}=\Theta _{g_{1}}$. But the left is 0, since $Q_{g}=0$
and $\Theta _{g}=-\varphi /2$ on $V^{g-1}$. Yet the right is not 0, by
Proposition \ref{PropTrans}. Thus a transvection product for $g$ must have
more than $2l$ factors.

If $c\in V^{g-1}$, $c\neq 0$, then $c^{g}=-c$. Let $h$ be the transvection $%
v\rightarrow $. Then $gh=hg$: for the successive images of $v$ by $gh$ are $%
v\rightarrow v^{g}\rightarrow v^{g}-\alpha \varphi (v^{g},c)c$; and by $hg$
are $v\rightarrow v-\alpha \varphi (v,c)c\rightarrow v^{g}+\alpha \varphi
(v,c)c$. But this last is indeed $v^{g}+\alpha \varphi
(v^{g},c^{g})c=v^{g}-\alpha \varphi (v^{g},c)c$. Thus $(gh^{-1})^{2}=h^{-2}$
and $gh^{-1}$ is not an involution. So it is a product $g_{1}\cdots g_{r}$
of transvections, with$r\leq \dim V^{g-1}$, since $V^{gh-1}\subseteq V^{g-1}$%
. Then $g=hg_{1}\cdots g_{r}$, and now it must be that $r=\dim V^{g-1}=2l$.
Notice the freedom of choice for $h$.
\end{proof}



\end{document}